\definecolor{DarkRed}{rgb}{0.9,0.,.2}
\definecolor{DarkGreen}{rgb}{0.1,0.7,0.1}
\definecolor{DarkBlue}{rgb}{0.,0.,.6}
\definecolor{grey}{rgb}{0.7,0.7,0.7}
\newcommand{\ignore}[1]{}
\theoremstyle{plain}
\newtheorem{theorem}{Theorem}
\newtheorem{proposition}[theorem]{Proposition}
\newtheorem{corollary}[theorem]{Corollary}
\newtheorem{lemma}[theorem]{Lemma}
\newtheoremstyle{theoremwithref}{}{}{\itshape}{}{\bfseries}{.}{.5em}{#1 #2 #3}
\theoremstyle{theoremwithref}
\theoremstyle{definition}
\newtheorem{remark}[theorem]{Remark}
\numberwithin{theorem}{section}
\numberwithin{equation}{section}
\newcommand{\bs}{\backslash}
\newcommand{\abs}[1]{\left|#1\right|}
\newcommand{\Abs}[1]{\left|\left|#1\right|\right|}
\newcommand{\ZZ}{\mathbb{Z}}
\newcommand{\QQ}{\mathbb{Q}}
\newcommand{\RR}{\mathbb{R}}
\newcommand{\HH}{\mathbb{H}}
\newcommand{\PP}{\mathbb{P}}
\newcommand{\RP}{\mathbb{RP}}
\newcommand{\varep}{\varepsilon}
\newcommand{\SL}{\mathrm{SL}}
\newcommand{\GL}{\mathrm{GL}}
\newcommand{\PO}{\mathrm{PO}}
\newcommand{\PSO}{\mathrm{PSO}}
\newcommand{\PSL}{\mathrm{PSL}}
\newcommand{\PGL}{\mathrm{PGL}}
\renewcommand{\so}{\mathfrak{so}}
\newcommand{\A}{\mathcal{A}}
\newcommand{\C}{\mathcal{C}}
\newcommand{\parlie}{\mathfrak{p}}
\newcommand*{\merde}[1]{\textcolor{black}{#1}}
\newcommand*{\blue}[1]{\textcolor{blue}{#1}}
\newcommand{\N}{\mathbb{N}}
\newcommand{\R}{\mathbb{R}}
\newcommand{\Hb}{\mathbb{H}}
\newcommand{\Sb}{\mathbb{S}}
\newcommand{\Pb}{\mathbb{P}}
\newcommand{\Pd}{\mathbb{P}^d}
\newcommand{\G}{\Gamma}
\newcommand{\g}{\gamma}
\renewcommand{\O}{\Omega}
\newcommand{\Cc}{\mathcal{C}}
\newcommand{\Hc}{\mathcal{H}}
\renewcommand{\Mc}{\mathcal{M}}
\renewcommand{\Hc}{\mathcal{H}}
\newcommand{\Bc}{\mathcal{B}}
\newcommand{\Pc}{\mathcal{P}}
\newcommand{\Quo}{\Omega/\Gamma}
\newcommand*{\Quotient}[2]{#1/#2}
\newcommand{\Aff}{\textrm{Aff}}
\newcommand{\Isom}{\textrm{Isom}}
\newcommand{\dO}{\partial \Omega}
\renewcommand{\leq}{\leqslant}
\renewcommand{\geq}{\geqslant}
\title[bending hyperbolic manifolds]{Properly Convex Bending of Hyperbolic Manifolds}
\author{Samuel A. Ballas}
\address{Department of Mathematics, Florida State University, Tallahassee, FL, USA}
\email{ballas@math.fsu.edu}
\author{Ludovic Marquis}
\address{Univ Rennes, CNRS, IRMAR - UMR 6625, F-35000 Rennes, France}
\email{ludovic.marquis@univ-rennes1.fr}
\begin{document}

\begin{abstract}
In this paper we show that bending a finite volume hyperbolic $d$-manifold $M$ along a totally geodesic hypersurface $\Sigma$ results in a properly convex projective structure on $M$ with finite volume. We also discuss various geometric properties of bent manifolds and algebraic properties of their fundamental groups.  We then use this result to show in each dimension $d\geqslant 3$ there are examples finite volume, but non-compact, properly convex $d$-manifolds. Furthermore, we show that the examples can be chosen to be either strictly convex or non-strictly convex. 

\end{abstract}

\maketitle
\tableofcontents

\section{Introduction}

Let $\RP^d$ denote $d$-dimensional real projective space and $\PGL_{d+1}(\R)$ denote the projective general linear group. A subset $\O$ of $\RP^d$ is called \emph{properly convex} if its closure is a convex set that is disjoint from some projective hyperplane. A properly convex set $\O$ is called \emph{strictly convex} if $\partial \O$ contains no non-trivial line segments. 

Given two properly convex domains $\O_1$ and $\O_2$ it is possible to construct a new properly convex domain $\O_1\otimes \O_2$ via an obvious product construction. A properly convex domain $\O$ is called \emph{irreducible} if the only way $\O$ can be written as such a product is if one of the factors is trivial. 

To each properly convex $\O$ we can associate an \emph{automorphism group} 
$$\PGL(\O)=\{A\in \PGL_{d+1}(\R)\mid A(\O)=\O\}$$
 and we say that $\O$ is \emph{homogeneous} if $\PGL(\O)$ acts transitively on $\O$. The Klein model, $\HH^d$, of hyperbolic space provides a quintessential example of a homogenous, irreducible, strictly convex domain, with automorphism group equal to the group, $\Isom(\HH^d)$ of hyperbolic isometries.   

 If $\G\subset \PGL(\O)$ is a discrete group then $\O/\G$ is a \emph{properly convex orbifold}. The domain $\O$ admits a $\PGL(\O)$-invariant metric, called the Hilbert metric, which gives rise to a $\PGL(\O)$-invariant measure and so it makes sense to ask if $\O/\G$ has finite volume. A domain $\O$ is called \emph{divisible} (resp.\ \emph{quasi-divisible}) if there is a discrete group $\G\subset \PGL(\O)$ such that $\O/\G$ is compact (resp.\ finite volume). In this case, the group $\G$ is said to \emph{divide} (resp.\ \emph{quasi-divide}) $\O$. The group $\Isom(\HH^d)$ is well known to contain both uniform and non-uniform lattices and as such we see that hyperbolic space is both divisible and quasi-divisible. 

In this context, there are several natural questions concerning the existence of divisible and quasi-divisible convex sets in each dimension. The main result of this paper is the following, which answer one such question in the affirmative:

\begin{theorem}\label{maintheorem}
For each $d\geq 3$ there exists an irreducible, non-homogenous, quasi-divisible $\O\subset \RP^d$. Furthermore, the domain $\O$ can be chosen to be either strictly convex or non-strictly convex.
\end{theorem}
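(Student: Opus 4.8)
The plan is to deduce Theorem~\ref{maintheorem} from the bending construction advertised in the abstract, so the proof will be a matter of assembling ingredients rather than a fresh argument. First I would fix, in each dimension $d\geq 3$, a finite-volume hyperbolic $d$-manifold $M$ that is non-compact and contains an embedded (or at least immersed, separating or non-separating) totally geodesic hypersurface $\Sigma$; the existence of such $M$ (for instance arithmetic manifolds of simple type, where totally geodesic hypersurfaces are plentiful) is classical and can be invoked as a known input. Applying the main bending theorem of the paper to the pair $(M,\Sigma)$ produces a properly convex projective structure on $M$ of finite volume, hence a properly convex domain $\O\subset\RP^d$ together with a non-uniform lattice $\G\cong\pi_1(M)$ in $\PGL(\O)$ acting on it; this already shows $\O$ is quasi-divisible and non-compactly so.

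Next I would check the three adjectives. For \emph{irreducibility}: the holonomy of the bent structure contains the holonomy of the original hyperbolic structure of $M$ restricted to $\pi_1(\Sigma)$ (the bending is supported away from $\Sigma$), and since $\pi_1(\Sigma)$ is a non-elementary lattice in $\SO$, its image is strongly irreducible in $\PGL_{d+1}(\R)$; a reducible $\O$ would force the holonomy to preserve a proper projective subspace, contradicting this. For \emph{non-homogeneity}: a homogeneous properly convex domain quasi-divided by a lattice would, by the classification of homogeneous convex domains together with rigidity, have to be (projectively equivalent to) $\HH^d$ with $\G$ a hyperbolic lattice — but for a non-trivial bending the holonomy is genuinely a deformation of the hyperbolic holonomy (its restriction to $\pi_1$ of a curve crossing $\Sigma$ is conjugated by the bending cocycle and leaves $\SO$), so $\O$ cannot be a round ball; alternatively one cites that the bent holonomy is not conjugate into $\SO$, which rules out homogeneity directly. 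This is the step I expect to be the main obstacle: one must verify that the bending is nontrivial as a projective structure (not merely as a representation) and phrase the homogeneity obstruction cleanly, presumably by noting that a non-round homogeneous domain cannot be strictly convex while its automorphism group contains a Zariski-dense-in-$\PGL_{d+1}$ subgroup, or by invoking Koszul/Benz\'ecri-type results quoted earlier.

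Finally, for the strictly convex versus non-strictly convex dichotomy I would use the geometric analysis of bent manifolds from the body of the paper: bending along a hypersurface whose complementary pieces meet every cusp cross-section in a way that keeps the cusp groups \enquote{irreducible} (no invariant hyperplane) yields a strictly convex $\O$, while bending in a configuration that leaves some cusp holonomy conjugate into a parabolic subgroup fixing a hyperplane — equivalently, where a cusp cross-section is disjoint from $\Sigma$ — produces properly but non-strictly convex $\O$, because the corresponding cusp contributes a flat (a properly embedded simplex or a segment) in $\partial\O$. Concretely, I would exhibit two explicit arithmetic examples in each dimension: one where $\Sigma$ is \enquote{large enough} to meet all cusps (giving the strictly convex case, as in the known strictly convex cusped examples) and one where it misses a cusp (giving the non-strictly convex case). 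Assembling these two families completes the proof of all clauses of the theorem.
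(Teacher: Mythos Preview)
Your overall strategy --- assemble the bending construction, check irreducibility, non-homogeneity, and then split into strictly and non-strictly convex cases --- matches the paper's. However two of the concrete steps are wrong, and one is vaguer than what the paper actually does.

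\textbf{Irreducibility.} You propose to use that $\pi_1(\Sigma)$ is strongly irreducible in $\PGL_{d+1}(\R)$. It is not. The hypersurface $\Sigma$ is totally geodesic, so (in the paraboloid model) $\rho_t(\pi_1(\Sigma))=\rho_0(\pi_1(\Sigma))\subset\PSO(Q_d;d-1,1)$ preserves both the projective hyperplane $\{x_2=0\}$ containing $\HH^{d-1}_0$ and the point $[e_2]$; this is exactly why the centralizer $C_{d-1}$ used to define bending is nontrivial. The paper instead uses the fundamental group of a component $M_\Sigma$ of $M\smallsetminus\Sigma$: the restriction $\rho_t|_{\pi_1(M_\Sigma)}$ differs from $\rho_0|_{\pi_1(M_\Sigma)}$ only by a global conjugation, and $\rho_0(\pi_1(M_\Sigma))$ is a Zariski-dense (hence strongly irreducible) subgroup of $\PSO(Q_d)$ because its limit set is all of $\partial\HH^d$. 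That is the subgroup you should invoke.

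\textbf{Strict versus non-strict convexity.} You have the criterion inverted. A cusp cross section $T$ that is \emph{disjoint} from $\Sigma$ has $\rho_t(\Gamma_\infty)=\rho_0(\Gamma_\infty)$, so it stays a standard cusp and contributes no segment to $\partial\O_t$. What produces a bent cusp (and hence a segment in $\partial\O_t$) is a cusp $T$ that $\Sigma$ \emph{does} meet, and meets homologically nontrivially: the paper shows (Theorem~\ref{homologydeterminesbending}) that the cusp becomes bent iff $[(T\cap\Sigma)^\ast]\neq 0$ in $H_{d-2}(T^\ast;\ZZ)$. Consequently the strictly convex examples come from taking $\Sigma$ \emph{separating} (Corollary~\ref{c:strictconvexbending}), while the non-strictly convex examples require $\Sigma$ non-separating and arranged so that some cusp meets $\Sigma$ in a single component (Theorem~\ref{examples}). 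Your proposed mechanism --- a flat in $\partial\O$ coming from a cusp missed by $\Sigma$ --- does not occur.

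\textbf{Non-homogeneity.} Your sketch (``homogeneous forces $\O\cong\HH^d$, but the bent holonomy leaves $\PSO(Q_d)$'') is in the right spirit but would need to rule out the non-strictly-convex homogeneous cones as well. The paper gives a cleaner argument: it first proves $\Gamma_t$ is Zariski dense in $\PGL_{d+1}(\R)$ for $t\neq 0$ (Theorem~\ref{zariskidense}, via a Benoist-type lemma applied to the Zariski closure of a peripheral subgroup), and then observes that a Zariski-dense subgroup of a simple Lie group is either discrete or dense; since $\PGL(\O_t)$ preserves $\O_t$ it cannot be dense, so it is discrete, and a discrete automorphism group cannot act transitively.
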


There is a similar result, initially observed by Benoist \cite{CD1}, which shows that by combining work of Johnson--Millson \cite{JoMi} and Koszul \cite{Ko},  one can construct for all $d\geq 2$ irreducible, non-homogeneous, divisible properly convex $\O\subset \RP^d$ . However, in these examples the groups $\G$ are Gromov-hyperbolic which forces $\O$ to be strictly convex \cite{CD1}. 

The study of lattices in semisimple Lie groups provides natural context and motivation for the study of divisible and quasi-divisible convex sets. A discrete subgroup $\G$ of a semisimple Lie group $G$ is a \emph{lattice} (resp.\ \emph{uniform lattice}) if the quotient $G/\G$ has finite Haar measure (resp.\ is compact). Lattices play a role in many disparate areas of mathematics including geometric structures on manifolds, algebraic groups, and number theory, to name a few. 

Historically, much work has been dedicated to constructing and understanding lattices in semisimple Lie groups. In the late 1880's, Poincar\'e \cite{poincare} developed a technique for constructing lattices in $\SL_2(\R)$. His method is geometric and involves constructing tilings of the hyperbolic plane $\HH^2$ using isometric copies of a finite volume tile. The hyperbolic plane can be realized as the quotient of $\SL_2(\R)$ by the compact group $\textrm{SO}_2(\R)$ and the isometry group of the tiling is the desired lattice. Poincar\'e's tiling techniques were subsequently generalized by himself and others to construct concrete examples of lattices in various ``low dimensional'' Lie groups. However, explicitly constructing the required tilings in high dimensional spaces turns out to be difficult. 

It was not until 60 years later that Borel and Harish-Chandra \cite{BorelHarishChandra} developed a general technique for constructing explicit lattices in semisimple Lie groups using ``arithmetic techniques.'' Roughly speaking they showed that a semisimple Lie group $G$ could be realized as a subgroup of matrices whose entries satisfied certain integral polynomial constraints and that the subgroup $\G$ consisting of elements of $G$ with integer entries is a lattice.  In the following decade Margulis proved his seminal ``super-rigidity'' and ``arithmeticity'' results. One consequence of his work is that for most semisimple Lie groups, all of its lattices arise (up to finite index subgroups) via the previously mentioned arithmetic construction. 

As alluded to in the description of Poincar\'e's techniques, there is a strong connection between lattices and geometry. Given a Lie group $G$ we can form the associated symmetric space $G/K$, where $K$ is a maximal compact subgroup of $G$. The group $G$ acts on $X$ by isometries and if $\G$ is lattice in $G$ then $X/\G$ is a finite volume orbifold. However, because of super-rigidity, the geometry of these manifolds is typically quite rigid and does not admit deformations. 

On the other hand, the situation for properly convex manifolds (and orbifolds) is similar, but as we shall see, much more flexible. Suppose we are given a divisible (or quasi-divisible) properly convex domain $\O$ and a group $\G$ dividing (resp.\ quasi-dividing) $\O$. In this situation, we can think of $\O$ as being an analogue of the symmetric space $G/K$ and $\Gamma$ as an analogue of a lattice in $G$. In this setting there is a $\PGL(\O)$-invariant metric on $\O$ and so we can regard  $\O/\G$ as a metric object. However, despite this compelling analogy the deformation theories of lattices in semisimple Lie groups and properly convex projective manifolds have very distinct flavors. This is primarily a result of the fact that  the  group $\G$ (quasi-)dividing $\O$ is only a discrete subgroup of $\PGL_{d+1}(\R)$ and not, in general, a lattice in any Lie subgroup of $\PGL_{d+1}(\R)$. Thus $\G$ is typically not forced to satisfy super rigidity. As a result, much recent work has been focused on producing and 
understanding deformations of such manifold \cite{Goldmanthesis,Gconv,cgorb,baby_fock,marquis_moduli_surf,CLT_c,CLT_flexing,ecima,CL15,deform_sam,8knot_sam} or the survey \cite{CLM_survey}.

In fact, the proof of Theorem \ref{maintheorem} relies on a deformation theoretic argument, which we briefly outline. We start with a finite volume hyperbolic $d$-manifold $M$ that contains an embedded finite volume totally geodesic hypersurface $\Sigma$. We can realize $\HH^d$ as a strictly convex subset of $\RP^d$ and thus we can realize $M$ as $\HH^d/\G$ where $\G$ is a discrete subgroup of $\PSO(d,1)\subset \PGL_{d+1}(\R)$. Using the bending construction of Johnson and Millson \cite{JoMi} we can produce a family $\G_t\subset \PGL_{d+1}(\R)$ of subgroups such that $\G_0=\G$. We can then apply arguments of \cite{Mar} to conclude that for each $t$ the group $\G_t$ preserves a properly convex domain $\O_t$. Finally, a detailed analysis of the geometry of the cusps of $\O_t/\G_t$ allows us to conclude that $\G_t$ quasi-divides $\O_t$ and can be either strictly convex or non-strictly convex (for different choices of $M$ and $\Sigma$). 

\begin{remark}\label{r:error}
The paper \cite{Mar} by the second author contains a Theorem (Prop 6.9), a corollary of which is that the above bending construction always results in \emph{strictly convex} projective manifolds. However, the proof of this theorem contains a gap and the results of this paper show that there are non-strictly convex manifolds obtained by bending, and so Prop 6.9 of \cite{Mar} is actually false. 
\end{remark}

In the process of proving Theorem \ref{maintheorem} we are able to prove the following result.

\begin{theorem}\label{zariskidense}
The groups $\G_t$ obtained by bending $M$ along $\Sigma$ are Zariski dense for $t\neq0$. 
\end{theorem}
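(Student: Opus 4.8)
The plan is to show that the Zariski closure $\overline{\G_t}^{\mathrm{Zar}}\subset\PGL_{d+1}(\R)$ contains two \emph{distinct} conjugates of $H:=\PSO(d,1)$ which are related by the bending cocycle, and that any such pair already generates (in the Lie-algebra sense) all of $\mathfrak{sl}_{d+1}(\R)$; since $\PGL_{d+1}$ is Zariski connected this forces $\overline{\G_t}^{\mathrm{Zar}}=\PGL_{d+1}(\R)$. Recall the set-up: writing $\G=\pi_1(M)$ and $\Delta=\pi_1(\Sigma)$, the hypersurface $\Sigma$ gives a splitting of $\G$, an amalgam $\G=\G_1\ast_\Delta\G_2$ if $\Sigma$ separates and an HNN extension $\G=\G'\ast_\Delta$ with stable letter $s$ otherwise; the bending uses the one–parameter group $\{b_t\}_{t\in\R}\subset\PGL_{d+1}(\R)$ fixing pointwise the projective hyperplane spanned by $\widetilde\Sigma$ and fixing its polar point, which is exactly (the identity component of) the centraliser of the copy $L\cong\PSO(d-1,1)$ stabilising $\widetilde\Sigma$. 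The deformed holonomy $\rho_t$ agrees with $\rho_0$ on the vertex group(s) and sends $s\mapsto\rho_0(s)b_t$.

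\emph{Step 1 (the two copies of $H$).} From the description of $\rho_t$ one gets, in the separating case, $\overline{\G_t}^{\mathrm{Zar}}\supseteq\big\langle\,\overline{\rho_0(\G_1)}^{\mathrm{Zar}},\ b_t\,\overline{\rho_0(\G_2)}^{\mathrm{Zar}}\,b_t^{-1}\big\rangle$, and in the non-separating case $\overline{\G_t}^{\mathrm{Zar}}\supseteq\big\langle\,\overline{\rho_0(\G')}^{\mathrm{Zar}},\ c\,\overline{\rho_0(\G')}^{\mathrm{Zar}}\,c^{-1}\big\rangle$ with $c=\rho_0(s)b_t$. Granting (Step 2) that each of $\overline{\rho_0(\G_1)}^{\mathrm{Zar}},\overline{\rho_0(\G_2)}^{\mathrm{Zar}},\overline{\rho_0(\G')}^{\mathrm{Zar}}$ equals $H$, we obtain $\overline{\G_t}^{\mathrm{Zar}}\supseteq\langle H,\,gHg^{-1}\rangle$ with $g=b_t$, respectively $g=\rho_0(s)b_t\rho_0(s)^{-1}$ (here $\rho_0(s)\in\PSO(d,1)\subset\PO(d,1)=N_{\PGL_{d+1}(\R)}(H)$, so this is again a conjugate of $H$). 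For $t\neq0$ the transformation $b_t$ does not preserve the hyperbolic quadratic form up to scalar, hence $b_t\notin\PO(d,1)=N_{\PGL_{d+1}(\R)}(H)$, so $gHg^{-1}\neq H$.

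\emph{Step 2 (Zariski density of the pieces).} Let $\G_\bullet$ be any of $\G_1,\G_2,\G'$. It is a discrete subgroup of $H$ containing $\rho_0(\Delta)=\pi_1(\Sigma)$, which — as $d\geq3$, so that $\Sigma$ is a finite-volume hyperbolic manifold of dimension $\geq2$ — is a lattice in $L$ and hence Zariski dense in $L$ by the Borel density theorem. Thus $\overline{\rho_0(\G_\bullet)}^{\mathrm{Zar}}$ lies between $L$ and $H$; since the $\mathfrak{l}$-module $\mathfrak{h}/\mathfrak{l}$ is the (irreducible) standard representation of $\mathfrak{l}=\mathfrak{so}(d-1,1)$ on $\R^{d}$, the only subalgebras of $\mathfrak{h}$ containing $\mathfrak{l}$ are $\mathfrak{l}$ and $\mathfrak{h}$, so $\overline{\rho_0(\G_\bullet)}^{\mathrm{Zar}}$ is, up to finite index, either $L$ or $H$. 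In the first case $\G_\bullet$ preserves $\widetilde\Sigma$, so its limit set is contained in $\partial\widetilde\Sigma$; but $\G_\bullet\supseteq\pi_1(\Sigma)$, whose limit set is all of $\partial\widetilde\Sigma$, so the limit set of $\G_\bullet$ equals the round $(d-2)$-sphere $\partial\widetilde\Sigma$, whence the convex core of the corresponding piece of $M$ is the $(d-1)$-dimensional manifold $\widetilde\Sigma/\G_\bullet$ — contradicting that this piece is a hyperbolic $d$-manifold with totally geodesic boundary, hence equal to its own $d$-dimensional convex core. Therefore $\overline{\rho_0(\G_\bullet)}^{\mathrm{Zar}}=H$.

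\emph{Step 3 (the Lie-algebra computation) and conclusion.} By Steps 1--2 it remains to check that, for $t\neq0$, the Lie subalgebra of $\mathfrak{sl}_{d+1}(\R)$ generated by $\mathfrak{h}$ and $\Ad(g)\mathfrak{h}$ is everything; conjugating by $\Ad(\rho_0(s))$, which preserves $\mathfrak{h}$, reduces this to $g=b_t$. Decompose $\R^{d+1}=W\oplus\R e$ with $W$ carrying the form of signature $(d-1,1)$ and $e$ the line scaled by $b_t$, and use the corresponding block decomposition. Then $\mathfrak{h}$ consists of the matrices $\left(\begin{smallmatrix}A & u\\ -(J'u)^{\mathrm{T}} & 0\end{smallmatrix}\right)$ with $A\in\mathfrak{l}=\mathfrak{so}(W)$ and $u\in W$ ($J'$ the Gram matrix of $W$), while $\Ad(b_t)$ rescales the $u$-block by $e^{-t}$ and the lower block by $e^{t}$. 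Subtracting a suitable element of $\Ad(b_t)\mathfrak{h}$ from one of $\mathfrak{h}$, the hypothesis $e^{2t}\neq1$ shows that $\mathfrak{h}+\Ad(b_t)\mathfrak{h}$ contains $\mathfrak{l}$ together with both off-diagonal blocks $\mathfrak{n}^{\pm}$; and $[\mathfrak{n}^{+},\mathfrak{n}^{-}]$, consisting of the matrices $\left(\begin{smallmatrix}uw^{\mathrm{T}}& 0\\ 0&-w^{\mathrm{T}}u\end{smallmatrix}\right)$, spans all block-diagonal traceless matrices $\left(\begin{smallmatrix}B&0\\ 0&-\operatorname{tr}B\end{smallmatrix}\right)$, $B\in\mathfrak{gl}(W)$. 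Together these span $\mathfrak{sl}_{d+1}(\R)$, so $\overline{\G_t}^{\mathrm{Zar}}$ has Lie algebra $\mathfrak{sl}_{d+1}(\R)=\mathfrak{pgl}_{d+1}(\R)$ and, being Zariski closed in the connected group $\PGL_{d+1}(\R)$, equals $\PGL_{d+1}(\R)$. The main obstacle is Step 2 — ruling out that a vertex group (or $\pi_1$ of $M$ cut along $\Sigma$) is virtually conjugate into $\PSO(d-1,1)$ — together with handling the separating and non-separating splittings in parallel; the computation in Step 3 is routine once the block decomposition adapted to $\widetilde\Sigma$ is fixed.
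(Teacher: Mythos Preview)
Your argument is correct and follows a genuinely different route from the paper's. The paper first establishes strong irreducibility of $\G_t$ (essentially your Step~2, phrased as a limit-set argument in the proof of Lemma~\ref{l:irreducible}) and then applies a result of Benoist type (Lemma~\ref{detail_adh_zar}): because $\G_t$ preserves a properly convex open set and contains a peripheral subgroup whose Zariski closure has orbits of dimension at least $d-1$ (this uses the cusp classification, Theorem~\ref{t:holo_end_standorbend}), the connected Zariski closure must be $\PSO_{d,1}(\R)$ or $\PGL_{d+1}(\R)$, and the former is excluded since $c_t$ fails to normalise it. Your route sidesteps the Benoist lemma, the proper-convexity theorem, and the entire cusp analysis, replacing them with the direct observation that bending places two distinct conjugates of $\mathfrak{so}(d,1)$ inside the Lie algebra of $\overline{\G_t}^{\mathrm{Zar}}$, together with the explicit check that $\mathfrak{so}(d,1)+\Ad(b_t)\,\mathfrak{so}(d,1)$ generates $\mathfrak{sl}_{d+1}(\R)$. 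This is more self-contained and separates the purely algebraic content of Zariski density from the convex geometry; the paper's version instead recycles machinery it has already built for other purposes. One minor remark on Step~2: the convex-core phrasing can be tightened by noting that if $\G_\bullet$ preserved $\widetilde\Sigma$ then, since $\Delta$ is already a lattice in $L$, one would have $[\G_\bullet:\Delta]<\infty$, whence $\widetilde{N_\bullet}$ would be a single half-space of $\HH^d$ and $N_\bullet\subset M$ would have infinite hyperbolic volume, contradicting $\mathrm{vol}(M)<\infty$.
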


This result may be of independent interest because of its connection to thin groups. A group $G\subset \GL_{d+1}(\R)$ is \emph{thin} if it is Zariski dense and is also an infinite index subgroup of a lattice in $\GL_{d+1}(\R)$. Thin groups have been the object of much recent research because of their connections to number theory and a variety of Diophantine problems (see the following survey for much more detail \cite{sarnak_survey}). Theorem \ref{zariskidense} provides an infinite number of families, $\G_t$, of Zariski dense subgroups and \merde{in \cite{BallasLongThin} it is shown that for various specializations of $t$ the groups $\G_t$ give rise to thin subgroups of $\GL_{d+1}(\R)$} 

As previously mentioned, one of the steps in the proof of Theorem \ref{maintheorem} is to analyze the geometry of the ends that arise when bending a hyperbolic manifold along a totally geodesic surface. As a result of this analysis we are able to conclude that  each end of the resulting projective manifold is of one of two types which we call \emph{standard cusps} and \emph{bent cusps}, respectively (see Theorem \ref{t:holo_end_standorbend}). Bent cusps were introduced by the first author in \cite{8knot_sam} where it was shown that the complete hyperbolic structure on the figure-eight knot complement can be deformed to a properly, but not strictly, convex projective structure whose end is a bent cusp. However, these deformations of the figure-eight knot complement do not arise via the bending construction since the figure-eight knot complement contains no \emph{embedded} totally geodesic hypersurfaces.

Both standard and bent cusps are examples of \emph{generalized cusps}, introduced by Cooper--Long--Tillmann \cite{CLT15}. Loosely speaking, a generalized cusp is a properly convex projective manifold that can be foliated by nice strictly convex hypersurface that are analogous to horospheres in hyperbolic geometry. Work of the first author, D.\ Cooper, and A.\ Leitner \cite{BalCoopLeit} provides a classification of generalized cusp and their main result shows that $d$-dimensional generalized cusps fall into $d+1$ families. In this classification, standard and bent cusps form two of these families.

Given a cusp $C$ in one of these $d+1$ families it is currently an open problem to produce a properly convex manifold $M$ with non-virtually abelian fundamental group with an end that is projectively equivalent to $C$. First note that, finite volume non-compact hyperbolic manifolds give examples in each dimension of properly convex manifolds with cusp ends that are standard cusps. In dimension $2$, there are examples of properly convex manifold with cusps from each family, see \cite{annuli_2,marquis_moduli_surf}. In dimension $3$, there are examples of properly convex manifolds with cusp groups that are $\R$-diagonalizable in \cite{cd4,ecima,ballas_danciger_lee}. As previously mentioned, examples of properly convex manifolds with bent cusp ends are constructed in \cite{8knot_sam}.  Theorem \ref{examples} shows that there are examples in each dimension of properly convex manifolds with bent cusp ends. 

\merde{
During the process of revision, M. Bobb was able to construct examples of convex projective $d$-manifold with any possible non-diagonalizable type of generalized cusps \cite{bobb}, for any $d$. His construction uses multiple bending, generalizing the work of this paper. The first author was also able to construct additional examples of convex projective 3-manifold with any possible type of non-diagonalizable generalized cusps \cite{sam_gen_cusps}, without using bending. 
}

The paper is organized as follows: Section \ref{s:preli} provides necessary background material concerning properly convex geometry, introduces the paraboloid model of hyperbolic geometry, and concludes with a description of certain centralizers that are relevant throughout the paper. Section \ref{s:bending} discusses the bending construction of Johnson--Millson \cite{JoMi} at the level of representations and the level of projective structures. Section \ref{s:ends} introduces standard and bent cusps as well as discussing some of their geometric properties.  Section \ref{s:bent_ends} is dedicated to understanding what types of ends are possible for projective manifolds obtained from bending. The main results of this section are that standard and bent cusps are the only types of ends that arise when bending hyperbolic manifolds along totally geodesic hypersurfaces (Corollary \ref{c:endclassification}) and that the projective manifolds arising from bending have finite volume (Theorem \ref{t:bendingfinitevolume}). Section \ref{s:homology} describes how the topology of the pair $(M,\Sigma)$ determines the geometry of the ends of the manifolds resulting from bending. Finally, Section \ref{s:examples} is dedicated to constructing the examples needed to prove Theorem \ref{maintheorem}. 

\subsubsection*{Acknowledgements}

L. Marquis warmly thanks S. Ballas for finding the gap in the paper \cite{Mar} and his invitation to work together to fix it, which resulted in a much better theorem.
The authors would also like to thank N. Bergeron, D. Cooper, D. Long, A. Reid, for their help at different stages of the writing of this paper.

S. Ballas acknowledges the ANR Facets grant which permitted him to visit L. Marquis at the University of Rennes 1 in Summer 2015 \merde{as well as partial support from the National Science Foundation under the grant DMS 1709097.} L. Marquis was also supported by the ANR Facets grant, the ANR Finsler grant and Centre Henri Lebesgue the during the preparation of this paper.

\merde{The authors would also like to thank the anonymous referee for several helpful suggestions that helped improve the paper.}

\section{Preliminaries}\label{s:preli}
\subsection{Properly convex geometry}

Let $\RP^d$ be the space of lines through the origin in $\R^{d+1}$. More concretely, $\RP^d=(\R^{d+1}\backslash \{0\})/(x\sim \lambda x)$, where $\lambda \in \R^\times$. There is a natural projection map $P:\R^{d+1}\backslash\{0\}\to \RP^d$ taking each point to the unique line through the origin in which it is contained. This map is called \emph{projectivization}. The projectivization of a hyperplane through the origin in $\R^{d+1}$ gives rise to a \emph{hyperplane} in $\RP^d$. Given a hyperplane $H\subset \RP^d$ the set $\A=\RP^d\backslash H$ is called an \emph{affine patch} as it can be naturally identified with an affine $d$-space.

  A subset $\O$ of the real projective space $\R\Pb^d$ is said to be \emph{convex} if there exists an affine patch $\A$ of $\R\Pb^d$ such that $\O \subset \A$ and $\O$ is a convex subset of $\A$ in the usual sense. If in addition the closure $\overline{\O}$ of $\Omega$ in $\RP^{d}$ is contained in $\A$ then we say that $\O$ is \emph{properly convex}. A simple, but useful property of properly convex domains is that they do not contain complete affine lines. A properly convex open set $\O$ is \emph{strictly convex} if its boundary, $\dO$, does not contain any non-trivial line segments. 
  
To each open properly convex set $\O\subset \RP^d$ we can associate a \emph{dual} properly convex set $\O^\ast\subset \RP^{d\ast}$ as follows: consider the cone 
$$\mathcal{C}_\O^\ast=\{\phi\in \R^{(d+1)\ast}\mid \phi(x)>0,\,  \forall\  [x]\in \overline{\O}\},$$
and let $\O^\ast=P(\mathcal{C}_\O^\ast)$. If $[\phi]\in \O^\ast$ then the kernel of $\phi$ gives rise to a hyperplane disjoint from $\O$ and thus to an affine patch containing $\O$. The points $[\phi]\in \dO^\ast$ correspond to hyperplanes that intersect $\dO$, but are disjoint from $\O$. Such a hyperplane is called a \emph{supporting hyperplane to $\O$}. A point of $\dO$ is \emph{of class $\C^1$} if it is contained in a unique supporting hyperplane to $\O$. The boundary $\dO$ is then said to be \emph{of class $\C^1$} if all of its points are of class $\C^1$.  

It is easy to see that the dual of an open properly convex set is also open and properly convex. Furthermore, it is also easy to see that the notions of strict convexity and having $\C^1$ boundary are dual to one another, in the sense that if $\O$ is strictly convex (resp.\ has $\C^1$ boundary) then $\O^\ast$ has $\C^1$ boundary (resp.\ is strictly convex). 

Let $\Sb^d$ be the space of half lines through the origin in $\R^{d+1}$, which we refer to as the \emph{projective $d$-sphere}. More explicitly, $\Sb^d=(\R^{d+1}\backslash \{0\})/(x\sim \lambda x)$, where $\lambda \in \R^+$. It is easy to see that $\Sb^d$ is topologically a sphere. The group of automorphisms of $\Sb^d$ can be identified with the group $\SL^\pm_{d+1}(\R)$ of real $(d+1)\times (d+1)$ matrices with determinant equal to $\pm 1$. 

There is an obvious two-fold covering from $\pi:\Sb^d\to \RP^d$. If $H$ is a hyperplane in $\RP^d$ then the $\pi$-preimage of $H$ is double covered by an equatorial hypersphere in $\Sb^d$. Each such hypersphere partitions $\Sb^d$ into two $d$-balls  each of which is diffeomorphic (via $\pi$) to the affine patch determined by $H$. For this reason we call the complementary regions of a hypersphere \emph{affine patches}.  Given a properly convex domain $\O\subset \RP^d$ its preimage under $\pi$ consists of two components, each diffeomorphic to $\Omega$. Furthermore, the group $\PGL(\Omega)$ can be identified with a subgroup $\SL^\pm(\O)\subset \SL^\pm_{d+1}(\R)$. One convenience of the above identification is that it allows us to identify elements of $\PGL(\O)$ (which are equivalence classes of matrices) with elements of $\SL^\pm(\O)$ (which are actual matrices). We will use this identification implicitly throughout the paper. Another is that it allows us to regard $\O$ as a subset of a simply connected space.

\par{
Every properly convex open set $\O$ of $\R\Pb^d$ is equipped with a natural metric $d_{\O}$ called the \emph{Hilbert metric} defined using the cross-ratio in the following way: take any two points $x \neq y \in \O$ and draw the line between them. This line intersects the boundary $\dO$ of $\O$ in two points $p$ and $q$. We assume that $x$ is between $p$ and $y$. Then the following formula defines a metric (see Figure \ref{disttt}):
}
$$d_{\O}(x,y) =  \displaystyle \frac{1}{2}\ln \Big( [p:x:y:q] \Big)$$
\vspace*{.25em}

The topology on $\O$ induced by this metric coincides with the subspace topology coming from $\RP^d$.  The metric space $(\O,d_{\O})$ is complete, geodesic and the closed balls are compact. Furthermore, the group $\PGL(\O)$ acts properly by isometries on $\O$. 

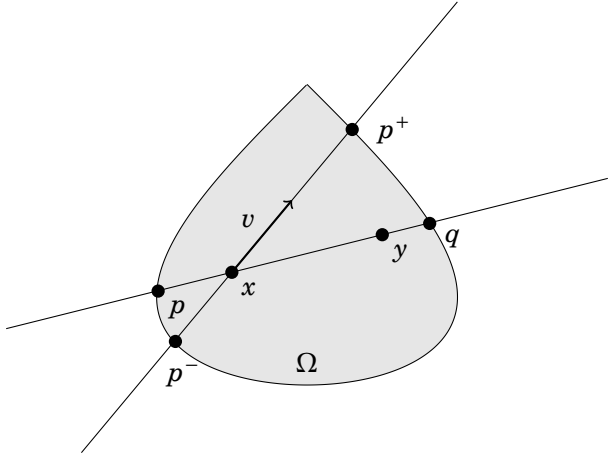
\begin{figure}[h!]
\centering
\begin{tikzpicture}
\filldraw[draw=black,fill=gray!20]
 plot[smooth,samples=200,domain=0:pi] ({4*cos(\x r)*sin(\x r)},{-4*sin(\x r)});
cycle;
\draw (-1,-2.5) node[anchor=north west] {$x$};
\fill [color=black] (-1,-2.5) circle (2.5pt);
\draw (1,-2) node[anchor=north west] {$y$};
\fill [color=black] (1,-2) circle (2.5pt);
\draw [smooth,samples=200,domain=-4:4] plot ({\x},{0.25*\x-2.25});
\draw (-1.97,-2.75) node[anchor=north west] {$p$};
\fill [color=black] (-1.98,-2.75) circle (2.5pt);
\draw (1.7,-1.8) node[anchor=north west] {$q$};
\fill [color=black] (1.63,-1.85) circle (2.5pt);
\draw [smooth,samples=200,domain=-3:2] plot ({\x},{1.2*\x-1.3});
\draw[->,distance=10pt,thick] (-1,-2.5) -- (-0.2,-1.54);
\draw (-1,-1.6) node[anchor=north west] {$v$};
\fill [color=black] (-1.75,-3.42) circle (2.5pt);
\draw (-2,-3.6) node[anchor=north west] {$p^-$};
\fill [color=black] (.6,-.6) circle (2.5pt);
\draw (.8,-.3) node[anchor=north west] {$p^+$};
\draw (0,-3.7) node {$\O$};
\end{tikzpicture}
\caption{Hilbert distance}
\label{disttt}
\end{figure}

\par{
The Hilbert metric gives rise to a Finsler structure on $\O$ defined by a very simple formula. Let $x$ be a point of $\O$ and $v$ a vector of the tangent space $T_x \O$ of $\O$ at $x$. The quantity  $\left. \frac{d}{dt}\right| _{t=0} d_{\O}(x,x+tv)$ defines a Finsler structure $F_{\O}(x,v)$ on $\O$. Moreover, if we choose an affine chart $\A$ containing $\O$ and a euclidean norm $|\cdot|$ on $\A$, we get:
}
\begin{equation}\label{e:finsler}
F_{\O}(x,v) = \left. \frac{d}{dt}\right| _{t=0} d_{\O}(x,x+tv) = \frac{|v|}{2}\Bigg(\frac{1}{|xp^-|} + \frac{1}{| xp^+|} \Bigg)
\end{equation}

\par{
Where $p^-$ and $p^+$ are the intersection points of the line through $x$ spanned by $v$ with $\dO$ and $|ab|$ is the distance between points $a,b$ of $\A$ for the euclidean norm $|\cdot|$ (see Figure \ref{disttt}). The regularity of this Finsler metric is determined by the regularity of the boundary $\dO$ of $\O$, and the Finsler structure gives rise to a Hausdorff measure $\mu_{\O}$ on $\O$ which is absolutely continuous with respect to Lebesgue measure, called the \emph{Busemann volume}.
}
\par{
More concretely, if $A \subset \O$ is a Borel subset, then the Busemann volume of $A$, denoted $\mu_{\O}(A)$, is computed as 
$$\int_{A}\frac{\alpha_d}{\mu_L(B_z^{\O}(1))}d\mu_L(z),$$
where $\mu_L$ is the Lebesgue measure on $(\A,| \cdot |)$, $\alpha_d$ is the Lebesgue volume of a unit $d$-ball, and $B_z^\O(1)$ is the unit ball for the Hilbert norm on the tangent space $T_z \O$. It is easy to see that the measure defined by this formula does not depend on the choice of the affine patch containing $\Omega$ or on the euclidean norm $| \cdot |$ on $\A$ since $\mu_{\O}$ is a Hausdorff measure of $(\O,d_{\O})$. Furthermore, if $\Gamma$ is a discrete subgroup of $\PGL(\O)$ we see that $\mu_{\O}$ is $\Gamma$-invariant and thus descends to a measure $\mu_{\O/\G}$ on $\O/\G$.   
}
\\
\par{
We close this section by mentioning some useful ``contravariance'' properties of the Hilbert metric and Busemann volume of different domains.

\begin{proposition}\label{p:compa_hilbert}
Let $\O_1\subset \O_2$ be two properly convex open sets, and let $x,y\in \O_1$. Then $d_{\O_2}(x,y)\leq d_{\O_1}(x,y)$.
\end{proposition}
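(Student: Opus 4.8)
The plan is to reduce the inequality to a one–dimensional computation with the cross–ratio, exactly as in its definition. Assume $x\neq y$ (otherwise there is nothing to prove) and let $\ell$ be the projective line through $x$ and $y$. Since $\O_1$ and $\O_2$ are properly convex, each $\ell\cap\O_i$ is a non-empty bounded open segment whose two endpoints $p_i,q_i$ lie on $\partial\O_i$; because $\O_1\subseteq\O_2$ the open segment $\ell\cap\O_1$ is contained in $\ell\cap\O_2$, so after orienting $\ell$ so that one travels from $x$ toward $y$ in the positive direction and so that $p_i$ lies on the $x$–side, the six points occur on $\ell$ in the order $p_2,\ p_1,\ x,\ y,\ q_1,\ q_2$ (with $p_2=p_1$ and/or $q_1=q_2$ permitted). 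Since $\overline{\O_2}$ is properly convex it lies in some affine patch $\A$, hence so does the whole segment $[p_2,q_2]$, and I may fix an affine coordinate $t$ on $\ell$ in which all the relevant cross–ratios are computed honestly by the Euclidean formula.

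Next I would simply write everything out. Normalising the coordinate so that $x=0$ and $y=u>0$, put $p_i$ at $t=-s_i$ and $q_i$ at $t=r_i$, where $0<s_1\leq s_2$ and $u<r_1\leq r_2$. Directly from the definition of the Hilbert metric,
\[
d_{\O_i}(x,y)=\tfrac12\ln[p_i:x:y:q_i]
=\tfrac12\ln\!\left(\frac{|xq_i|\,|yp_i|}{|xp_i|\,|yq_i|}\right)
=\tfrac12\ln\!\left(\Big(1+\tfrac{u}{s_i}\Big)\Big(1+\tfrac{u}{r_i-u}\Big)\right).
\]
The two elementary functions $s\mapsto 1+u/s$ on $(0,\infty)$ and $r\mapsto 1+u/(r-u)$ on $(u,\infty)$ are (weakly) decreasing, so passing from $(s_1,r_1)$ to $(s_2,r_2)$ can only decrease the product inside the logarithm; therefore $d_{\O_2}(x,y)\leq d_{\O_1}(x,y)$, which is the claim.

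As an alternative, one could argue infinitesimally via the Finsler description \eqref{e:finsler}: for $x\in\O_1$ and $v\in T_x\O_1=T_x\O_2$ the two boundary intercepts satisfy $|xp^-_{\O_2}|\geq|xp^-_{\O_1}|$ and $|xp^+_{\O_2}|\geq|xp^+_{\O_1}|$, whence $F_{\O_2}(x,v)\leq F_{\O_1}(x,v)$; integrating this along a $d_{\O_1}$–geodesic from $x$ to $y$ (which stays inside $\O_1$) gives the same conclusion. This second route, however, invokes the existence of geodesics quoted earlier, whereas the cross–ratio computation is entirely self-contained.

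I do not expect a serious obstacle here; the only points that need care are the bookkeeping of orientation and signs in the cross–ratio, so that $[p_i:x:y:q_i]\geq 1$ and the logarithm is non-negative, and the degenerate configurations where $\partial\O_1$ and $\partial\O_2$ meet $\ell$ at a common point, in which case the corresponding factor contributes an equality rather than a strict inequality.
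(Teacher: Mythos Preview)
Your proof is correct and follows essentially the same approach as the paper: both reduce the inequality to the monotonicity of the cross-ratio as the endpoints of the intersecting segment move outward. The paper's proof is terser, stating only the one-endpoint inequality $[a:x:y:b+t]\leq[a:x:y:b]$ and leaving the rest implicit, whereas you carry out the computation explicitly for both endpoints; your version is the more complete of the two.
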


\begin{proof}
The proposition is a consequence of the following inequality whose verification is a straightforward computation. If $a,x,y,b\in \RP^1$ and $t>0$ then
$$[a:x:y:b+t]\leq [a:x:y:b]$$
\end{proof}
 
\begin{proposition}(see Colbois-Verovic-Vernicos \cite[Proposition 5]{aire_cvv})\label{p:compa_busemann}
Let $\O_1 \subset \O_2$ be two properly convex open sets; then for any Borel set $D$ of $\O_1$, we have $\mu_{\O_2}(D) \leqslant \mu_{\O_1}(D)$.
\end{proposition}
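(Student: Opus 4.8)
The plan is to deduce Proposition~\ref{p:compa_busemann} from Proposition~\ref{p:compa_hilbert} by comparing the two Busemann volumes pointwise, using the description of $\mu_{\O}$ as a Hausdorff measure built from the Finsler (Hilbert) unit balls. Fix an affine patch $\A$ containing $\overline{\O_2}$ (hence also $\overline{\O_1}$) and a euclidean norm $|\cdot|$ on $\A$; both Busemann volumes may be computed in this common chart by the chart-independence noted above. Then for any Borel set $D\subset \O_1$,
\[
\mu_{\O_i}(D)=\int_D \frac{\alpha_d}{\mu_L\!\left(B_z^{\O_i}(1)\right)}\,d\mu_L(z),\qquad i=1,2,
\]
so it suffices to show that for every $z\in \O_1$ one has $\mu_L\!\left(B_z^{\O_1}(1)\right)\leq \mu_L\!\left(B_z^{\O_2}(1)\right)$, i.e.\ that the Hilbert unit ball of the smaller domain (in a fixed tangent space, viewed inside $\A$) is contained in, or at least has no larger Lebesgue volume than, the Hilbert unit ball of the larger domain.

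The key step is the pointwise Finsler inequality $F_{\O_2}(z,v)\leq F_{\O_1}(z,v)$ for all $z\in\O_1$ and $v\in T_z\O_1$. This is immediate from Proposition~\ref{p:compa_hilbert}: since $d_{\O_2}(z,z+tv)\leq d_{\O_1}(z,z+tv)$ for all small $t>0$ (with equality at $t=0$), differentiating the formula $F_{\O}(z,v)=\left.\frac{d}{dt}\right|_{t=0} d_{\O}(z,z+tv)$ from the right gives $F_{\O_2}(z,v)\leq F_{\O_1}(z,v)$; alternatively one reads it directly off formula~\eqref{e:finsler}, because enlarging $\O$ moves the boundary intersection points $p^{\pm}$ farther from $z$ and hence decreases $\frac{1}{|zp^-|}+\frac{1}{|zp^+|}$. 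Consequently $\{v : F_{\O_1}(z,v)\leq 1\}\subset \{v : F_{\O_2}(z,v)\leq 1\}$, that is $B_z^{\O_1}(1)\subset B_z^{\O_2}(1)$ as subsets of $T_z\O_i\cong \A$, which yields $\mu_L(B_z^{\O_1}(1))\leq \mu_L(B_z^{\O_2}(1))$.

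Combining the two displays, the integrand for $\mu_{\O_2}$ is pointwise $\leq$ the integrand for $\mu_{\O_1}$ on $D$, and integrating against $\mu_L$ gives $\mu_{\O_2}(D)\leq \mu_{\O_1}(D)$, as desired. The only mild subtlety to address is making sure the tangent spaces $T_z\O_1$ and $T_z\O_2$ are identified compatibly with the fixed affine chart so that the containment of unit balls and the Lebesgue-volume comparison take place in the same normed space; this is automatic once $\A$ and $|\cdot|$ are fixed at the outset. I do not expect a genuine obstacle here — the content is entirely in Proposition~\ref{p:compa_hilbert} — so one may simply cite Colbois--Verovic--Vernicos \cite{aire_cvv} for the details, as the statement already does.
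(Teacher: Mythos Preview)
Your argument is correct and is exactly the standard proof: the Finsler comparison $F_{\O_2}\le F_{\O_1}$ (either from Proposition~\ref{p:compa_hilbert} or directly from~\eqref{e:finsler}) gives $B_z^{\O_1}(1)\subset B_z^{\O_2}(1)$, hence the pointwise inequality of densities and the result. The paper itself does not supply a proof of this proposition at all---it simply cites Colbois--Verovic--Vernicos~\cite{aire_cvv}---so there is nothing to compare; your write-up is precisely the argument one finds in that reference.
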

}


\subsection{The paraboloid model of $\HH^d$}\label{s:paraboloid}

In this section we discuss a projective model of hyperbolic space that can be viewed as a projective analogue of the upper half space model. Specifically, there is a distinguished point, $\infty$, in the boundary of this model and automorphisms fixing $\infty$ have a particularly nice form. 

Let $Q_d$ be the quadratic form on $\RR^{d+1}$ given by 
\begin{equation}\label{quadform}
 x_2^2+\ldots x_d^2-2x_1x_{d+1}
\end{equation}
It is easily verified that $Q_d$ has signature $(d,1)$ and so the projectivization of its negative cone gives a projective model of $\HH^d$ with isometry group $\PO(Q_d)$. More explicitly, if we let $\{e_i\}_{i=1}^{d+1}$ be the standard basis for $\RR^{d+1}$ and $\{e_i^\ast\}_{i=1}^{d+1}$ the corresponding dual basis, then we see that the negative cone of $Q_d$ is disjoint from the hyperplane dual to $e_{d+1}^\ast$ and so we can realize this model for $\HH^d$ as a paraboloid whose homogeneous coordinates are 
\begin{equation}\label{e:homogenouscoords}
 \{[x_1:\ldots:x_d:1]\mid x_1>(x_2^2+\ldots+x_d^2)/2\}
\end{equation}
Furthermore, the boundary of $\HH^d$ can be identified with the space of isotropic lines for the form $Q_d$. Again, we can explicitly realize $\partial \HH^d$ in homogeneous coordinates as
\begin{equation}\label{e:boundarycoords}
 \{[x_1:\ldots:x_{d}:1]\mid x_1=(x_1^2+\ldots+x_{d}^2)/2\}\cup\{[1:0:\ldots:0]\}
\end{equation}
We henceforth use these identifications implicitly and will refer to the point $[1:0:\ldots:0]\in \partial \HH^d$ as $\infty$. 

Let $\so(Q_d)$ be the Lie algebra of $\PSO(Q_d)$ and let $\parlie_d$ be the Lie algebra of the group $P_d$ of parabolic translations fixing $\infty$. This Lie algebra can be described explicitly as 
$$\parlie_d=\left\{\begin{pmatrix}
                  0 & u_1 & \ldots &u_{d-1} &0\\
                  0 & 0 &\ldots & 0 & u_1\\
                  \vdots & \vdots& \ddots &\vdots & \vdots\\
                  0 & 0 & \ldots & 0 & u_{d-1}\\
                  0 & 0 & \ldots & 0 &0
                 \end{pmatrix} \mid (u_1,\ldots, u_{d-1})\in \RR^{d-1}\right\}$$
As a Lie algebra, $\parlie_d$ is isomorphic to $\RR^{d-1}$ and the exponential map provides an isomorphism between $\parlie_d$ and $P_d$. We will often write elements of $P_d$ in the following block form

\begin{equation}\label{pnblockform}
 \begin{pmatrix}
  1 & v^t & \frac{\abs{v}^2}{2}\\
  0 & I & v\\
  0 & 0 & 1
 \end{pmatrix}
\end{equation}
where $v$ is a (column) vector in $\RR^{d-1}$, $I$ is the $(d-1)\times(d-1)$ identity matrix, and the zeros represent zero matrices of the appropriate and shapes. If $g\in P_d$ then the vector $v$ in \eqref{pnblockform} is called the \emph{translation vector} of $g$.

\merde{
There is a foliation $\mathcal{F}$ of $\RP^d\bs[\ker e_{d+1}^\ast]$ that is (leafwise) invariant under $P_d$. In terms of \eqref{e:boundarycoords} each leaf of $\mathcal{F}$ is of the form
\begin{equation}\label{foliation}	
\mathcal{F}_c=\{[x_1:\ldots,:x_d:1]\mid x_1=(x_2^2+\ldots+x_d^2)/2+c\}
\end{equation}
 for some $c\in \R$. When $c>0$ this corresponds to the foliation of $\HH^d$ by horospheres centered at $\infty$ (see Section \ref{s:generalizedcusps} for more details). Furthermore, the orbit closures of the action of  $P_d$ in $\RP^d$ consist of $[e_1]$, $[\ker e_{d+1}^\ast]$, and closures of leaves of the above foliation.
}

Let $H$ be a hyperplane in $\HH^d$. All such hyperplanes are \merde{in the same $\PSO(Q_d)$ orbit} and so after applying an element of $\PSO(Q_d)$ we can assume that $H$ is given by the intersection of $\HH^d$ and the projective hyperplane defined by the equation $x_2=0$. We will refer to this hyperbolic hyperplane as $\HH^{d-1}_0$. \merde{While the choice of the plane $x_2=0$ may initially seem odd, it provides a convenient way to projectively embed the paraboloid model of $\HH^{d-1}$ into the paraboloid model of $\HH^d$.} Let $\PSO(Q_d;d-1,1)$ be the index two subgroup of the stabilizer in $\PSO(Q_d)$ of $\HH^{d-1}_0$ that preserves both components of the complement of $\HH^{d-1}_0$ in $\HH^d$.  The subgroup of parabolic translations of $\PSO(Q_d;d-1,1)$, which we denote by $P_{d-1}^0$, can be identified with the image under the exponential map of the subalgebra $\parlie_{d-1}^0$ of $\parlie_d$ of elements whose translation vector has zero as its first component.

\subsection{Centralizers}
In order to define bending and later to understand the geometry of the ends of manifolds arising from bending it will be necessary to describe the centralizers in $\PGL_{d+1}(\RR)$ of several of the groups described in the previous section.  

The identity component of the centralizer of $\PSO(Q_d; d-1,1)$ in $\PSO(Q_d)$ is trivial, however when regarded as a subgroup of $\PGL_{d+1}(\RR)$ it has 1-dimensional centralizer which is described in the following lemma (similar lemmas appear in \cite{JoMi}, \cite[Lem 3.2.3]{BallasThesis} and \cite[Lem 3.3]{Mar})

\begin{lemma}\label{psocentralizer}
The identity component $C_{d-1}$ of the centralizer of $\PSO(Q_d; d-1,1)$ in $\PGL_{d+1}(\RR)$ is one dimensional and is equal to the one parameter group with infinitesimal generator 
\begin{equation}\label{e:c_t}
C=\begin{pmatrix}
   -1 \\
    & d \\
    & & -1 \\  
    &&& \ddots \\
   &&&& -1
  \end{pmatrix}
\end{equation}
Specifically, $C_{d-1}=\{c_t\mid t\in \RR\}$, where $c_t=\exp(t C)$. 
\end{lemma}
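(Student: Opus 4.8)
The plan is to compute the centralizer by pure linear algebra, working at the Lie algebra level and then exponentiating. Write $H := \PSO(Q_d; d-1,1)$. Since $C_{d-1}$ is connected, it suffices to determine the Lie algebra $\mathfrak{z}$ of the centralizer of $H$ in $\PGL_{d+1}(\RR)$, i.e. the space of $X \in \mathfrak{gl}_{d+1}(\RR)$ (modulo scalars) such that $\Ad(h)X = X$ for all $h \in H$, equivalently $[\mathfrak{h}, X] = 0$ where $\mathfrak{h} = \mathfrak{so}(Q_d; d-1,1)$ is the Lie algebra of $H$. The key structural input is that $\RR^{d+1}$, as an $H$-representation, decomposes in a controlled way: the form $Q_d$ has a $Q_d$-orthogonal splitting $\RR^{d+1} = W \oplus W^\perp$, where $W$ is the $3$-dimensional span of $e_1, e_2, e_{d+1}$ (the coordinates involved in the $x_2$-plane together with the light-cone directions $e_1, e_{d+1}$) carrying a form of signature $(1,1)$ plus the line $\RR e_2$ of signature $(1,0)$ — more precisely $H$ preserves the decomposition $\RR^{d+1} = \RR e_2 \oplus V$ where $V = \mathrm{span}(e_1, e_3, \dots, e_d, e_{d+1})$ is the ambient space of the $\PSO(Q_{d-1})$-block, and $H$ acts on $V$ via $\SO(Q_{d-1})^\circ$ (an irreducible representation, since $d-1 \geq 2$) and trivially on $\RR e_2$.

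The main steps are then: (1) Identify $\mathfrak{h} \cong \mathfrak{so}(d-1,1)$ acting on $\RR^{d+1} = \RR e_2 \oplus V$, trivially on the first summand and by the standard representation on $V$. (2) Apply Schur's lemma: any $H$-equivariant endomorphism $X$ of $\RR^{d+1}$ must preserve the isotypic decomposition. The standard representation of $\SO(d-1,1)^\circ$ on $V \cong \RR^d$ is irreducible and nonisomorphic to the trivial representation (here $d \geq 3$ so $\dim V = d \geq 3 > 1$), hence $X$ is block diagonal: it acts as a scalar $\lambda$ on $\RR e_2$, as a scalar $\mu$ on $V$ (again by Schur, since $\mathrm{End}_H(V) = \RR$), and has no off-diagonal Hom-terms because $\Hom_H(\RR e_2, V) = \Hom_H(V, \RR e_2) = 0$. (3) Therefore $X = \mathrm{diag}(\mu, \lambda, \mu, \dots, \mu)$ with the $\lambda$ in the $e_2$-slot and $\mu$ elsewhere; modulo scalars this is a one-dimensional family, and choosing the representative with $\mu = -1, \lambda = d$ (so that $\tr X = 0$, i.e. $X \in \mathfrak{sl}_{d+1}$) gives exactly the matrix $C$ of \eqref{e:c_t}. (4) Conclude that $C_{d-1} = \exp(\RR C) = \{c_t \mid t \in \RR\}$ is the identity component of the centralizer, and that its intersection with $\PSO(Q_d)$ is trivial since $C \notin \mathfrak{so}(Q_d)$ — a non-scalar $X$ centralizing $H$ inside $\mathfrak{so}(Q_d)$ would have to be semisimple with eigenvalue $\mu$ of multiplicity $d$, forcing it to act as a scalar on a $d$-dimensional nondegenerate subspace, impossible for a nonzero element of $\mathfrak{so}$.

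I expect the only genuinely delicate point to be \textbf{(2)}, namely making the Schur's-lemma argument airtight: one must check that the relevant $\SO(d-1,1)^\circ$-representations are irreducible over $\RR$ (not merely over $\CC$), that the standard representation on $V$ and the trivial representation on $\RR e_2$ are non-isomorphic, and — crucially — that irreducibility persists for the full group $H = \PSO(Q_d; d-1,1)$ rather than only for its identity component; since $H$ is connected (being index two in the stabilizer but itself a connected Lie group — here one should note $\PSO(Q_d;d-1,1)$ is by construction the component preserving the two sides, hence connected) this last issue does not arise, but it is worth a sentence. Everything else — verifying the signature of $Q_d$, writing down $\mathfrak{h}$ explicitly, exponentiating $C$ — is routine computation that I would not carry out in detail. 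An alternative, more pedestrian route avoiding representation theory: pick two or three well-chosen elements $h_1, h_2 \in H$ (e.g. a hyperbolic element and a rotation in the $\SO(d-1)$ part), impose $[h_i, X] = 0$ directly on a general matrix $X = (x_{ij})$, and solve the resulting linear system; this is longer but entirely elementary, and matches the style of the cited lemmas in \cite{JoMi} and \cite{Mar}.
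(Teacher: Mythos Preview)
Your argument is correct, but there is nothing to compare it to: the paper does not prove this lemma. After the statement the text passes directly to Lemma~\ref{pncentralizer}, with only the parenthetical remark that ``similar lemmas appear in \cite{JoMi}, \cite[Lem~3.2.3]{BallasThesis} and \cite[Lem~3.3]{Mar}''. So your Schur's-lemma computation is strictly more than the paper provides.

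The argument itself is sound. The decomposition $\RR^{d+1}=\RR e_2\oplus V$ with $V=\mathrm{span}(e_1,e_3,\dots,e_d,e_{d+1})$ is exactly the right structural input: $H$ acts trivially on $\RR e_2$ and via the standard representation of $\mathfrak{so}(d-1,1)$ on $V$, which for $d\geq 3$ is absolutely irreducible over $\RR$ (the invariant form forces any intertwiner to be symmetric, hence scalar). Schur then gives the two-parameter family $\mathrm{diag}(\mu,\lambda,\mu,\dots,\mu)$ in $\mathfrak{gl}_{d+1}$, which is one-dimensional modulo scalars, and your trace-zero normalization recovers $C$ on the nose. Your observation that $H$ is connected is correct --- geometrically $\PSO(Q_d;d-1,1)$ is the group of orientation-preserving isometries of $\HH^d$ fixing the hyperplane $\HH^{d-1}_0$ and its two sides, hence identifies with $\mathrm{Isom}^+(\HH^{d-1})$ --- so the passage from the Lie-algebra centralizer to the group centralizer needs no further justification. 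Your closing remark that the ``pedestrian'' route (commuting a general matrix against a few explicit elements of $H$) matches the style of the cited references is also accurate; that is how \cite{Mar} argues.
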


The next lemma describes the centralizer of $P_d$ in $\PGL_{d+1}(\RR)$.

\begin{lemma}\label{pncentralizer}
The centralizer $\mathcal{Z}(P_d)$ of $P_d$ in $\PGL_{d+1}(\RR)$ consists of matrices of the following block form
\begin{equation}\label{e:pncentralizer}
 \begin{pmatrix}
   1 & u^t &b\\
   0 & I & u\\
   0 & 0 & 1
  \end{pmatrix}
\end{equation}
where $u\in\RR^{d-1}$ and $b\in \RR$. 
\end{lemma}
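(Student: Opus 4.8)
The plan is to reduce the problem to the Lie algebra $\parlie_d$ and then solve a small linear system coming from a block-matrix commutator.

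\emph{Step 1: reduction to $\parlie_d$.} Let $g\in\GL_{d+1}(\RR)$ represent an element of $\mathcal Z(P_d)$. For each $h\in P_d$ the hypothesis gives $ghg^{-1}=\lambda(h)\,h$ for some $\lambda(h)\in\RR^{\times}$; but $ghg^{-1}$ is conjugate to the unipotent matrix $h$, so all its eigenvalues equal $1$, whereas those of $\lambda(h)h$ all equal $\lambda(h)$, forcing $\lambda(h)=1$. Hence $g$ centralizes $P_d$ inside $\GL_{d+1}(\RR)$. Since $P_d=\exp(\parlie_d)$ is connected, the map $P_d\to P_d$, $h\mapsto ghg^{-1}$, is the identity, so its derivative at $I$ is the identity on $T_I P_d=\parlie_d$; equivalently $g$ commutes with $N_u=\begin{pmatrix}0&u^{t}&0\\0&0&u\\0&0&0\end{pmatrix}$ (blocks of sizes $1,d-1,1$) for every $u\in\RR^{d-1}$. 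Conversely, if $gN_u=N_ug$ for all $u$ then $g\exp(N_u)g^{-1}=\exp(N_u)$, so $g$ centralizes $P_d$. Thus $\mathcal Z(P_d)$ is exactly the projectivization of the centralizer of $\{N_u:u\in\RR^{d-1}\}$ in $\GL_{d+1}(\RR)$.

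\emph{Step 2: the block computation.} Write $g=\begin{pmatrix}a&b^{t}&c\\ p&M&q\\ r&s^{t}&f\end{pmatrix}$ in the same blocks, with $a,c,r,f\in\RR$, $b,p,q,s\in\RR^{d-1}$, and $M\in\RR^{(d-1)\times(d-1)}$. Computing $gN_u$ and $N_ug$ block by block and imposing equality for every $u\in\RR^{d-1}$ gives, after elementary manipulations, $p=0$, $r=0$, $s=0$, $q=b$, and $M=aI=fI$ (so $a=f$), with $c$ unconstrained. Invertibility of $g$ forces $a\neq 0$, and after dividing by $a$ we see that $[g]$ is represented by a matrix of the form \eqref{e:pncentralizer} (with $u=b/a$ and upper-right entry $c/a$). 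Running the computation the other way shows every matrix of the form \eqref{e:pncentralizer} commutes with all $N_u$ and hence lies in $\mathcal Z(P_d)$; alternatively, such a matrix is the product of an element of $P_d$ with $I+cE$, where $E$ has a single $1$ in position $(1,d+1)$ and visibly commutes with every $N_u$. Since distinct matrices of the form \eqref{e:pncentralizer} are never proportional (they all have $(1,1)$-entry $1$), they represent distinct elements of $\PGL_{d+1}(\RR)$, and the lemma follows.

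\emph{Main obstacle.} There is no real obstacle: once the reduction of Step~1 is in place, the result drops out of a bounded block-matrix calculation. The only place that needs attention is the start of Step~1 — excluding the a priori possibility that conjugation by $g$ rescales elements of $P_d$ by a nontrivial scalar — which the eigenvalue (unipotency) argument settles.
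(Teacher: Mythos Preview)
Your proof is correct. The reduction in Step~1 via unipotency is clean and handles the passage from $\PGL$ to $\GL$ carefully, and the block computation in Step~2 checks out: equating $gN_u$ and $N_ug$ entrywise for all $u$ forces exactly $p=r=s=0$, $q=b$, $M=aI=fI$, with $c$ free.

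Your approach differs meaningfully from the paper's. You reduce immediately to the Lie algebra $\parlie_d$ and solve a single linear system $gN_u=N_ug$; this is purely algebraic and self-contained. The paper instead argues geometrically: it first observes that $[e_1]$ and $[e_{d+1}^\ast]$ are the \emph{unique} fixed point and invariant hyperplane of $P_d$, so any centralizing element must preserve them, yielding a block upper-triangular shape; it then studies the induced affine actions on the quotient $\A'=\A/[e_1]$ and its dual (where $P_d$ acts by all translations, forcing the middle block to be scalar), and only at the end computes a group-level commutator to pin down the last relation $u=w$. Your route is shorter and more elementary here; the paper's route is more conceptual and reusable---indeed the same fixed-point/affine-quotient strategy is recycled verbatim in the proof of Lemma~\ref{pn-1centralizer} for $\mathcal Z(P_{d-1}^0)$, where the geometric picture (an invariant line and pencil rather than an invariant point and hyperplane) guides the argument.
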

\begin{proof}
 \merde{
A simple computation using block matrices shows that any element of the form \eqref{e:pncentralizer} commutes with every element of $P_d$. Next, Let $G$ be the subgroup of elements in $\PGL_{d+1}(\R)$ of the form
 $$\begin{pmatrix}
 	1 & 0 & b\\
 	0 & I & 0\\
 	0 & 0 & 1
 \end{pmatrix}.$$
 Observe that every element of the form $\eqref{e:pncentralizer}$ can be written as a product of an element of $P_d$ and an element of $G$. Furthermore, the group $G$ acts transitively on the leaves of $\mathcal{F}$ (see \eqref{foliation}). }
 \merde{
 Next, suppose that $B\in \mathcal{Z}(P_d)$. The point $[e_1]$ (resp.\ $[e_{d+1}^\ast]$) is the unique point in $\RP^d$ (resp.\ $\RP^{d\ast}$) preserved by $P_d$. As $B$ commutes with all elements of $P_d$, the group $B$ must also fix $[e_1]$ and $[e_{d+1}^\ast]$ and permute the leaves of $\mathcal{F}$.}
 
  \merde{Let $\mathcal{F}_c$ be a leaf of this foliation.  There is an element $C$ in $G$ so that $CB$ preserves $\mathcal{F}_c$. The closure of $\mathcal{F}_c$ bounds a copy of $\HH^d$ in $\RP^d$ that contains the point $\infty$ in its boundary and both $P_d$ and $CB$ preserves this copy of $\HH^d$. It follows that $CB$ is a hyperbolic isometry that commutes with every element of $P_d$. Hence $CB\in P_d$ and the result follows. }

\end{proof}

We conclude this subsection by identifying the centralizer of $P_{d-1}^0$ in $\PGL_{d+1}(\RR)$. The group $P_{d-1}^0$  \merde{acts trivially on} a unique line, $C$, in $\RP^d$ and a unique line, $C^\ast$, in $\RP^{d\ast}$ (a line in $\RP^{d\ast}$ corresponds to a pencil of hyperplanes). Namely $C$ is the line spanned by $[e_1]$ and $[e_2]$ and $C^\ast$ is pencil of hyperplanes corresponding to the line in $\RP^{d \ast}$ spanned by $[e_2^\ast]$ and $[e_{d+1}^\ast]$. \merde{The point $[e_1]$ is the unique point of $C$ contained in the core of the pencil defined by $C^\ast$ and $[e_{d+1}^\ast]$ is the only point in $C^\ast$ whose kernel contains the line $C$. It follows that both $[e_1]$ and $[e_{d+1}^\ast]$ are both preserved by any element that centralizes $P^0_{d-1}$}. Furthermore, any point in $\RP^d$ (resp.\ hyperplane  in $\RP^{d\ast}$) that is invariant under $P_{d-1}^0$ is contained in this line (resp.\ pencil). Consequently, any element of $\PGL_{d+1}(\RR)$ that centralizes $P_{d-1}^0$ must also preserve this line (resp.\ pencil). See Figure \ref{fig:coordinates}.

\merde{The group $P^0_{d-1}$ also preserves the foliation $\mathcal{F}$ leafwise. Furthermore, each leaf $\mathcal{F}_c$ admits a foliation whose leaves are 
$$\mathcal{F}_{c,d}=\{[x_1:x_2:\ldots,:x_d:1]\in \mathcal{F}_c\mid x_2=d\},$$
where $d\in \R$. This foliation of $\mathcal{F}_c$ is preserved leafwise by $P^0_{d-1}$. 
}

 We repeatedly use these facts in the proof of the following lemma:

\begin{figure}
\centering
\definecolor{ffxfqq}{rgb}{1,0.5,0}
\definecolor{qqqqff}{rgb}{0,0,1}
\definecolor{ffqqqq}{rgb}{1,0,0}
\begin{tikzpicture}[line cap=round,line join=round,>=triangle 45,x=0.8cm,y=0.8cm,scale=0.6]
\clip(-7.85,-7.93) rectangle (9.97,9);
\draw [line width=1.6pt] (0,0) circle (4.8cm);
\draw [rotate around={-53.91:(2.81,2.05)},line width=1.2pt,color=ffqqqq,fill=ffqqqq,fill opacity=0.08] (2.81,2.05) ellipse (3.89cm and 0.51cm);
\draw [rotate around={89.96:(0,0)},line width=1.2pt,color=ffqqqq,fill=ffqqqq,fill opacity=0.19] (0,0) ellipse (4.8cm and 0.58cm);
\draw [rotate around={-74.92:(1.49,0.4)},line width=1.2pt,color=ffqqqq,fill=ffqqqq,fill opacity=0.07] (1.49,0.4) ellipse (4.64cm and 0.51cm);
\draw [rotate around={-111.18:(-2,0.77)},line width=1.2pt,color=ffqqqq,fill=ffqqqq,fill opacity=0.06] (-2,0.77) ellipse (4.48cm and 0.46cm);
\draw [rotate around={47.49:(-2.96,2.72)},line width=1.2pt,color=ffqqqq,fill=ffqqqq,fill opacity=0.07] (-2.96,2.72) ellipse (3.55cm and 0.31cm);
\draw [line width=1.2pt] (-5,7)-- (-7,5);
\draw [line width=1.2pt] (-7,5)-- (7,5);
\draw [line width=1.2pt] (7,5)-- (9,7);
\draw [line width=1.2pt,color=qqqqff] (-6,6)-- (8,6);
\draw [line width=1.6pt,color=ffxfqq] (-1,5)-- (2,8);
\begin{scriptsize}
\fill [color=black] (0,6) circle (1.5pt);
\draw[color=black] (-0.4,6.4) node {$[e_1]=\infty$};
\fill [color=black] (0,-6) circle (1.5pt);
\draw[color=black] (0,-6.4) node {$[e_{d+1}]$};
\draw (7,6.4) node {$\rightarrow \quad [e_2]$};
\draw (1.5,8.01) node {$\nearrow$};
\draw (2,8.5) node {$[e_3],\dots, [e_d]$};
\draw (6,5.41) node {$T_{[e_1]} \partial \O = \Pb(\{ e_{d+1}^\ast = 0 \})$};
\draw (-5,6.4) node {$C$};
\draw (0,0) node {$\Hb_0^{d-1}$};
\end{scriptsize}
\end{tikzpicture}
\caption{\label{fig:coordinates}This picture illustrates our choice of coordinates. Some cross sections of the pencil $C^\ast$ with $\Hb^d$ are colored in red.}
\end{figure}
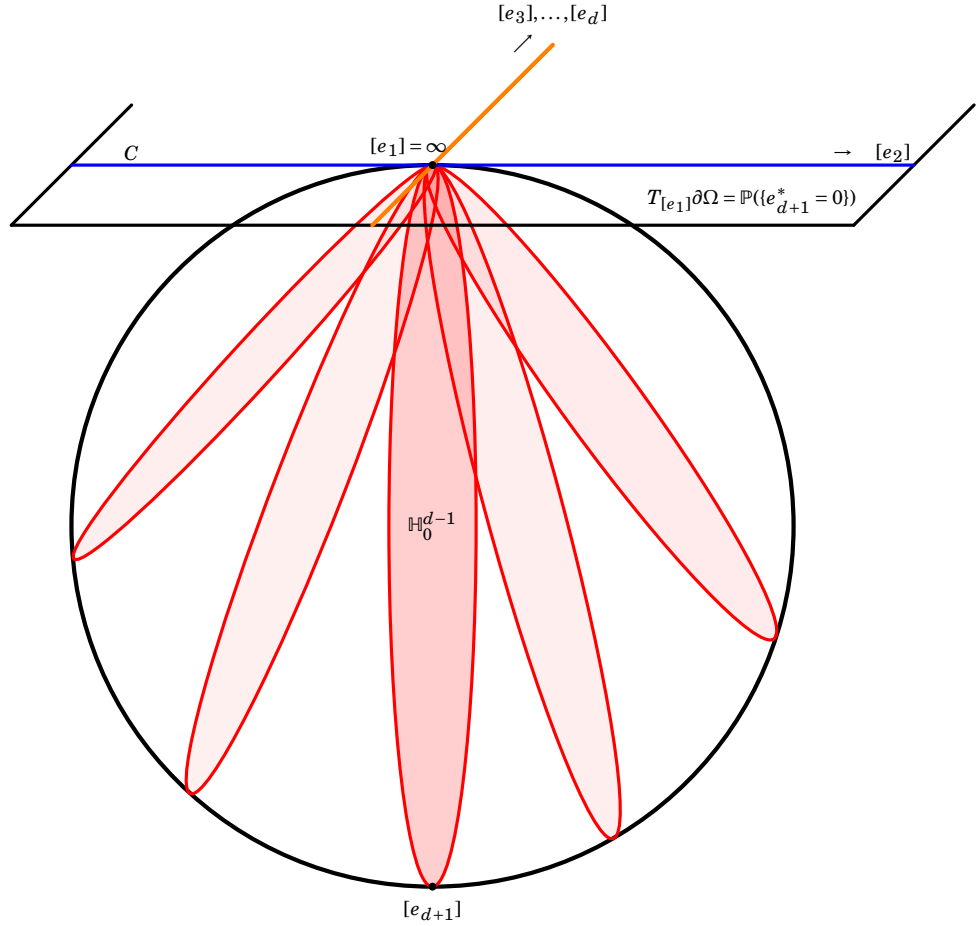

\begin{lemma}\label{pn-1centralizer}
 The centralizer $\mathcal{Z}(P_{d-1}^0)$ of $P_{d-1}^0$ in $\PGL_{d+1}(\RR)$ consists of elements with block form 
 \begin{equation}\label{e:pn-1centralizer}
 \begin{pmatrix}
    1 & a & u^t & z\\
    0 & b & 0 & c\\
    0 & 0 & I & u\\
    0 & 0 & 0 & 1
   \end{pmatrix}
 \end{equation}
where $a,c,z\in \RR$, $b\in \RR^\times$, $u\in \RR^{d-2}$, and $I$ is the $(d-2)\times (d-2)$ identity matrix.  
\end{lemma}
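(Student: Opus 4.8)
The plan is to follow the template of the proof of Lemma~\ref{pncentralizer}, working in the refined block decomposition $\RR^{d+1}=\spa(e_1)\oplus\spa(e_2)\oplus\spa(e_3,\dots,e_d)\oplus\spa(e_{d+1})$, in which an element of $P_{d-1}^0$ with translation vector $v\in\RR^{d-2}$ is
\[
g_v=\begin{pmatrix} 1 & 0 & v^t & \abs{v}^2/2\\ 0 & 1 & 0 & 0\\ 0 & 0 & I & v\\ 0 & 0 & 0 & 1\end{pmatrix}.
\]
First I would dispatch the easy inclusion by a direct block computation: every matrix of the shape \eqref{e:pn-1centralizer} commutes with each $g_v$, the only nontrivial entrywise checks being the ones involving the scalar identity $u^tv=v^tu$. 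Note that this works precisely because the $e_2$-row of the translation part of $g_v$ vanishes.

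For the reverse inclusion, let $B\in\PGL_{d+1}(\RR)$ centralize $P_{d-1}^0$. Conjugation by $B$ carries $P_{d-1}^0$-fixed points to $P_{d-1}^0$-fixed points and $P_{d-1}^0$-invariant hyperplanes to $P_{d-1}^0$-invariant hyperplanes, so by the discussion preceding the lemma $B$ preserves the line $C=\spa([e_1],[e_2])$ and the pencil $C^\ast=\spa([e_2^\ast],[e_{d+1}^\ast])$; equivalently, $B$ preserves $\spa(e_1,e_2)$ and the axis $\spa(e_1,e_3,\dots,e_d)=\bigcap_{H\in C^\ast}H$ of the pencil. Intersecting these two subspaces gives $B[e_1]=[e_1]$, while their span is the hyperplane $\{x_{d+1}=0\}$, so $B$ also fixes $[e_{d+1}^\ast]$. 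Hence $B$ has the block form
\[
B=\begin{pmatrix} \beta & a' & r^t & z'\\ 0 & \beta_2 & 0 & c'\\ \mathbf{0} & \mathbf{0} & D & p\\ 0 & 0 & \mathbf{0}^t & \beta'\end{pmatrix}
\]
with $\beta,\beta',\beta_2\in\RR^\times$, $D\in\GL_{d-2}(\RR)$, $r,p\in\RR^{d-2}$, and $a',c',z'\in\RR$.

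Then I would impose $Bg_v=g_vB$ for all $v\in\RR^{d-2}$. Comparing the $\spa(e_3,\dots,e_d)$ column block forces $v^tD=\beta v^t$ and $Dv=\beta' v$ for every $v$, hence $D=\beta I=\beta' I$; comparing the $(e_1,e_{d+1})$ entry gives $(\beta-\beta')\tfrac{\abs{v}^2}{2}+(r-p)^tv\equiv 0$, and separating its quadratic and linear parts yields $\beta=\beta'$ and $r=p$; every remaining entry already agrees. Rescaling so that $\beta=\beta'=1$ (whence $D=I$) and setting $u=r=p$, $a=a'$, $b=\beta_2$, $c=c'$, $z=z'$ produces exactly \eqref{e:pn-1centralizer}.

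Since the computation collapses once $B$ is in reduced form, the real content — and the step I would be most careful about — is the middle paragraph: correctly extracting the $B$-invariant subspaces $\spa(e_1,e_2)$ and $\spa(e_1,e_3,\dots,e_d)$ from the line/pencil description of $P_{d-1}^0$, and deducing from them that $B$ fixes both $[e_1]$ and $[e_{d+1}^\ast]$. The conceptual point that explains the extra parameters (compared with $\mathcal{Z}(P_d)$) is the same observation used in the easy direction: the translation vectors of $P_{d-1}^0$ have vanishing $e_2$-component, so $g_v$ never couples $\spa(e_2)$ to the rest, and nothing in the commutator relation constrains the $(2,2)$-entry $b$ or forces $a=c$. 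I do not anticipate a genuine obstacle beyond keeping the block bookkeeping straight.
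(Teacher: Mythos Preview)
Your argument is correct, and the overall strategy (invariant subspaces, then a commutator computation) matches the paper's, but the execution differs in two places worth noting.

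First, to pin down that $B$ fixes $[e_{d+1}^\ast]$, the paper argues dynamically: among the hyperplanes in the pencil $C^\ast$, the one dual to $e_{d+1}^\ast$ is the unique hyperplane on which every element of $P_{d-1}^0$ acts with maximal Jordan block of size $2$ rather than $3$, so any centralizing $B$ must fix it individually. Your argument is purely linear-algebraic: since $B$ preserves the pencil it preserves the core $\spa(e_1,e_3,\dots,e_d)$, and the span of this with the line $C=\spa(e_1,e_2)$ is exactly $\ker e_{d+1}^\ast$. Your route is shorter and avoids the Jordan analysis; the paper's route has the minor advantage of singling out $[e_{d+1}^\ast]$ without first needing the core.

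Second, once the block shape is established, the paper finishes by passing to the affine quotients $\A'$ and $\A'^\ast$ of Lemma~\ref{pncentralizer} and arguing with the induced affine actions, whereas you simply impose $Bg_v=g_vB$ entrywise and read off $D=\beta I=\beta' I$ and $r=p$ from the $(1,3)$-, $(3,4)$-, and $(1,4)$-blocks. Your computation is more self-contained; the paper's version recycles the machinery already set up for $\mathcal{Z}(P_d)$. Either way the constraints obtained are identical, and your observation that nothing in the commutator couples the $e_2$-row to the rest is exactly the conceptual point the paper is exploiting implicitly.
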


\begin{proof}
 Again, elements of the form \eqref{pn-1centralizer} form a Lie group and a simple computation using block matrices shows that any element of the form \eqref{e:pn-1centralizer} commutes with every element of $P_{d-1}^0$. 
 
 \merde{Next, observe that $\mathcal{Z}(P_d)$ is a subgroup of the set of matrices of the form \eqref{e:pn-1centralizer}. Let $B\in \mathcal{Z}(P^0_{d-1})$ Pick a leaf $\mathcal{F}_{c,0}$ of the foliation above. Then there is $C\in \mathcal{Z}(P_d)$ so that $CB$ preserves $\mathcal{F}_{c,0}$. The closure $\overline{\mathcal{F}_{c,0}}\subset [\ker e_2^\ast]$ can be identified with $\partial \HH^{d-1}\subset \RP^{d-1}$, arguing as in the proof of Lemma \ref{pncentralizer} and observing that $CB$ preserves both $[e_1]$ and $[e_{d+1}^\ast]$, it follows that 
 $$CB=\begin{pmatrix}
1 & a & u^t& z\\
0 & b & d^t & c\\
0 & e & I & u\\
0 & 0 & 0 & 1	
\end{pmatrix},
$$}

\merde{where $d$ and $e$ are (column) vectors in $\R^{d-2}$. Finally, since $CB$ centralizes $P_{d-1}^0$ it also preserves both $C$ and $C^\ast$ and so both $d$ and $e$ must be zero, and so $CB$ is of the form \eqref{e:pn-1centralizer}. The result then follows by observing that elements of the form \eqref{pn-1centralizer} form a group of which $C$ is an element. }
\end{proof}

\section{Bending}\label{s:bending}

\par{
Let $M$ be an \merde{orientable} finite volume hyperbolic $d$-manifold and $\Sigma$ an embedded finite-volume totally geodesic hypersurface. We denote the fundamental groups of $M$ and $\Sigma$ by $\Gamma$ and $\Delta$, respectively. In this section we will show how to construct a family of properly convex projective structure on $M$ by ``bending'' along $\Sigma$. More information about bending and its relationship to projective structure can be found in \cite{JoMi} and \cite{Mar}. By Mostow rigidity there is a unique (up to isometry) hyperbolic structure on $M$ and so we get a discrete and faithful representation $\rho_0:\Gamma\to \PSO(Q_d)$ (unique up to conjugacy \merde{in ${\rm PO}(Q_d)$}) from the holonomy of this structure. We will henceforth use this structure to identify $\widetilde{M}$ with $\HH^d$ and $\Gamma$ with a subgroup of $\PSO(Q_d)$. Furthermore, by assuming that we have choosen a base point $\tilde{x} \in \HH^d$ whose projection to $M$ is contained in $\Sigma$ and that the lift of $\Sigma$ containing $\tilde{x}$ is $\HH^{d-1}_0$ we may assume that $\Delta$ is 
a subgroup of $\PSO(Q_d; d-1,1)$.  
}

\subsection{Bending at the level of representations}

\par{
We first describe the bending construction at the level of representations. The construction depends on whether or not the hypersurface $\Sigma$ is separating. }
\\
\par{
If $\Sigma$ is separating then $M\backslash \Sigma$ has two components $M_1$ and $M_2$ with fundamental groups $\Gamma_1$ and $\Gamma_2$. Furthermore, we can decompose $\Gamma$ as the amalgamated free product 
}
\begin{equation}\label{amalprod}
 \Gamma = \Gamma_1\ast_{\Delta}\Gamma_2
\end{equation}

\par{
The representation $\rho_0$ gives rise to two representations $\rho_0^i:\Gamma_i\to \PSO(Q_d)$ given by restricting $\rho_0$ to $\Gamma_i$ for $i=1,2$. We define two families of representations of $\Gamma_1$ and $\Gamma_2$, respectively, as follows. Let $\rho_t^1=\rho_0^1$ and let $\rho_t^2=c_t\rho_0^2c_t^{-1}$, where $c_t$ is the element defined in \eqref{e:c_t}. Since $c_t$ belong to $C_{d-1}$ the identity component of the centralizer $\mathcal{Z}(\PSO(Q_d; d-1,1))$ of $\PSO(Q_d; d-1,1)$, these two families of representations agree on $\Delta$ and thus give a family of representations $\rho_t:\Gamma\to \PGL_{d+1}(\RR)$. 
}
\\
\par{
If $\Sigma$ is non-separating then $M\backslash \Sigma$ has a single component $M_\Sigma$ with fundamental group $\Gamma_\Sigma$ and we can write $\Gamma$ as the following HNN extension:
}

\begin{equation}\label{HNN}
 \Gamma = \Gamma_\Sigma\ast_s
\end{equation}
\par{
where $s$ is the stable letter. We can define a family of representations $\rho_t:\Gamma\to \PGL_{d+1}(\RR)$ as follows.  We define $\rho_t$ to be equal to $\rho_0$ when restricted to $\Gamma_\Sigma$ and equal to $c_t \rho_0(s)$ when restricted to the stable letter. Since $c_t$ centralizes $\rho_0(\Delta)$ this gives a well defined family of representations $\rho_t:\Gamma\to \PGL_{d+1}(\RR)$.
}

\subsection{Bending at the level of projective structures}

\par{
In this section we show, these two families of deformations defined by bending are both holonomies of projective structures on $M$ arising from a certain type of projective deformation. Let $\widetilde \Sigma$ be the union of all the lifts of $\Sigma$ to $\HH^d$. Recall that the hyperplane $\HH^{d-1}_{0}$ is one such lift 
}

\par{
We begin with the case where $\Sigma$ separates $M$ into $M_1$ and $M_2$. For $i\in \{1,2\}$ let $N_i=M_i\cup \Sigma$.  Let $\widetilde{N_i}$ be the copy of the respective universal cover of $N_i$ in $\HH^d$ that contains $\HH^{d-1}_0$ in its boundary.  Combinatorially, $\widetilde M$ can be described 
$$\widetilde M=(\Gamma\times \widetilde N_1)/\Gamma_1\sqcup (\Gamma\times \widetilde N_2)/\Gamma_2,$$

\noindent  where $\alpha\in\Gamma_i$ acts on $\Gamma\times \widetilde N_i$ by $\alpha\cdot(\gamma,p)=(\gamma\alpha^{-1},\alpha\cdot p).$ Additionally, if $p\in \widetilde N_1\cap \widetilde N_2=\HH^{d-1}_0$ then we identify the point $(\gamma,p)\in \Gamma\times \widetilde N_1$ with the point $(\gamma,p)\in \widetilde N_2$. The action of $\Gamma$ on $\widetilde M$ is given by 

\begin{equation}\label{e:amalaction}
 \gamma\cdot[(\gamma',p)]=[(\gamma\gamma',p)] {\rm\ for\ } \gamma\in \Gamma {\rm\ and\ } [(\gamma',p)]\in \widetilde M
\end{equation}
With this description of the universal cover, the developing map is easy to describe. Let $D_0:\HH^d\to \RP^d$ be the developing map for the complete hyperbolic structure on $M$ and let $c_t\in \PGL_{d+1}(\RR)$ be the element from \eqref{e:c_t}. Define a new developing map $D_t:\HH^d\to \RP^d$ by  

\begin{equation}\label{e:amalbenddev}
D_t([(\gamma,p)])=\left\{\begin{matrix}
                  \rho_t(\gamma)D_0(p) & \rm{ if } & p\in \widetilde N_1 \\
                  \rho_t(\gamma)c_tD_0(p) & {\rm if } & p\in \widetilde N_2
                 \end{matrix}\right.
\end{equation}

\noindent It is a simple exercise to verify that $D_t$ is well defined and $\rho_t$-equivariant. 
}

\par{
The case where $\Sigma$ is non-separating can be treated similarly. Let $N=\overline{M_\Sigma}$ and observe that there are two components of the universal cover of $N$ in $\HH^d$ that contain $\HH^{d-1}_0$ and we can order these lifts so that $\rho_0(s)$ takes the first lift to the second lift. With this convention we let $\widetilde N$ be the first of the two lifts. The universal cover of $M$ can again be described combinatorially as 
$$\widetilde M=(\Gamma\times \widetilde N)/\Gamma_\Sigma,$$

\noindent where $\alpha\in \Gamma_\Sigma$ acts by $\alpha\cdot(\gamma,p)=(\gamma\alpha^{-1},\alpha\cdot p)$. The action of $\Gamma$ on $\widetilde M$ is given by 
\begin{equation}\label{e:HNNaction}
 \gamma\cdot[(\gamma',p)]=[(\gamma\gamma',p)] {\rm\ for\ } \gamma\in \Gamma {\rm\ and\ } [(\gamma',p)]\in \widetilde M.
\end{equation}

\noindent The new developing map $D_t:\HH^d\to \RP^d$ is given by
\begin{equation}\label{e:HNNbenddev}
 D_t([(\gamma,p)])=\rho_t(\gamma)D_0(p).
\end{equation}

\noindent It is again easily verified that $D_t$ is well defined and $\rho_t$-equivariant. 
}

\par{
As a result, we have constructed a family of projective structures with developing/holonomy pair $\Mc_t = (D_t,\rho_t)$ which we call \emph{bending of $M$ along $\Sigma$}. By work of \cite[Lem.\ 5.4 and Lem.\ 5.5]{JoMi} it is known that for $t\neq 0$ these projective structures are not hyperbolic, but thanks to the following theorem it is known that they remain properly convex.

}

\begin{theorem}\label{t:prop_conv}\cite[Theorem 3.7]{Mar}
Let $(\Mc_t)_{t\in \R}$ be the bending of $M$ along $\Sigma$. The projective structure $\Mc_t$ on $M$ is properly convex.
\end{theorem}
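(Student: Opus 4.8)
The plan is to prove that each $\Mc_t$ is a properly convex projective structure on $M$ by exhibiting $\rho_t(\Gamma)$ as a discrete group preserving a properly convex open domain $\O_t \subset \RP^d$, into which $D_t$ is a diffeomorphism onto a convex subset. The natural strategy is to build $\O_t$ by gluing together translates of the convex pieces $D_t(\widetilde N_i)$ (or $D_t(\widetilde N)$ in the non-separating case) along their common totally geodesic faces, and to show that the resulting union is convex. First I would observe that $D_0(\widetilde N_i)$ is the intersection of $\HH^d$ with a closed half-space bounded by the hyperplane $P(\{x_2 = 0\})$ supporting $\HH^{d-1}_0$; since $c_t$ fixes that hyperplane and fixes $\HH^{d-1}_0$ pointwise (it centralizes $\PSO(Q_d;d-1,1)$), the piece $c_t D_0(\widetilde N_2)$ is a convex region lying on the opposite side of the \emph{same} hyperplane as $D_0(\widetilde N_1)$, so that the two pieces meet cleanly along $\HH^{d-1}_0$ and their union across that single wall is again convex. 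One then propagates this: every wall of $\widetilde\Sigma$ separates $\widetilde M$ into two half-spaces, and bending replaces the hyperbolic reflection-type gluing data across each wall by a $c_t$-twisted gluing. The key point is that the hyperplane of $\RP^d$ spanned by each lift of $\Sigma$ is preserved and the two adjacent developed pieces sit in opposite closed half-spaces bounded by that hyperplane.

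Concretely, I would set up a combinatorial/inductive argument. Enumerate the walls of $\widetilde\Sigma$; these cut $\HH^d$ into pieces indexed by the Bass--Serre tree $T$ of the splitting \eqref{amalprod} or \eqref{HNN}. The developing map $D_t$ is, piece by piece, a composition of $D_0$ with an element of $\PGL_{d+1}(\RR)$ that is a product of $\rho_t$-images and powers of $c_t$ along the edge-path in $T$; call the image piece $\O_v$ for $v \in T$. I would show by induction on combinatorial distance in $T$ from a base vertex that the union $\bigcup_{d(v,v_0)\leq n} \overline{\O_v}$ is properly convex. The inductive step adds one piece $\O_w$ across a single wall $W$ to a convex set $K$ whose boundary near $W$ lies in the supporting hyperplane $H_W$ of $W$; since $\O_w$ lies in the opposite closed half-space of $H_W$ and is itself convex with $H_W$ a supporting hyperplane, $K \cup \overline{\O_w}$ is convex, and it remains properly convex provided a uniform transverse hyperplane can be found — this is where one invokes that the whole configuration stays in a bounded region of a single affine chart, using that the $c_t$-conjugation is a small perturbation and the hyperbolic structure is already properly convex.

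The main obstacle, and the crux of the argument, is proper convexity of the infinite union $\O_t = \bigcup_{v \in T}\O_v$: convexity of each finite union does not automatically give a \emph{properly} convex (i.e.\ bounded in some affine chart, containing no full affine line) open set in the limit, and one must rule out the union ``opening up'' to all of an affine chart or to a non-properly-convex region. I expect this is handled by producing, uniformly in $v$, a projective hyperplane disjoint from every $\overline{\O_v}$ — equivalently a point in the interior of every dual cone $\mathcal{C}_{\O_v}^\ast$ — perhaps by a compactness argument over the (finitely many $\Gamma$-orbits of) walls together with the contravariance Proposition \ref{p:compa_hilbert} and the fact that $t$ lies in a compact interval and $\rho_0(\Gamma)$ already divides/quasi-divides a properly convex $\HH^d$. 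Once $\O_t$ is known to be open, properly convex, and $\rho_t(\Gamma)$-invariant with $D_t: \HH^d \to \O_t$ an equivariant diffeomorphism onto a convex (indeed, by the construction, all of) $\O_t$, the projective structure $\Mc_t$ is by definition properly convex, completing the proof.

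Since this is stated in the excerpt as \cite[Theorem 3.7]{Mar}, I would in practice simply cite that reference; the above is the shape of the argument one would reconstruct, and the delicate step to watch is precisely the passage from ``each finite gluing is convex'' to ``the full developed image is properly convex.''
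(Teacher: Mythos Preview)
The paper does not give its own proof of this statement: it is stated with the citation \cite[Theorem 3.7]{Mar} and nothing more. Your instinct at the end of the proposal---to simply cite that reference---is therefore exactly what the paper does, and is the correct disposition of this theorem in this context.

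That said, your sketch of the underlying argument is broadly faithful to the strategy carried out in \cite{Mar}: one does indeed use the Bass--Serre tree of the splitting to organize the pieces $\O_v$, observe that $c_t$ fixes the hyperplane $\{x_2=0\}$ pointwise so adjacent bent pieces lie in opposite closed half-spaces of the common supporting hyperplane and glue convexly, and then pass to the infinite union. You have also correctly isolated the genuinely delicate step, namely showing that the increasing union $\bigcup_v \O_v$ remains \emph{properly} convex rather than exhausting an affine chart. One refinement worth flagging: in your inductive step you describe adding a single piece $\O_w$ across a wall $W$ to a convex $K$ ``whose boundary near $W$ lies in the supporting hyperplane $H_W$''; in fact $K$ is bounded by many walls simultaneously, and one must check that the new piece does not protrude past any of the \emph{other} supporting hyperplanes of $K$. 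This is where the argument uses more than the local picture at $W$, and in \cite{Mar} it is handled by controlling the dual convex sets and finding a common interior point of all $\mathcal C_{\O_v}^\ast$ (equivalently, a single projective hyperplane missing every piece), essentially the approach you gesture at. Your description of this step as a ``small perturbation'' argument is not quite right---the result holds for all $t\in\RR$, not just small $t$---so the uniform dual point must be produced by a structural argument rather than a perturbative one.
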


\section{Geometry of the ends}\label{s:ends}

\par{
In this section we give a detailed description of the ends of the manifolds obtained by bending. The section begins by describing the geometry of two different types of ends. We then proceed to show that (up to passing to a finite sheeted cover) these are the only two types of ends that can arise in manifolds obtained by bending. The main component of this is Theorem \ref{t:holo_end_standorbend}.  
}

\subsection{Standard and bent cusps}\label{s:generalizedcusps}

In this section we describe in detail the geometry of two different types of ends. It should be noted that these types of ends are specific instances of \emph{generalized cusps}, which were introduced by Cooper--Long--Tillmann in \cite{CLT15}.

\subsubsection*{Standard cusps}

We begin by letting $\Lambda$ be a lattice in the $(d-1)$-dimensional Lie group $P_d$. Let $\A$ be the affine patch corresponding to $[e_{d+1}^\ast]$, $\A$ is diffeomorphic to $\RR^{d}\cong \RR\times \RR^{d-1}$ with affine coordinate $(x,v)$, where $x\in \RR$ and $v\in \RR^{d-1}$. \merde{The first factor in this decomposition is called the \emph{vertical direction} note that in all affine figures (e.g.\ Figure \ref{fig:domain_3d}) that the vertical direction is vertical.} For $c\in \RR$ we can define the function $f_c:\RR^{d-1}\to \RR$ by $v\mapsto \frac12 \abs{v}^2+c$.

In these coordinates the paraboloid model of $\HH^d$ can be realized as the epigraph of $f_0$. Furthermore, each hyperbolic horosphere (resp.\ horoball) centered at $\infty$ is given by the graph (resp.\ epigraph) of $f_c$ for some $c>0$. These horospheres give us a foliation of $\HH^d$ by convex hypersurfaces. This foliation is preserved leafwise by the action of $\Lambda$ (each leaf is the $P_d$ orbit of some point).  \merde{Varying the vertical coordinate} in this product structure gives another foliation of $\A$ by lines passing through $\infty$. The group $\Lambda$ also preserves this foliation.

These two foliations are transverse to one another and the space of these lines can be identified with the second factor of the product structure. The action of $\Lambda$ on the space of lines is by euclidean translations. Projection onto the second factor also endows each of the horospheres with a euclidean structure. Thus $\HH^d/\Lambda \cong T^{d-1}\times(0,\infty)$ and the torus fibers $T^{d-1}$ are euclidean. This is nothing but a projective version of a familiar construction from hyperbolic geometry. We call a manifold of the form $\HH^d/\Lambda$ a \emph{standard torus cusp} and a manifold of the form $\HH^d/\Lambda'$, where $\Lambda'$ contains $\Lambda$ as a finite index normal subgroup, a \emph{standard cusp}.

\subsubsection*{Bent cusps}\label{s:bentcusps}

Next, let $\Lambda$ be a lattice in the $(d-1)$-dimensional Lie group $B_d\subset \PGL_{d+1}(\RR)$ consisting of elements of the form 
\begin{equation}\label{e:Bd}\begin{pmatrix}
     1 & 0 & v^t & \frac{\abs{v}^2}{2}-b\\
    0 & e^b& 0 & 0\\
    0 & 0 & I & v\\
    0 & 0 & 0 & 1
  \end{pmatrix}
\end{equation}
where $b\in \RR$ and $v\in \RR^{d-2}$. The group $B_d$ preserves $\A$, which we now realize as $\RR\times \RR\times \RR^{d-2}$ with affine coordinated $(x,y,v)$, where $x,y\in \RR$ and $v\in \RR^{d-2}$. \merde{Again, the first coordinate is called the \emph{vertical direction}.} Let $c\in \RR$ and define $g_c:\RR^+\times \RR^{d-2}\to \RR$ by $(y,v)\mapsto \frac12 \abs{v}^2-\log(y)+c$. Let $\Bc^d$ be the epigraph of $g_0$. The graphs of $g_c$ for $c>0$ give a foliation of $\Bc^d$ by strictly convex hypersurfaces. The Hessian of $g_c$ is positive definite at each point in its domain and so we get that $\Bc^d$ is convex. It is not hard to see that $\Bc^d$ is properly, but not strictly convex. In particular, the domain $\Bc^d$ contains a unique segment in its boundary, which in these coordinates is the segment $[e_1,e_2]$. We henceforth refer to $[e_1]$ as $p_\infty^+$, $[e_2]$ as $p_\infty^-$, and $[e_1,e_2]$ as $s_\infty$. 

We call the graphs (resp.\ epigraphs) of the $g_c$ \emph{horospheres centered at $s_\infty$} (resp.\ \emph{horoballs centered at $s_\infty$}). Again, the leaves of this foliation are $B_d$ orbits and thus this foliation is preserved leafwise by $\Lambda$. The lines coming from the first factor of the product structure are concurrent to $p_\infty^+$ and give a foliation of $\Bc^d$ which is preserved by $\Lambda$ and this foliation by lines is again transverse to the foliation by horospheres.

\begin{center}
 \begin{figure}
  \includegraphics[scale=.6]{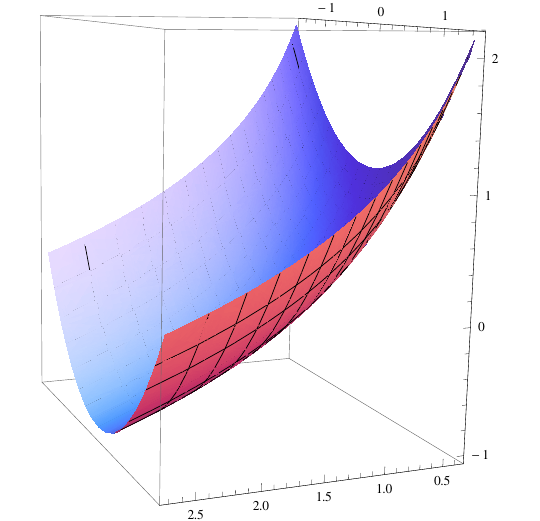}
  \caption{The domain $\Bc^3$}
  \label{fig:domain_3d}
 \end{figure}
\end{center}

The space of lines can be identified with a subset of the product of the second and third factors, the action of $\Lambda$ on the space of lines is by affine transformations, but is no longer by euclidean isometries. More precisely, the action on the third factor is by euclidean translations and the action on the second factor is by homothety. Projection to the space of lines endows the horospheres with an affine structure. The quotient $\Bc^d/\Lambda$ is still diffeomorphic to $T^{d-1}\times (0,\infty)$, but now the torus sections $T^{d-1}$ are affine, but no longer euclidean. We call a manifold of the form $\Bc^d/\Lambda$ a \emph{bent torus cusp} and a manifold of the form $\Bc^d/\Lambda'$, where $\Lambda'$ contains $\Lambda$ as a finite index normal subgroup, a \emph{bent cusp}.  
Next, we discuss some interesting Lie subgroups of $B_d$ as well as their orbits. First, let $H_{di}$ be 1-dimensional subgroup of $B_d$ consisting of elements such that $v=0$ (see \eqref{e:Bd}). We refer to $H_{di}$ as the \emph{group of pure dilations} and to its non-trivial elements as \emph{pure dilations}. Let $\gamma$ be a pure dilation such that $b< 0$ (see \eqref{e:Bd}), then $p_\infty^-$ is a repulsive fixed point of $\gamma$ and $p_\infty^+$ is an attractive fixed point of $\gamma$. If $x\in \partial\Bc^d\backslash s_\infty$, then the curve $(\g^t(x))_{t\in \R}\cup s_\infty$ is the boundary of a two dimensional convex subset, $\omega_x$, of $\Bc^d$, see Figure \ref{f:omega_x}. 

Next, let $H_{tr}$ be the $(d-2)$-dimensional subgroup of $B_d$ consisting of elements such that $b=0$. We refer to $H_{tr}$ as the \emph{group of pure translations} and to its non-trivial elements as \emph{pure translations}. The group of pure translations acts trivially on $s_\infty$. Furthermore, for any point $x\in \partial \Bc^d\backslash s_\infty$, the $H_{tr}\cdot x\cup p_\infty^+$ is the boundary of a totally geodesic copy of $\HH^{d-1}$ in $\Bc^d$.

To summarize, we see that every cross section of $\Bc^d$ \merde{with a 2-plane containing $s_\infty$} is of the form $\omega_x$ for some $x\in \Bc^d$. Furthermore, every cross section of $\Bc^d$ with an hyperplane that contains $p_\infty^+$ and transverse to $s_\infty$ is a $(d-1)$-dimensional ellipsoid (provided the cross section is non-empty).

\begin{center}
 \begin{figure}
  \includegraphics[scale=.3]{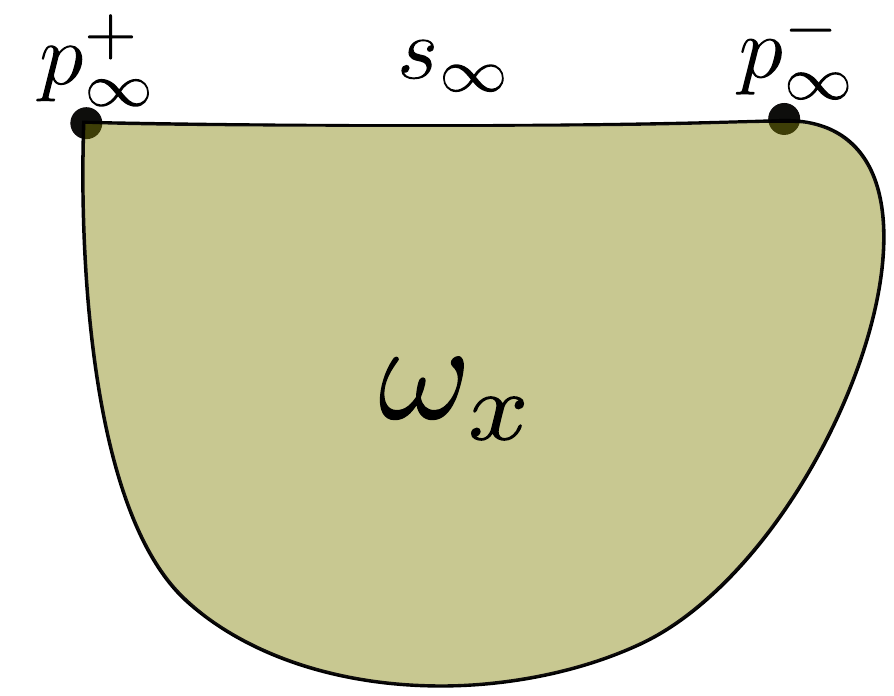}
  \caption{The domain $\omega_x$.} 
    \label{f:omega_x}
 \end{figure}
\end{center}

\subsection{Volumes of cusp neighborhoods}
In this section we show that the cusp neighborhoods defined in the previous section have finite Busemann volume. The precise statement is as follows:

\begin{theorem}\label{t:fvcuspnbhd}
Let $\O$ be either $\HH^d$ or $\Bc^d$, let $G$ be either $P_d$ or $B_d$ and let $\Hc \subset \O$ be a horoball (i.e. the convex hull of an orbit of $G$). If $\Lambda \subset G$ is a lattice then $\Lambda$ preserves $\Hc$ and $\Hc/\Lambda$ is a properly convex submanifold of $\O/\Lambda$. If $d\geqslant 3$ then $\Hc/\Lambda$ is a finite volume submanifold of $\O/\Lambda$. 
\end{theorem}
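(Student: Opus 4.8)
The plan is to treat the two cases ($\O = \HH^d$ with $G = P_d$, and $\O = \Bc^d$ with $G = B_d$) in parallel, exploiting the explicit product descriptions of both cusps given in Section \ref{s:generalizedcusps}. First I would record the easy structural facts: since $\Hc$ is the convex hull of a $G$-orbit and $\Lambda \subset G$, the group $\Lambda$ preserves $\Hc$; properness of $\Hc$ follows because $\Hc \subset \O$ and $\O$ is properly convex; and since $\Lambda$ acts properly discontinuously and freely (being a lattice in the simply connected nilpotent-or-solvable Lie group $G$, which is torsion-free) on the convex set $\Hc$, the quotient $\Hc/\Lambda$ is a properly convex manifold embedded in $\O/\Lambda$. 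The only real content is the finiteness of the Busemann volume $\mu_{\O}(\Hc/\Lambda)$, and for that it suffices to bound $\mu_{\O}$ from above by a measure whose integral over a fundamental domain is finite.

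The key step is a \emph{comparison argument} using Proposition \ref{p:compa_busemann}. In both cases the horoball $\Hc$ is the epigraph of a convex function ($f_0$ shifted, or $g_0$ shifted) over a slab, and the horospheres $\{f_c\}_{c>0}$ (resp. $\{g_c\}_{c>0}$) foliate it. I would pick a convenient larger properly convex domain $\O' \supset \Hc$ on which $\Lambda$ still acts with the same combinatorics — concretely, enlarge the epigraph by forgetting the constraint in one direction, e.g. replace $\Bc^d$ by a product of a paraboloid-type region with an affine line, or simply bound $\Hc$ inside a product $\RR \times (\text{base})$ on which the Hilbert/Finsler structure degenerates in a controlled way. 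Then $\mu_{\O} \leq \mu_{\O'}$ on Borel subsets of $\Hc$, and on $\O'$ the Busemann density factors (or is dominated by a product density) adapted to the product structure $T^{d-1} \times (0,\infty)$. Integrating in that product structure, the torus directions contribute a finite factor (compact fiber, $\Lambda$ acts cocompactly on each horosphere leaf since it is a lattice in $G$), and the remaining integral in the $(0,\infty)$-direction is $\int_1^\infty \phi(s)\, ds$ where $\phi(s)$ is the Hilbert-unit-ball volume ratio along the transverse foliation. The crucial estimate is that $\phi(s)$ decays like $e^{-\delta s}$ (exponentially in the Hilbert-distance parametrization of the foliation), exactly as in the hyperbolic case; this is where the hypothesis $d \geq 3$ enters, because in dimension $d=2$ the boundary segment $s_\infty$ of $\Bc^d$ (or the single cusp direction) makes the unit balls fail to shrink and the integral diverges logarithmically.

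I would carry this out concretely as follows: (i) parametrize $\Hc$ by $(u, w) \in (0,\infty) \times \RR^{d-1}$ where $w$ runs over a horosphere leaf and $u$ is a transverse coordinate, chosen so that $\Lambda$ preserves the slices $\{u = \text{const}\}$ and acts cocompactly on each; (ii) fix a fundamental domain $F = (0,\infty) \times K$ with $K$ compact; (iii) estimate the Hilbert unit ball $B^{\O}_z(1)$ at $z = (u,w) \in F$ from below in Lebesgue volume — in the leaf directions it has size comparable to the "width" of $\Hc$ there, which stays bounded below along $F$, while in the transverse direction the Finsler norm \eqref{e:finsler} grows because $z$ approaches the boundary component (the segment $s_\infty$ for $\Bc^d$, or $\infty$ for $\HH^d$) so $\mu_L(B^{\O}_z(1)) \gtrsim$ some positive function of $u$; (iv) conclude $\mu_{\O}(\Hc/\Lambda) = \int_F \frac{\alpha_d}{\mu_L(B^{\O}_z(1))}\, d\mu_L(z) \leq C \int_1^\infty e^{-\delta u}\, du < \infty$. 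The main obstacle I anticipate is step (iii): getting an honest lower bound on $\mu_L(B^{\O}_z(1))$ uniformly over the compact cross-section $K$ but growing in $u$, for the non-strictly-convex domain $\Bc^d$ where the geometry near the boundary segment $s_\infty$ is genuinely different from the smooth strictly convex case — one must check that points of $\Hc$ deep in the cusp stay a definite Hilbert-distance away from $s_\infty$ in the leaf directions while running off to infinity only in the transverse direction, so that the "bad" direction is one-dimensional and integrable. This is precisely the computation that the explicit form \eqref{e:Bd} of $B_d$ and the function $g_c$ are set up to make tractable.
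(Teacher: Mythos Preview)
Your plan in steps (i)--(iv) is essentially the paper's approach, and the structural claims (invariance, proper convexity, embedding) are fine. Two pieces of your heuristic, however, are off and would derail the execution.

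First, the comparison step via Proposition~\ref{p:compa_busemann} does not work as you describe it. A product with an affine line is not properly convex, and in any case enlarging $\O$ to $\O'\supset\O$ gives $\mu_{\O'}\leq\mu_{\O}$, the wrong direction for an upper bound. (To bound $\mu_{\O}$ from above one would need $\Hc\subset\O'\subset\O$, which is not the kind of product domain you suggest.) The paper dispenses with comparison entirely and estimates the Finsler unit ball in $\O=\Bc^d$ directly.

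Second, your qualitative picture of the unit ball is inverted. As $z_0=(x_0,y_0,v_0)$ goes into the cusp (that is, $x_0\to\infty$ with $(y_0,v_0)$ confined to a compact fundamental domain $K\subset\RR^+\times\RR^{d-2}$ for the affine $\Lambda$-action), the Finsler unit ball \emph{grows} in Lebesgue terms. The paper exhibits explicit tangent vectors
\[
w_1=(x_0,0,0),\qquad w_2=(0,\varepsilon,0),\qquad w_i=(0,0,\sqrt{x_0}\,v'_{i-3})\quad(3\le i\le d),
\]
computes their Finsler norms from \eqref{e:finsler}, and checks each lies in $B_{z_0}^{\O}(1)$ for $x_0$ large. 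Their convex hull is a simplex of Lebesgue volume $C_{d,\varepsilon}\,x_0^{d/2}$, so the Busemann density is $\lesssim x_0^{-d/2}$; integrating over the cone $[C,\infty)\times K$ gives $\int_C^\infty x_0^{-d/2}\,dx_0$, finite precisely when $d\geq 3$. The single direction in which the unit ball does \emph{not} grow is the $y$-direction toward the segment $s_\infty$, which is why $d=2$ fails (logarithmic divergence, as you correctly anticipated). No exponential estimate or Hilbert-distance reparametrization is needed; the polynomial decay $x_0^{-d/2}$ in the affine coordinate is what actually encodes the dimension restriction.
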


\begin{proof}
With the exception of the claim about $\Hc/\Lambda$ having finite volume when $d\geq 3$ the rest of the theorem follows from the discussion in the previous subsection. Furthermore, when $\O=\HH^d$ the Hilbert metric on $\O$ is equal to the hyperbolic metric and so in this case the Busemann volume coincides with the hyperbolic volume. In this case the fact that $\Hc/\Lambda$ is finite volume in $\O/\Lambda$ is a well known fact from hyperbolic geometry that follows from a simple computation.
 
Assume  now that $\O=\Bc^d$ and, \merde{using the coordinates from Section \ref{s:bentcusps}}, view $\Bc^d \subset \A\cong \RR\times \RR\times \RR^{d-2}$ with coordinates $(x,y,v)$ and recall that $\O$ is the epigraph of a function whose domain is $\RR^+\times \RR^{d-2}$. \merde{Recall that $B_{z_0}^{\O}(1)$ is the unit ball for Hilbert norm centered at the origin in $T_{z_0}(\O)\cong \A$}. The proof of this case is similar to \cite[Prop 3]{8knot_sam} and proceeds by showing that when $z_0=(x_0,y_0,v_0)$ with $x_0$ large that $B_{z_0}^{\O}(1)$ contains a simplex of Lebesgue volume comparable to $x_0^{d/2}$. Let $\mathcal{D}$ be a fundamental domain for the action of $\Lambda$ on $\O$. We can assume that $\mathcal{D}$ is the intersection of $\O$ with the cone over a compact set $C\subset\RR^+\times \RR^{d-2}$ with cone point $p_\infty^+$. The compact set $C$ can be taken to be a fundamental domain for the affine action of $\Lambda$ on $\RR^+\times \RR^{d-2}$.  
 
Let $z_0\in \mathcal{D}$ and \merde{recall that since $\O\subset \A$ that we can} identify $T_{z_0} \O$ with $\RR\times \RR\times \RR^{d-2}$. \merde{Specifically, we use} the coordinates $(x,y,v)$ from Section \ref{s:bentcusps}, \merde{and we put $z_0$ at the origin of the tangent space $T_{z_0} \O$.} Consider the \merde{vector} $w_1=(x_0,0,0)\in T_{z_0} \O$. A simple computation using \eqref{e:finsler} shows that 
 \begin{equation}\label{e:length1}
  \Abs{w_1}=\frac{x_0}{2x_0-\abs{v_0}^2+2\log y_0}.
 \end{equation}
Since $(y_0,v_0)\in C$ which is compact, we see from \eqref{e:length1} that $\Abs{w_1}<1$ for sufficiently large $x_0$ and so in this case $w_1\in B_{z_0}^{\O}(1)$. 

Next, let $w_2=(0,\varep,0)$, where $\varep>0$. Another simple computation shows that 
\begin{equation}\label{e:length2}
 \Abs{w_2}=\frac{\varep}{2\left(y_0-\exp \left(\frac{\abs{v_0}^2}{2}-x_0\right) \right)}.
\end{equation}
Since $(y_0,v_0)$ is confined to a compact set in $\RR^+\times \RR^{d-2}$ we see that for sufficiently small $\varep$ and sufficiently large $x_0$ that $w_2\in B_{z_0}^{\O}(1)$.

Next, \merde{extend $\{v_0\}$ to a basis for $\R^{d-2}$ and} perform the Gram-Schmidt process \merde{(with respect to the standard inner product on $\R^{d-2}$)} to obtain an orthonormal basis $\{v'_0,\ldots v'_{d-3}\}$ of $\RR^{d-2}$ and let $w_i=\left(0,0,\sqrt{x_0}v_{i-3}'\right)$ for $3\leqslant i\leqslant d$. Another computation \merde{using \eqref{e:finsler}} shows that
\begin{equation}\label{e:length3}
 \Abs{w_3}=\frac{\sqrt{2(x^2_0+x_0\log y_0})}{2(x_0+\log y_0)-\abs{v_0}^2}
\end{equation}
and 
\begin{equation}\label{e:length4}
 \Abs{w_i}
 =\frac{\sqrt{x_0}}{\sqrt{2(x_0+\log y_0-\frac{1}{2}\abs{v_0}^2)}}
\end{equation}
for $4\leqslant i\leqslant d$. Again, since $(y_0,v_0)$ is constrained to a compact set, we see that for large values of $x_0$ that $w_i\in B_{z_0}^{\O}(1)$ for $3\leq i\leq d$.

We now see that for sufficiently large $x_0$ that $\{(0,0,0),w_1,\ldots, w_d \}\subset B_{z_0}^{\O}(1)$. Let $S$ be the simplex formed by taking the convex hull of this set. Since $B_{z_0}^{\O}(1)$ is the unit ball of a norm it is convex and thus contains the simplex, $S$. The Lebesgue measure of $S$ is easily computed as $C_{d,\varep} x_0^{d/2}$, where  $C_{d,\varep}$ is a constant depending only on $d$ and $\varep$

As a result we see that there is a compact set $K\subset \mathcal{D}$ such that for $z_0\in \mathcal{D}\backslash K$ there is a simplex in $B_{z_0}^{\O}(1)$ of volume at least $C_{d,\varep} x_0^{d/2}$. Therefore
$$\mu_{\O/\Lambda}(\Hc/\Lambda)=\mu_{\O}(\mathcal{D})=\int_{K}\frac{\alpha_d}{\mu_L(B_{z}^{\O}(1))}d\mu_L(z)+\int_{\mathcal{D}\backslash K}\frac{\alpha_d}{\mu_L(B_{z}^{\O}(1))}d\mu_L(z)$$
$$\leqslant\int_{K}\frac{\alpha_d}{\mu_L(B_{z}^{\O}(1))}d\mu_L(z)+\int_{\mathcal{D}\backslash K}\frac{\alpha_d}{C_{d,\varep} x^{d/2}}d\mu_L(z)<\infty$$

\end{proof}

\begin{remark}
 If $d=2$ and $\O=\HH^2$ then $\Hc/\Lambda$ is a finite area submanifold of $\HH^2/\Lambda$. Conversely, if $d=2$ and $\O=\Bc^2$ then $\Hc/\Lambda$ is an infinite area submanifold of $\Bc^2/\Lambda$ (See \cite{Surf_ludo}). 
\end{remark}

\section{Classification of the ends}\label{s:bent_ends}
This section is dedicated to understanding the ends of manifolds that arise by bending. Specifically, we show that the ends of a properly convex manifold obtained from bending a finite volume hyperbolic manifold along a finite volume totally geodesic hypersurface are finitely covered by either a standard torus cusp or a bent torus cusp (Theorem \ref{t:holo_end_standorbend}). We close this section by showing that the manifolds obtained by bending will always have finite Busemann volume (Theorem \ref{t:bendingfinitevolume}). Recall that $\mathcal{M}_t=(\O_t,\G_t)$ is the family of properly convex projective structures obtained by bending $M$ along $\Sigma$. 

\subsection{Classification of the ends}
\par{
The goal of this subsection is to show that each end of a manifold obtained by bending is (up to passing to a finite sheeted cover) either a standard torus cusp or a bent torus cusp. We begin by describing the topology of the ends of $M$ as well as their intersection with the totally geodesic hypersurface $\Sigma$. 
}
\par{
We recall that a manifold without boundary $M$ is \emph{topologically tame} when it is the interior of a compact manifold $\overline{M}$. In that case, the union $\Pc$ of all the conjugates of the fundamental groups of the connected components of the boundary of $\overline{M}$ is called \emph{the family of the peripheral subgroups of $M$}.
}
\par{
It is well known that finite volume hyperbolic manifolds are topologically tame. We let $\{T_i\}_{i=1}^k$ denote the boundary components of $\overline{M}$, which we refer to as \emph{cusp cross sections}. Each of them is a flat $(d-1)$-manifold, i.e a manifold that admit a metric with constant sectional curvature equal to zero, see the first paragraph of \ref{s:generalizedcusps}.
}

\par{
Let $T$ be one such cusp cross section and let $\Gamma_\infty$ be a fixed \emph{peripheral subgroup for $T$}, i.e.\ a fixed representative of the conjugacy class of $\pi_1(T)$ in $\Gamma=\pi_1(M)$. After conjugating by an element of $\PSO(Q_d)$ we can assume that $\Gamma_\infty$ fixes $\infty\in \partial \Hb^d$.  
}
\par{
Since $\Sigma$ is also a finite volume hyperbolic manifold it is also tame and has a finite set $\{D_i\}_{i=1}^l$, of cusp cross sections which are $(d-2)$-dimensional flat manifolds. Suppose one of the cusp cross sections of $\Sigma$ intersects $T$. Without loss of generality assume that it is $D_1$ and let $\Delta_\infty$ be a fixed peripheral subgroup for $D_1$. By choosing $\Delta_\infty$ appropriately we can assume that $\Delta_\infty\subset \Gamma_\infty$. 
}
\par{
It is possible for another cusp cross section, say $D_2$, of $\Sigma$ to intersect $T$. Since $\Sigma$ is embedded in $M$ we see that $D_1$ and $D_2$ are \emph{parallel} in the sense that the universal covers of $D_1$ and $D_2$ are parallel hyperplanes in the universal cover of $T$ which is $\RR^{d-1}$ with the usual euclidean structure. Thus we see that $D_1$ and $D_2$ are freely homotopic in $M$ and thus have fundamental groups which are conjugate in $\Gamma$ (but not in $\Delta = \pi_1(\Sigma)$).
}

\par{
In order to understand the structure of the ends we first show that (up to conjugacy) the group $\rho_t(\Gamma_\infty)$ is highly constrained. Specifically, we show that $\rho_t(\Gamma_\infty)$ is virtually a lattice in one of the two $(d-1)$-dimensional abelian Lie groups $P_d$ or $B_d$.
}
\par{
Specifically, if we let $\Gamma_\infty^{Tr}$ be the finite index subgroup consisting of parabolic translations of $\Gamma_\infty$ we show that a conjugate of $\rho_t(\Gamma_\infty^{Tr})$ is contained in one of the aforementioned abelian Lie groups.
}

In the proof of Theorem \ref{t:holo_end_standorbend} we encounter two additional Lie groups 

\begin{equation}\label{e:P'}
P'_d = \left\{
\begin{pmatrix}
   1 & a & u^t & \frac{-a^2+|u|^2}{2}\\
    & 1 & 0 & -a\\
    &  & I & u\\
    &  &  & 1
  \end{pmatrix}
  :
a\in \RR,\, u\in \RR^{d-2}
\right\}
\end{equation}

and 

\begin{equation}\label{e:B'}
B'_d = \left\{
\begin{pmatrix}
   1 & 0 & u^t & \frac{|u|^2}{2}+t\\
    & e^t & 0 & 0\\
    &  & I & u\\
    &  &  & 1
  \end{pmatrix}
  :
t\in \RR,\, u\in \RR^{d-2}
\right\}
\end{equation}
Note that both $P'_d$ and $B'_d$ contain $P^0_{d-1}$ as a codimension 1 Lie subgroup. 

\begin{remark}
We stress a difference between $P_d$ and $P'_d$. We recall that $P_d$ preserves the quadratic form $Q_d$ of signature $(d,1)$ defined in \ref{quadform}. Furthermore, a simple computation shows that $P'_d$ also preserves a quadratic form $Q'_d$ defined on $\RR^{d+1}$, of signature $(d-1,2)$ given by: 
\begin{equation}\label{quadformprime}
 -x_2^2+x_3^2+\ldots x_d^2-2x_1x_{d+1}
\end{equation}
Recall that $\A$ is the affine patch corresponding to $[e_{d+1}^*]$. If we look first at the orbit of a point $x$ in $\A$ under $P'_d$ in the inhomogeneous coordinates obtained by setting $x_{d+1}=1$, we get that the orbit of $p=(p_1,\cdots,p_d)\in \A$ is the $(d-1)$-quadric hypersurface 
$$
S =\{\, x=(x_1,\cdots,x_d) \in \A \,\,|\,\, -x_2^2+x_3^2+\ldots x_d^2-2x_1 = Q'_d(p) \, \}
$$
This quadric hypersurface $S$ is a hyperbolic paraboloid and hence its convex hull in $\A$ is all of $\A$.
\end{remark}

Using the following lemma we can rule out the possibility that $\rho_t(\Gamma_\infty^{Tr})$ is a lattice in either of these Lie groups by showing that neither of $P'_d$ nor $B'_d$ contains a lattice that preserves a convex domain. 

\begin{lemma}\label{l:prime_not_conv}
Let $\Lambda$ be a lattice in $P'_d$ or $B'_d$. If $\Omega$ is an open convex set \merde{preserved} by $\Lambda$ then $\O$ contains an affine line. Consequently, such a lattice does not preserve a properly convex open subset of $\R\Pd$.
\end{lemma}

\begin{proof}
Suppose first that $\Lambda$ is a lattice of $P'_d$. Since $\O$ is open it must contain a point $p\in \A$. For the present time we will regard $\O$ as a subset of $\Sb^d$. From \eqref{e:P'} we see that each $\gamma\in P'_d$ is determined by a pair $(a,u)\in \R\times \R^{d-2}$, and we denote the corresponding element $\gamma_{(a,u)}$. Since $\Lambda$ is a lattice we can find a sequence $(\alpha_n:=\alpha_{(a_n,u_n)})_{n\in \N}$ such that the sequence $(a_n)_{n\in \N}$ is bounded and $(\abs{u_n})_{n\in \N}$ diverges to $\infty$. A simple computation shows that $(\alpha_n\cdot p)_{n\in\N}$ converges to $[e_1]$, and so $[e_1]\in\dO$. On the other hand, we can also find a sequence of elements $(\beta_n:=\beta_{(a_n,u_n)})_{n\in \N}$ in $\Lambda$ such that $(\abs{u_n})_{n\in \N}$ is bounded and $(\abs{a_n})_{n\in \N}$ diverges to $\infty$. Again, it is easy to see that $(\beta_n\cdot p)_{n\in \N}$ converges to $[-e_1]$, and so $[-e_1]\in \dO$. By convexity, we see that $\O$ must contain an entire affine line connecting $[e_1]$ and $[-e_1]$. This contradicts the fact that $\O$ is properly convex.

Finally suppose that $\Lambda$ is a lattice of $B'_d$. Again it is better to work in the projective sphere $\Sb^d$. Since $\O$ is open it contains a point $p\in \A\smallsetminus [\ker e_2^\ast]$. From \eqref{e:B'} we see that each $\gamma\in B'_d$ is determined by a pair $(t,u)\in \R\times \R^{d-2}$, and we denote the corresponding element $\gamma_{(t,u)}$. Since $\Lambda$ is a lattice we can find a sequence $(\alpha_n:=\alpha_{(t_n,u_n)})_{n\in \N}$ such that the sequence $(t_n)_{n\in \N}$ is bounded and $(\abs{u_n})_{n\in \N}$ diverges to $\infty$. A simple computation shows that $(\alpha_n\cdot p)_{n\in\N}$ converges to $[e_1]$, and so $[e_1]\in\dO$. On the other hand, we can also find a sequence of elements $(\beta_n:=\beta_{(t_n,u_n)})_{n\in \N}$ in $\Lambda$ such that $(\abs{u_n})_{n\in \N}$ is bounded and $(t_n)_{n\in \N}$ diverges to $-\infty$. Again, it is easy to see that $(\beta_n\cdot p)_{n\in \N}$ converges to $[-e_1]$, and so $[-e_1]\in \dO$. By convexity, we see that $\O$ must contain an entire affine line connecting $[e_1]$ and $[-e_1]$. Again this contradicts proper convexity of $\O$. 
\end{proof}


\begin{theorem}\label{t:holo_end_standorbend}
Let $(\Mc_t)_{t\in \R}$ be the bending of $M$ along $\Sigma$. Let $\G_\infty$ be a peripheral subgroup of $\G$. The holonomy $\rho_t(\G_\infty)$ is virtually a lattice in a conjugate of $P_d$ or $B_d$.
\end{theorem}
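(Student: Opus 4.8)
The plan is to analyze the holonomy $\rho_t(\G_\infty)$ of a peripheral subgroup by combining the constraint that it must preserve the properly convex domain $\O_t$ (Theorem \ref{t:prop_conv}) with the explicit structure of the bending deformation. First I would reduce to the translation subgroup: $\G_\infty$ contains a finite-index subgroup $\G_\infty^{Tr}$ of parabolic translations, and since $\rho_0(\G_\infty^{Tr})$ is a lattice in $P_d$ (conjugated so that it fixes $\infty$), it suffices to identify a conjugate of $\rho_t(\G_\infty^{Tr})$ inside a $(d-1)$-dimensional abelian Lie group, after which $\rho_t(\G_\infty)$ is virtually such a lattice. Then I would split into two cases according to whether the cusp cross section $T$ meets $\Sigma$.

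If $T$ is disjoint from $\Sigma$, then $\G_\infty$ lies entirely in one of the pieces $\Gamma_i$ (or $\Gamma_\Sigma$), and the bending definition \eqref{e:amalbenddev}/\eqref{e:HNNbenddev} shows $\rho_t$ restricted to $\G_\infty$ equals $\rho_0$ up to a fixed conjugation by $c_t$, so $\rho_t(\G_\infty^{Tr})$ is literally a conjugate of a lattice in $P_d$ and we are done. The substantive case is when $T \cap \Sigma \neq \varnothing$: here $\G_\infty$ intersects $\Delta = \pi_1(\Sigma)$ in the peripheral subgroup(s) $\Delta_\infty$ of the cusp cross sections $D_i$ of $\Sigma$ lying in $T$, and these satisfy $\Delta_\infty \subset \G_\infty^{Tr}$, with $\rho_t$ unchanged on $\Delta_\infty$ (since $c_t$ centralizes $\rho_0(\Delta)$). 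So $\rho_t(\G_\infty^{Tr})$ is a group containing the lattice-in-$P^0_{d-1}$ image $\rho_t(\Delta_\infty)$ as a codimension-one subgroup (or union of such), and every element of $\rho_t(\G_\infty^{Tr})$ centralizes $\rho_t(\Delta_\infty)$, hence lies in the centralizer $\mathcal{Z}(P^0_{d-1})$ computed in Lemma \ref{pn-1centralizer}. The key step is then to determine which one-parameter extensions of $P^0_{d-1}$ inside the block form \eqref{e:pn-1centralizer} can arise: a parabolic (or more generally non-diagonalizable unipotent-type) extension must land in one of the abelian groups $P_d$ or $P'_d$, and a semisimple extension must land in $B_d$ or $B'_d$. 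One shows $\rho_t(\G_\infty^{Tr})$ is abelian (it is virtually $\ZZ^{d-1}$ acting on a properly convex domain, hence its Zariski closure's identity component is abelian, or one argues directly from commutation relations in $\mathcal{Z}(P^0_{d-1})$), and that it normalizes, indeed centralizes, $\rho_t(\Delta_\infty)$; the possible abelian $(d-1)$-dimensional groups containing $P^0_{d-1}$ with this centralizing property are exactly $P_d$, $P'_d$, $B_d$, $B'_d$ up to conjugacy.

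Finally I would invoke Lemmas \ref{l:stand_prime_not_conv} and \ref{l:bent_prime_not_conv} to eliminate $P'_d$ and $B'_d$: since $\rho_t(\G_\infty^{Tr})$ preserves the properly convex open set $\O_t$, it cannot be a lattice in either $P'_d$ or $B'_d$, because such lattices force any invariant convex open set to contain an affine line. This leaves $\rho_t(\G_\infty^{Tr})$ virtually a lattice in a conjugate of $P_d$ or $B_d$, and passing back up to the finite-index overgroup $\G_\infty$ gives the statement. The main obstacle I anticipate is the case analysis in the middle step: carefully ruling out all the other one-parameter extensions of $P^0_{d-1}$ sitting inside the centralizer \eqref{e:pn-1centralizer}, i.e. showing that the only abelian $(d-1)$-dimensional subgroups of $\PGL_{d+1}(\RR)$ that both contain $P^0_{d-1}$ and centralize it are (conjugates of) $P_d, P'_d, B_d, B'_d$ — this requires a somewhat delicate examination of the entries $a, b, c, z$ in \eqref{e:pn-1centralizer} and of which extensions act with the right Jordan-block structure, together with the observation (from the non-hyperbolicity of $\rho_t$ for $t \neq 0$, via \cite{JoMi}) distinguishing the parabolic from the diagonalizable cases.
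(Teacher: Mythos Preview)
Your approach is essentially the paper's: reduce to $\G_\infty^{Tr}$, dispose of the case $T\cap\Sigma=\varnothing$, then use that $\rho_t(\G_\infty^{Tr})$ centralizes the lattice $\rho_t(\Delta_\infty^{Tr})\subset P_{d-1}^0$ to force any generator $\rho_t(\gamma)$ of the quotient $\G_\infty^{Tr}/\Delta_\infty^{Tr}\cong\ZZ$ into the block form \eqref{e:pn-1centralizer}, and eliminate $P'_d$, $B'_d$ via Lemmas \ref{l:stand_prime_not_conv} and \ref{l:bent_prime_not_conv}. The paper resolves your anticipated obstacle not by an abstract classification of abelian extensions of $P_{d-1}^0$, but by analyzing the action of $\rho_t(\gamma)$ on the invariant pencil $C^\ast$: a geometric argument (the element $\gamma$ permutes the lifts of $\Sigma$ in $\O_t$, which project to distinct points of $C^\ast$) shows this action is non-trivial and orientation-preserving, hence parabolic or hyperbolic; then explicit conjugations by elements in the centralizer (resp.\ normalizer) of $P_{d-1}^0$ bring $\rho_t(\gamma)$ into $P_d\cup P'_d$ (resp.\ $B_d\cup B'_d$) in the two cases. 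One correction: your final parenthetical about ``non-hyperbolicity of $\rho_t$ via \cite{JoMi}'' is misplaced --- nothing here needs to \emph{decide} between the parabolic and diagonalizable cases (both are admissible conclusions of the theorem), and no appeal to \cite{JoMi} enters this argument.
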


\begin{proof}
\par{
Let $T$ be a cusp cross section of $M$. We begin by analysing the following simple case. Suppose that no cusp cross section of $\Sigma$ intersects $T$ then $\G_\infty$ is contained in the fundamental group of a component of $M\smallsetminus \Sigma$ thus by construction $\rho_t(\Gamma_\infty)=\rho_0(\Gamma_\infty)$, and so $\rho_t(\Gamma_\infty^{Tr})$ is a lattice in $P_d$. 
}

\par{
Next, suppose that the cusp cross section of $\Sigma$ intersects $T$. Let $\Delta_\infty$ and $\Gamma_\infty$ be as before and let $\Delta_\infty^{Tr}$ be the subgroup of parabolic translations in $\Delta_\infty$. By construction of $\rho_t$, the group $\rho_t(\Delta_\infty^{Tr})$ is a lattice of $P_{d-1}^0$. Furthermore, the quotient $\Gamma^{Tr}_\infty/\Delta_\infty^{Tr}\cong \ZZ$. Let $\gamma$ be any element of $\Gamma_\infty^{Tr}$ that projects to a generator, $\overline{\gamma}$, in this cyclic quotient. The group $P_{d-1}^0$ \merde{preserves each hyperplane of} a unique pencil of hyperplanes $C^\ast$. Namely it preserves \merde{leafwise} the pencil of hyperplanes corresponding to the line in $\RP^{d \ast}$ spanned by $e_2^\ast$ and $e_{d+1}^\ast$, and in fact $P_{d-1}^0$ acts trivially on this pencil. Since $\Gamma_\infty^{Tr}$ is abelian we get that $\rho_t(\gamma)$ also preserves $C^\ast$.
}
\\
\par{
 The next lemma describes how the abelian Lie group in which $\rho_t(\Gamma_\infty^{Tr})$ is contained depends only on the dynamics of $\rho_t(\gamma)$ on $C^\ast$ and thus concludes the proof.
}
\end{proof}

\begin{lemma}\label{l:latticeliegroup}
The action of $\rho_t(\gamma)$ on $C^\ast$ is orientation preserving and either parabolic or hyperbolic. Furthermore, if the action of $\rho_t(\gamma)$ is parabolic then $\rho_t(\Gamma_\infty^{Tr})$ is conjugate to a lattice in $P_d$ and if $\rho_t(\gamma)$ is hyperbolic then $\rho_t(\Gamma_\infty^{Tr})$ is conjugate to a lattice in $B_d$
\end{lemma}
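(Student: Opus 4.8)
<br>

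The plan is to analyze the action of $\rho_t(\gamma)$ on the pencil $C^\ast$, which is a copy of $\RP^1$, by working in the $3$-dimensional linear space spanned by $e_2^\ast$ and $e_{d+1}^\ast$ together with whatever third coordinate the centralizer $\mathcal{Z}(P_{d-1}^0)$ forces. More precisely, I would first invoke Lemma \ref{pn-1centralizer}: since $\Gamma_\infty^{Tr}$ is abelian and $\rho_t(\Delta_\infty^{Tr})$ is a lattice in $P_{d-1}^0$, the element $\rho_t(\gamma)$ commutes with $\rho_t(\Delta_\infty^{Tr})$, hence lies in $\mathcal{Z}(P_{d-1}^0)$ and has the block form \eqref{e:pn-1centralizer} with parameters $(a,b,c,z,u)$. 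The action on $C^\ast$ is then read off from the $2\times 2$ minor acting on $\spa(e_2^\ast, e_{d+1}^\ast)$; a direct computation shows this minor is upper (or lower) triangular with diagonal entries $b$ and $1$ (up to the overall scaling), so the action on $\RP^1$ is determined by the sign and size of $b$ relative to $1$. This immediately gives the trichotomy: $b=1$ with $c\neq 0$ is parabolic, $b>0$, $b\neq1$ is hyperbolic with two distinct positive (hence orientation-preserving) fixed points, $b<0$ would be orientation-reversing, and $b=1$, $c=0$ would make $\rho_t(\gamma)$ act trivially on $C^\ast$.

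Next I would rule out the degenerate and orientation-reversing possibilities. The case $b<0$ is excluded because $\rho_t$ is a continuous deformation of $\rho_0$ with $\rho_0(\gamma)$ a hyperbolic isometry whose action on $C^\ast$ (through $\rho_0(\gamma)\in\PSO(Q_d)$) is either trivial or by a positive hyperbolic element; since $b$ varies continuously in $t$ and $b(0)>0$, it cannot change sign, and more robustly one notes that a negative $b$ would force $\rho_t(\gamma)$ to swap the two endpoints of the invariant segment, contradicting the fact that $\G_t$ preserves a properly convex $\O_t$ (Theorem \ref{t:prop_conv}) together with the analysis of which group $\rho_t(\Gamma_\infty^{Tr})$ generates. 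The case $b=1$, $c=0$: here $\rho_t(\gamma)$ would, modulo its action on coordinates $3,\dots,d$, be a pure parabolic of $P^0_{d-1}$ type, so $\rho_t(\Gamma_\infty^{Tr})$ would be virtually a lattice in a group conjugate to $P_d$ anyway — but actually this case cannot occur because $\Gamma_\infty^{Tr}/\Delta_\infty^{Tr}\cong\ZZ$ is generated by $\overline{\gamma}$ and $\rho_t$ is faithful, so $\rho_t(\gamma)$ is not in the image of $\rho_t(\Delta_\infty^{Tr})\cdot(\text{unipotent part})$ in a way that collapses; I would phrase this more carefully by showing that triviality on $C^\ast$ forces $\rho_t(\gamma)$ into $\mathcal{Z}(P_{d-1}^0)$ with $a=z=0$ as well (commuting with the full lattice $\rho_t(\Delta_\infty^{Tr})$ pins down $u$), reducing to the parabolic subcase.

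For the two main conclusions I would argue as follows. If the action on $C^\ast$ is parabolic (so $b=1$, $c\neq 0$), then $\rho_t(\gamma)$ together with $\rho_t(\Delta_\infty^{Tr})\subset P^0_{d-1}$ generates a group inside the Lie subgroup of $\mathcal{Z}(P_{d-1}^0)$ with $b=1$; conjugating to normalize $c$ (and absorbing the off-pencil parameters $a,z$ by a further conjugation that fixes $P^0_{d-1}$), this subgroup lands in a conjugate of $P_d$, and since $\rho_t(\Gamma_\infty^{Tr})$ is a finitely generated abelian group of rank $d-1$ acting properly and cocompactly on an orbit it is a lattice there. If the action is hyperbolic ($b>0$, $b\neq 1$), the same normalization places $\rho_t(\gamma)$ in the dilation direction $H_{di}$ of $B_d$ (see \eqref{e:Bd}) and $\rho_t(\Delta_\infty^{Tr})$ in the translation directions $H_{tr}$, so $\rho_t(\Gamma_\infty^{Tr})$ sits as a rank-$(d-1)$ discrete abelian subgroup of a conjugate of $B_d$, hence a lattice. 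The main obstacle I expect is the bookkeeping in the conjugation step: showing that the extra parameters $a, c, z$ in \eqref{e:pn-1centralizer} can always be simultaneously normalized by an element of $\PGL_{d+1}(\RR)$ that conjugates $P^0_{d-1}$ back to itself and sends the whole abelian group into the model copy of $P_d$ or $B_d$, rather than just a $\PGL_{d+1}(\RR)$-conjugate of it that might not preserve a convex set — this is precisely where Lemmas \ref{l:stand_prime_not_conv} and \ref{l:bent_prime_not_conv} are needed, to exclude the "primed" groups $P'_d$ and $B'_d$ which arise from the wrong sign or placement of these parameters and which admit no convex-preserving lattice, thereby forcing us into the genuinely convex models.
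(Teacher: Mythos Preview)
Your overall strategy matches the paper's: invoke Lemma~\ref{pn-1centralizer} to get the block form, read the action on $C^\ast$ from the $2\times 2$ minor, then conjugate into $P_d$ or $B_d$, using Lemmas~\ref{l:stand_prime_not_conv} and~\ref{l:bent_prime_not_conv} to exclude the alternatives $P'_d$ and $B'_d$. You have correctly identified the key mechanism, including where the primed lemmas enter.

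There are, however, real gaps in your handling of the ``non-trivial and orientation-preserving'' step. First, a small error: $\rho_0(\gamma)$ is not a hyperbolic isometry but a parabolic one (since $\gamma\in\Gamma_\infty^{Tr}$), so in your notation $b(0)=1$. Your continuity argument can still be salvaged, because Lemma~\ref{pn-1centralizer} forces $b\in\R^\times$ and hence $b(t)$ can never cross zero; but you should state it correctly, and this is not how the paper proceeds. More seriously, your argument that the action on $C^\ast$ is non-trivial does not work: faithfulness of $\rho_t$ only yields $\rho_t(\gamma)\neq I$, which does not prevent the $2\times 2$ minor from being the identity, and your fallback (``reducing to the parabolic subcase'') is circular. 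The paper instead argues geometrically: the hyperplanes of the pencil $C^\ast$ that meet $\O_t$ correspond to lifts of $\Sigma$ in the universal cover, $\rho_t(\gamma)$ sends each lift to a distinct one, and on this ordered family of lifts it acts as an increasing shift. Thus the induced action on $C^\ast$ is non-trivial and topologically conjugate to an increasing homeomorphism of $\R$, hence orientation-preserving. The paper uses the same style of reasoning to get non-triviality of the action on the line $C$, which is what gives $\alpha\neq 0$ in the parabolic case \emph{before} one can conjugate to normalize $\alpha=\pm\delta$; you do not address this point at all, and without it the normalization step in the parabolic case does not go through.
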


\begin{proof}
The matrix $\rho_t(\gamma)$ commutes with every element of $\Delta_\infty^{Tr}$ and thus centralizes $P_{d-1}^0$. Thus by Lemma \ref{pn-1centralizer} we see that 
\begin{equation}\label{e:rhogammaform}
 \rho_t(\gamma)=\begin{pmatrix}
                   1 & \alpha & v^t & z\\
   0 & \beta & 0 & \delta\\
   0 & 0 & I & v\\
   0 & 0 & 0 & 1
                 \end{pmatrix}
\end{equation}


We first show that the action of $\rho_t(\gamma)$ on $C^\ast$ is non-trivial and orientation preserving. The action of $\rho_t(\gamma)$ on the universal cover $\O_t$ of $\Quotient{\O_t}{\G_t}$ send every lift of $\Sigma$ to a different lift of $\Sigma$, each lift of $\Sigma$ gives a point of $C^\ast$, hence the action on $C^\ast$ is non-trivial. Moreover, from the action of $\rho_t(\gamma)$ on the universal cover $\O_t$, we see that the action of $\rho_t(\gamma)$ on $C^\ast$ is topologically conjugated to an increasing homeomorphism, thus the action of $\rho_t(\gamma)$ on $C^\ast$ is orientation preserving. Furthermore, the action of $\rho_t(\gamma)$ on $C^\ast$ fixes $[e_{d+1}^\ast]$, and is thus not elliptic. It remains to prove that if $\rho_t(\gamma)$ is parabolic (resp.\ hyperbolic) then $\rho_t(\G_\infty^{Tr})$ is conjugate into $P_d$ (resp.\ $B_d$).

The action of $\rho_t(\gamma)$ on $C^\ast$ is given (in appropriate projective coordinates) by 
$$\begin{pmatrix}
   \beta & \delta\\
   0 & 1
  \end{pmatrix}
$$


Since the action of $\rho_t(\gamma)$ is orientation preserving we get that $\beta>0$. Henceforth we will write $\beta=e^b$ and we see that the action of $\rho_t(\gamma)$ is parabolic if and only if $b=0$.

Next, we assume that $b=0$ and prove that $\rho_t(\gamma)$ can be conjugated into $P_d$ by an element that centralizes $P_{d-1}^0$. By assumption we have
$$\rho_t(\gamma)=\begin{pmatrix}
                  1 & \alpha & v^t & z\\
                  0 & 1 & 0 & \delta\\
                  0 & 0 & I & v\\
                  0 & 0 & 0 & 1
                 \end{pmatrix}
$$

Since the actions of $\rho_t(\gamma)$ on both $C^\ast$ and on the unique $P_{d-1}^0$-invariant line $C$ of $\R\Pb^d$ are non-trivial, we get that neither $\alpha$ or $\delta$ can be zero. Furthermore, by conjugating by an element of the form
\begin{equation}\label{e:central}
\begin{pmatrix}
   1 & 0 & 0 & 0\\
   0 & e & 0 & f\\
   0 & 0 & I & 0\\
   0 & 0 & 0 & 1
  \end{pmatrix}
\end{equation}
we can assume that $\alpha=\pm\delta$ and that $z=(\abs{v}^2\pm\alpha^2)/2$. Note that the element in \eqref{e:central} centralizes $P_{d-1}^0$ by Lemma \ref{pn-1centralizer}

Thus this case will be complete if we can show that $\alpha=\delta$. Suppose for contradiction that $\alpha=-\delta$, then we see that $\rho_t(\gamma)\in P'_d$ and thus $\rho_t(\Gamma_\infty^{Tr})$ is a lattice in $P_d'$. Thus by Lemma \ref{l:prime_not_conv} we get that $\rho_t(\Gamma)$ cannot preserve an open properly convex set, which contradicts Theorem \ref{t:prop_conv}. We conclude that $\rho_t(\gamma)\in P_d$ and hence that $\rho_t(\Gamma_\infty^{Tr})$ is a lattice in $P_d$.

Assume now that the action of $\rho_t(\gamma)$ on $C^\ast$ is hyperbolic. We complete the proof by showing that $\rho_t(\gamma)$ is conjugate into $B_d$ by an element normalizing $P_{d-1}^0$. Since the action of $\rho_t(\gamma)$ is hyperbolic we can assume that 
$$\rho_t(\gamma)=\begin{pmatrix}
                  1 & \alpha & v^t & z\\
                  0 & e^b & 0 & \delta\\
                  0 & 0 & I & v\\
                  0 & 0 & 0 & 1
                 \end{pmatrix}
$$
such that $b\neq0$. By replacing $\gamma$ with its inverse we can assume without loss of generality that $b>0$. Furthermore, by conjugating by an element in the normalizer of $P_{d-1}^0$ of the form 
$$\begin{pmatrix}
   e^2 & f & 0 & 0\\
   0 & 1 & 0 & g\\
   0 & 0 & e\cdot I & 0\\
   0 & 0 & 0 & 1
  \end{pmatrix}
$$
we can assume that $\alpha=\delta=0$ and that $z=1/2\abs{v}^2\pm b$. The case where $z=1/2\abs{v}^2+b$ cannot occur, since if it did, $\rho_t(\Gamma_\infty^{Tr})$ would be conjugate to a lattice in $B'_d$. This gives rise to a contradiction similar to that of the parabolic case, thanks to Lemma \ref{l:prime_not_conv} and Theorem \ref{t:prop_conv}.  
\end{proof}

\begin{remark}\label{r:S1affstruct}
As we have seen $\rho_t(\Gamma_\infty^{Tr})$ preserves $C^\ast\cong \RP^1$ and this representation descends to give an action of the cyclic group $\Gamma_\infty^{Tr}/\Delta_\infty^{Tr}$ on $C^\ast$. We denote by $\omega_t^\ast$ the convex open subset of $C^*$ consisting of the hyperplanes of $C^\ast$ that intersect $\O_t$. The set $\omega_t^\ast$ is a domain of discontinuity for the action of the cyclic group $\Gamma_\infty^{Tr}/\Delta_\infty^{Tr}$ on $C^\ast$. If we identify $\R$ with $\RP^1\bs\{\infty\}$, where $\infty$ is a fixed point of $\rho_t(\gamma)$ in $C^\ast$ then we can projectively identify $\omega_t^\ast$ with a subset of $\R$ and $\rho_t(\gamma)$ with a element of the affine group $\Aff(\R)$. Hence we get an affine structure on $S^1$.

As a consequence of Lemma \ref{l:latticeliegroup} we see that the holonomy of this affine structure is either parabolic or hyperbolic, depending on how $\rho_t(\gamma)$ acts on $C^\ast$. In this way we can associate an affine structure on $S^1$ to each cusp of $M$ and we see that whether or not this affine structure is euclidean determines whether or not the cusp is standard. 
\end{remark}

\subsection{Horoballs in manifolds arising from bending}

In this section we discuss some existence and configuration results that will be used to prove that manifolds obtained by bending have finite volume.  
For $t\neq 0$ the domains $\partial\Omega_t$ will not have strong regularity properties. For example, their boundaries are never $\Cc^2$. However, the following lemma shows that these domains can be approximated by the horoballs introduced in section \ref{s:generalizedcusps}, which are smooth almost everywhere.

\begin{lemma}\label{l:horoballapprox}
 Let $M$ be a finite volume hyperbolic manifold and let $\Sigma$ be a finite volume totally geodesic hypersurface. Let $\Mc_t=\Omega_t/\Gamma_t$ be a projective manifold obtained by bending $M$ along $\Sigma$. Let $\G_p$ be a peripheral subgroup of $\G_t$. Then there exist horoballs $\Hc_{int}$ and $\Hc_{ext}$ centered at  a face $s_p\subset \partial \O_t$ such that:
\begin{enumerate}
\item $\Hc_{int}$ and $\Hc_{ext}$ are $\Gamma_p$-invariant.
\item $\Hc_{int} \subset \Omega_t \subset \Hc_{ext}$
\end{enumerate}

\end{lemma}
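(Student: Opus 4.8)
The plan is to use the two Lie-group pictures established in Section~\ref{s:generalizedcusps} as models and to transfer them through the holonomy. By Theorem~\ref{t:holo_end_standorbend} and Lemma~\ref{l:latticeliegroup} we know $\rho_t(\G_p)$ is virtually a lattice in a conjugate of $P_d$ or $B_d$; after conjugating (which changes $\O_t$ but not the statement) we may assume $\rho_t(\Gamma_p^{Tr})$ is a lattice $\Lambda$ in $P_d$ (the ``standard'' case) or in $B_d$ (the ``bent'' case), and that $\G_p/\Gamma_p^{Tr}$ is finite. In the first case the peripheral face $s_p$ will be the single point $[e_1]=\infty$ fixed by $P_d$; in the second case it will be the segment $s_\infty=[e_1,e_2]$ in $\partial\Bc^d$. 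In either case, the horoballs $\Hc_c$ (epigraphs of $f_c$ resp.\ $g_c$ from Section~\ref{s:generalizedcusps}) are $P_d$- resp.\ $B_d$-invariant, hence $\Lambda$-invariant, and they are totally ordered by inclusion with $\bigcup_c \Hc_c$ equal to the whole affine patch $\A$ and $\bigcap_c \Hc_c$ empty.

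First I would produce $\Hc_{ext}$. Since $\O_t$ is properly convex (Theorem~\ref{t:prop_conv}) it is contained in some affine patch $\A'$; I claim one may take $\A'=\A$, the patch corresponding to $[e_{d+1}^\ast]$, because the hyperplane $[e_{d+1}^\ast]$ is $\Lambda$-invariant and (being the unique $\rho_t(\Gamma_p^{Tr})$-invariant hyperplane of the relevant Jordan type, as in the proof of Lemma~\ref{pn-1centralizer}) must be disjoint from $\O_t$ — if it met $\O_t$, then by $\Lambda$-invariance and convexity $\O_t$ would contain the line joining $\infty$ to any point of the intersection, contradicting proper convexity, exactly as in Lemmas~\ref{l:stand_prime_not_conv} and~\ref{l:bent_prime_not_conv}. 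Now fix a point $p\in\O_t\cap\A$. The convex hull $\Hc_{ext}$ of the orbit $\Lambda\cdot p$ together with $s_p$ is $\Lambda$-invariant and convex; one checks it is exactly one of the model horoballs $\Hc_c$ because in the coordinates $(x,v)$ (resp.\ $(x,y,v)$) a single $\Lambda$-orbit is (the graph of $f_c$ over a lattice in $\RR^{d-1}$, resp.\ the graph of $g_c$ over a lattice in $\RR^+\times\RR^{d-2}$) and the convex hull of such a graph is the epigraph $\Hc_c$. Since $p\in\O_t$ and $\O_t$ is convex and $\Lambda$-invariant, $\Hc_{ext}=\mathrm{Conv}(\Lambda\cdot p\cup s_p)\subset\O_t$ — wait, this gives containment the wrong way, so instead $\Hc_{ext}$ should be obtained by choosing $p$ \emph{outside} $\O_t$ but in $\A$, with $p$ on the same side, so that $\overline{\O_t}$ lies in the epigraph it spans; equivalently, since $\partial\O_t$ is disjoint from $s_p$ and $\O_t/\Lambda$ has a compact ``thick part'', the function measuring the $x$-coordinate of $\partial\O_t$ over the $(v)$- resp.\ $(y,v)$-directions is $\Lambda$-periodic, hence bounded above on a fundamental domain, hence bounded above everywhere, which is exactly the statement that $\O_t\subset\Hc_c$ for $c$ large enough.

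For $\Hc_{int}$ I would dualize, or argue directly: the same periodicity argument shows the $x$-coordinate of $\partial\O_t$ is bounded \emph{below} on a fundamental domain for $\Lambda$, hence everywhere, so $\O_t$ contains a horoball $\Hc_{c'}$ for $c'$ sufficiently negative (resp.\ sufficiently large in the bent normalization), provided one knows $\O_t$ does contain points with arbitrarily small such coordinate — which follows because $\O_t$ is open and $\Lambda$-invariant and accumulates only on $s_p$ in the relevant direction. The $\Lambda$-invariance of $\Hc_{int}$ and $\Hc_{ext}$ is automatic since they are model horoballs. Finally, to upgrade from $\Lambda=\rho_t(\Gamma_p^{Tr})$-invariance to $\G_p$-invariance: $\G_p/\Gamma_p^{Tr}$ is finite, so intersecting finitely many $\G_p$-translates of $\Hc_{int}$ (still a horoball centered at $s_p$, by the classification of $\G_p$-invariant convex sets — any $g\in\G_p$ normalizes $\Lambda$ and hence permutes the model horoballs, fixing $s_p$) and taking the convex hull of finitely many $\G_p$-translates of $\Hc_{ext}$ yields $\G_p$-invariant horoballs with the same sandwiching property.

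The main obstacle, I expect, is making precise the ``$\Lambda$-periodicity forces boundedness'' step: one must know that $\partial\O_t$, cut by the line foliation concurrent to $s_p$, is the graph of a well-defined function of the transverse coordinates (which uses proper convexity plus the fact that every such line meets $\partial\O_t$ in exactly the expected number of points), and that this function is genuinely $\Lambda$-invariant and continuous, so that its sup and inf over a compact fundamental domain are finite and attained. Controlling the behaviour near the face $s_p$ itself — ensuring the line foliation really does sweep out all of $\O_t$ — is the delicate point, and is where proper convexity of $\O_t$ together with the explicit dynamics of $\rho_t(\gamma)$ on $C^\ast$ from Lemma~\ref{l:latticeliegroup} does the work.
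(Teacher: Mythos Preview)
Your approach is essentially the same as the paper's: conjugate so that $\rho_t(\Gamma_p^{Tr})$ is a lattice in $P_d$ or $B_d$, realize $\partial\O_t$ (away from the peripheral face) as the graph of a convex function $h_t$ over the transverse coordinates, and use $\Lambda$-equivariance of both $h_t$ and the model function $f_0$ (resp.\ $g_0$) to reduce the comparison $h_t \lessgtr f_0+c$ to a compact fundamental domain. The paper handles your ``main obstacle'' exactly as you anticipate, by first showing the domain of $h_t$ is the unique open convex $\Lambda$-invariant subset (all of $\R^{d-1}$, resp.\ $\R^+\times\R^{d-2}$); your convex-hull-of-orbit detour is unnecessary, and the upgrade from $\Gamma_p^{Tr}$- to $\Gamma_p$-invariance is dispatched in both arguments by the normality of $\Gamma_p^{Tr}$.
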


\begin{proof}
\par{
By Theorem \ref{t:holo_end_standorbend}, we know that $\G_p$ contains a finite index normal subgroup $\G_p'$ that is conjugate to a lattice in either $P_d$ or $B_d$, and we will henceforth assume that we have conjugated $\G_p'$ into either $P_d$ or $B_d$. The horoballs $\Hc_{int}$ and $\Hc_{ext}$ that we construct will be epigraphs of the functions $f_c$ and $g_c$ that we defined in Section \ref{s:generalizedcusps}. Thus $\Hc_{int}$ and $\Hc_{ext}$ will easily seen to be invariant under $\G_p'$ and hence $(1)$ is satisfied since $\G_p'$ is a discrete normal subgroup \merde{of $\G_p$}.

Let us first treat the case where $\G_p'$ is a lattice in $P_d$. In this case $\G_p$ has a unique fixed point $s_p$ and a unique invariant hyperplane $p^\ast$ that contains $s_p$. The point $s_p$  (resp. $p^\ast$) is an accumulation point of $\G_p'$-orbit of any point in $\O_t$ (resp. $\O_t^\ast$) and so $s_p\in \partial \O_t$ and $p^\ast\in\partial \O_t^\ast$. Thus $p^\ast$ corresponds to a supporting hyperplane to $\O_t$ at $s_p$. 
}
\par{ 
Let $\A$ be the affine patch defined by $p^\ast$. In these coordinates the points of $\partial \Omega_t$ that are not contained in the kernel of $p^\ast$ or in any segment \merde{included in $\partial \O_t$} through $p$ can be realized as the graph of $h_t:U_t\subset \RR^{d-1}\to \RR$, where $U_t$ is a open convex $\G_p'$-invariant subset of $\R^{d-1}$ and $h_t$ is a continuous convex function  (Here we are identifying $\RR^{d-1}$ with the space of lines through $s_p$ that are not contained in the kernel of $p^\ast$). 

It is easy to see that the only open convex $\G_p'$-invariant subset of $\R^{d-1}$ is $\R^{d-1}$ and so $U_t=\R^{d-1}$. If we let $f_0$ be the function defined in Section \ref{s:generalizedcusps} then in order to find $\Hc_{int}$ satisfying $(2)$ we need to find a positive constant $D$ such that $h_t<f_0+D$.

Let $K\subset \RR^{d-1}$ be a compact fundamental domain for the affine action of $\G_p'$ on $\RR^{d-1}$ and choose $D$ so that $h_t\vert_{K} < f_0 \vert_{K} +D$. Suppose for contradiction that there is a point $u\in U_t$ such that $h_t(u) \geqslant f_0(u)+D$. By continuity of $h_t$ we can find $v\in U_t$ such that $h_t(v)=f_0(v)+D$. Furthermore, we can find $\gamma\in \G_p'$ such that $\gamma v\in K$. As a result we get that $\gamma\cdot (h_t(v),v)=\gamma\cdot(f_0(v)+D,v)$. By equivariance properties of $h_t$ and $f_0$ we get that $(h_t(\gamma v),\gamma v)=(f_0(\gamma v)+D,\gamma v)$, but this contradicts our choice of $D$. The existence of $\Hc_{ext}$ follows from a similar argument where we find a positive constant $E$ such that $f_0-E<h_t$. This completes the proof of $(2)$ in this case. 

In the case where $\G_p'$ is a lattice in $B_d$ the group $\G_p'$ now has 2 distinct fixed points $p_+$ and $p_-$. Each of these points is an accumulation point of the $\G_p'$-orbit of a point in $\O_t$ and so both $p_+$ and $p_-$ are contained in $\partial \O_t$. A similar argument shows that the group $\G_p'$ has two fixed points $p_{\pm}^\ast\in \partial \O_t^\ast$. One of these dual fixed points, say $p_+^\ast$, corresponds to a supporting hyperplane for $\O_t$ and we let $s_p$ be the segment connecting $p_+$ and $p_-$ that is contained in $\dO_t$. 

Again we see that in the affine patch corresponding to $p_+^\ast$ the points of $\partial \Omega_t$ that are not contained in the kernel of $p^\ast$ or in any segment containing $p$  can be realized as the graph of $h_t:U_t\subset \RR^{d-1}\to \RR$, where $U_t$ is a open convex $\G_p'$-invariant subset of $\R^{d-1}$ and $h_t$ is a continuous convex function. Similar to the previous case we see that the only open convex $\G_p'$-invariant subsets of $\R^{d-1}$ are $\R^\pm\times \R^{d-2}$, and so without loss of generality, we can asssume that $U_t=\R^+\times \R^{d-2}$. 

If we let $g_0$ be the function defined in section \ref{s:generalizedcusps} then we can again find positive constants $D$ and $E$ such that $f_0-E<h_t<f_0+D$, and thus we can find horoballs $\Hc_{int}$ and $\Hc_{ext}$ satisfying $(2)$ and $(3)$. 
}

\end{proof}

Let $H\subset \G$ be a subgroup, and let $X\subset \O$ be a subset, then we say that $X$ is \emph{$(\G,H)$-precisely invariant} or just \emph{precisely invariant} if the groups are clear from context whenever 

\begin{itemize}
\item $X$ is invariant under $H$
\item If $\gamma\in \G$ and $\gamma\cdot X\cap X\neq \varnothing$ then $\gamma\in H$. 
\end{itemize}

Precisely invariant subsets are useful since they correspond to (components of) the universal cover of embedded submanifolds. More specifically if $X\subset \O$ is $(\G,H)$-precisely invariant then $X/H$ embeds in $\O/\G$.

Lemma \ref{l:horoballapprox} tells us that for each peripheral subgroup we can find a horoball $\Hc_{int}$ that is contained in $\O_t$. The next lemma shows that, in addition, we can also arrange that these horoballs are precisely invariant with respect to the corresponding peripheral subgroup. 

\begin{lemma}\label{l:precinv}
The horoballs, $\Hc_{int}$, constructed in Lemma \ref{l:horoballapprox} can be chosen to be precisely invariant under the corresponding peripheral subgroup.
\end{lemma}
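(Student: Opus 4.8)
The plan is to deduce precise invariance from the topological tameness of $M$ together with the cusp volume estimate of Theorem~\ref{t:fvcuspnbhd}. Since $M=\Omega_t/\Gamma_t$ is homeomorphic to a finite volume hyperbolic manifold it is topologically tame, so I fix pairwise disjoint embedded product end-neighborhoods $N_1,\dots,N_k\subset M$, indexed so that $\pi_1(N_{i_0})$ is the given peripheral subgroup $\Gamma_p$; write $N=N_{i_0}$ and let $\widehat N\subset\Omega_t$ be the component of $p^{-1}(N)$ with $\mathrm{Stab}_{\Gamma_t}(\widehat N)=\Gamma_p$. Because $N$ is embedded, distinct $\Gamma_t$-translates of $\widehat N$ are disjoint. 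Let $(\Hc_c)_{c\ge 0}$ be the nested family of $\Gamma_p$-invariant horoballs centered at the peripheral face $s_p$ and interior to $\Omega_t$ obtained from Lemma~\ref{l:horoballapprox} by pushing the additive constant in its proof to $+\infty$; thus $\Hc_0=\Hc_{int}$, $\Hc_c\subset\Hc_0$ for $c>0$, and $\bigcap_{c}\Hc_c=\varnothing$. It then suffices to prove $\Hc_c\subseteq\widehat N$ for all large $c$: indeed, for any $\gamma\in\Gamma_t$ with $\gamma\Hc_c\cap\Hc_c\neq\varnothing$ we get $\gamma\widehat N\cap\widehat N\neq\varnothing$ and hence $\gamma\in\mathrm{Stab}_{\Gamma_t}(\widehat N)=\Gamma_p$, which is precisely the assertion.

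The first step toward this containment is to show $\mathrm{vol}_M(p(\Hc_c))\to 0$. As $\Hc_c$ is $\Gamma_p$-invariant, it descends to $\Hc_c/\Gamma_p$, which is finitely covered by the horoball quotient $\Hc_c/\Gamma_p'$ of Theorem~\ref{t:fvcuspnbhd} (where $\Gamma_p'\leq\Gamma_p$ is the finite index lattice in $P_d$ or $B_d$), hence has finite Busemann volume; since $\bigcap_c\Hc_c=\varnothing$, continuity of the measure gives $\mathrm{vol}(\Hc_c/\Gamma_p)\to 0$. Now Proposition~\ref{p:compa_busemann} applied to $\Hc_c\subseteq\Omega_t$ bounds the Busemann density of $\Omega_t$ on $\Hc_c$ by that of $\Hc_c$, and the covering $\Omega_t/\Gamma_p\to\Omega_t/\Gamma_t$ is a local isometry, so $\mathrm{vol}_M(p(\Hc_c))\leq\mathrm{vol}(\Hc_c/\Gamma_p)\to 0$. (I stress that one cannot simply invoke finiteness of $\mathrm{vol}(M)$ here, since that is only established afterwards in Theorem~\ref{t:bendingfinitevolume}; routing through Theorem~\ref{t:fvcuspnbhd} is what avoids the circularity.)

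The second step is a matching lower bound forcing $p(\Hc_c)$ out of the compact core $C_M=M\setminus(N_1\sqcup\cdots\sqcup N_k)$. Fix $r_0>0$ less than the injectivity radius of $M$ at every point of $C_M$. If $p(\Hc_c)$ met $C_M$, pick $\widetilde z\in\Hc_c$ with $p(\widetilde z)\in C_M$; then $p$ is an isometry on $B_{\Omega_t}(\widetilde z,r_0)$, so $\mathrm{vol}_M(p(\Hc_c))\geq\mathrm{vol}_{\Omega_t}\!\big(\Hc_c\cap B_{\Omega_t}(\widetilde z,r_0)\big)$, and this last quantity is bounded below by a positive constant depending only on $C_M$ (the volume of $B_{\Omega_t}(\widetilde z,r_0)$ is $\Gamma_t$-invariant, hence uniformly positive over $p^{-1}(C_M)$, and the convex horoball $\Hc_c$ with its smooth paraboloidal boundary captures a definite fraction of it). Combined with the first step this gives $p(\Hc_c)\subseteq N_1\sqcup\cdots\sqcup N_k$ for $c$ large; since $p(\Hc_c)$ is connected it lies in a single $N_i$, and the $\Gamma_p$-invariance of $\Hc_c$ conjugates $\Gamma_p$ into $\pi_1(N_i)$, which is impossible unless $i=i_0$ because the peripheral subgroups of distinct cusps of a finite volume hyperbolic manifold have distinct parabolic fixed points on $\partial\Omega_t$. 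Thus $p(\Hc_c)\subseteq N$, so $\Hc_c\subseteq p^{-1}(N)=\bigsqcup_{\gamma\Gamma_p}\gamma\widehat N$; connectedness places $\Hc_c$ in one sheet $\gamma_0\widehat N$, whereupon $\Gamma_p$-invariance forces $\Hc_c\subseteq\bigcap_{\delta\in\Gamma_p}\delta\gamma_0\widehat N$, so the sheets $\delta\gamma_0\widehat N$ all coincide, $\gamma_0$ normalizes $\Gamma_p$, and malnormality of cusp subgroups yields $\gamma_0\in\Gamma_p$ and $\Hc_c\subseteq\widehat N$.

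The step I expect to demand the most care is the lower bound in the second step. This is where one genuinely uses the geometry of the horoballs—their convexity and the regularity of $\partial\Hc_c$—together with compactness of $C_M$, and it is also the step sensitive to the distinction between standard and bent cusps: for a bent cusp the injectivity radius need not tend to $0$ deep in the cusp (compare the Remark following Theorem~\ref{t:fvcuspnbhd}, where $\Bc^2/\Lambda$ has infinite area), so there is no thin-part/Margulis shortcut asserting that a sufficiently small horoball automatically lies in the thin part and hence misses the compact core; the vanishing of $\mathrm{vol}_M(p(\Hc_c))$ must be leveraged directly against the radial thickness of horoballs over $C_M$.
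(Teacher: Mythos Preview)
Your approach is genuinely different from the paper's, and the overall architecture—push the horoball deep enough that its image misses a topological compact core—is reasonable. But two points deserve comment, and one of them is a real gap.

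First, a minor correction in your first step. The comparison you cite, Proposition~\ref{p:compa_busemann} applied to $\Hc_c\subset\Omega_t$, only yields $\mu_{\Omega_t}\leq\mu_{\Hc_c}$ on $\Hc_c$, and $\mu_{\Hc_c}(\Hc_c)=\infty$. What you actually need is to fix the ambient horoball once and for all—say $\Hc_0=\Hc_{int}$, which is projectively a copy of $\HH^d$ or $\Bc^d$—apply Theorem~\ref{t:fvcuspnbhd} with $\O=\Hc_0$ to get $\mu_{\Hc_0}(\Hc_c/\Gamma_p')<\infty$ and $\to 0$, and then use $\Hc_0\subset\Omega_t$ to obtain $\mu_{\Omega_t}\leq\mu_{\Hc_0}$ on $\Hc_c$. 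With that fix the first step goes through.

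The second step, however, has a genuine gap that you yourself flag. The assertion that ``the convex horoball $\Hc_c$ with its smooth paraboloidal boundary captures a definite fraction'' of the metric ball $B_{\Omega_t}(\widetilde z,r_0)$ is not justified, and I do not see how to justify it with the tools at hand. Convexity of $\Hc_c$ only gives you a supporting hyperplane at $\widetilde z\in\partial\Hc_c$, and there is no reason a supporting hyperplane through the \emph{center} of a Hilbert ball should cut off a definite proportion of its Busemann volume: Hilbert balls are not round, their shape varies with the point, and the only control you have over $\widetilde z$ is that $p(\widetilde z)\in C_M$—this bounds the volume of $B_{\Omega_t}(\widetilde z,r_0)$ from below but says nothing about its shape relative to $\partial\Hc_c$. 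The smoothness of $\partial\Hc_c$ is with respect to the affine chart, not the $\Omega_t$-geometry, and as $c\to\infty$ the horospheres recede into the cusp where you have no curvature control in the $\Omega_t$-metric. Without this lower bound the contradiction with $\mathrm{vol}_M(p(\Hc_c))\to 0$ does not close.

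The paper sidesteps all of this by using the projective Margulis lemma (Lemma~\ref{l:KMZ_lemma}). One picks a parabolic $\gamma\in\Gamma_p\cap P_{d-1}^0$; since $B_d$ (resp.\ $P_d$) acts transitively on each horosphere, $\gamma$ displaces every point of $\partial\Hc'_{int}$ the same $d_{\Hc_{int}}$-distance, and this can be made less than the Margulis constant $\varepsilon$ by shrinking the horoball. Comparison then gives $d_{\Omega_t}(z,\gamma z)<\varepsilon$ throughout $\Hc'_{int}$. Now if $\tau\Hc'_{int}\cap\Hc'_{int}\ni u$, both $\gamma$ and $\tau\gamma\tau^{-1}$ move $u$ less than $\varepsilon$, so $\langle\gamma,\tau\gamma\tau^{-1}\rangle$ is virtually nilpotent; relative hyperbolicity of $\Gamma$ (as a finite-volume hyperbolic manifold group) then forces $\tau\in\Gamma_p$. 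This uses displacement rather than volume and avoids any shape estimate for Hilbert balls.
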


The following version of the Margulis lemma for properly convex domains will be crucial in the proof of Lemma \ref{l:precinv}.

\begin{lemma}[KMZ Lemma, \cite{CooperLongTillmann15,CM13}]\label{l:KMZ_lemma}
In every dimension $d$, there exists a positive constant $\varepsilon$ such that for every properly convex open set $\O$, for every $x \in \O$, for every discrete subgroup $\G$ of $\PGL(\O)$, the subgroup $\G_{\varepsilon}$ generated by the elements $\g \in \G$ such that $d_{\O}(x,\g \cdot x) < \varepsilon$ is virtually nilpotent.
 \end{lemma}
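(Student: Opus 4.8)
The plan is to import the classical Margulis--Zassenhaus machinery into the \emph{fixed} ambient group $G:=\PGL_{d+1}(\R)$, exploiting that $\PGL(\O)$ is always a closed subgroup of $G$ and that John's ellipsoid theorem makes the geometry of the pair $(\O,x)$ uniform. The basic input is the Zassenhaus lemma in $G$: there is a symmetric identity neighbourhood $\mathcal U\subset G$ such that every discrete subgroup of $G$ generated by a subset of $\mathcal U$ lies in a connected nilpotent Lie subgroup, hence is nilpotent; averaging, we may take $\mathcal U$ invariant under conjugation by any prescribed compact subgroup of $G$. The whole difficulty is to control, uniformly in $\O$, $x$, and $\G$, the set of $\g\in\G$ with $d_{\O}(x,\g\cdot x)<\varepsilon$ well enough to feed this lemma; granting that control, the conclusion is the standard pigeonhole/commutator argument of the Margulis lemma.

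Concretely, let $B\subset\RP^d$ be the round ball (Klein model of $\HH^d$). By John's theorem there is $g_0\in G$ with $B\subset g_0\cdot\O\subset rB$ for a dimensional constant $r=r(d)$, and since $g_0$ acts as a Hilbert isometry we may replace $(\O,x,\G)$ by $(g_0\cdot\O,\,g_0\cdot x,\,g_0\G g_0^{-1})$ and assume $B\subset\O\subset rB$. If $d_{\O}(x,\g\cdot x)<\varepsilon$ then $d_{rB}(x,\g\cdot x)<\varepsilon$ by Proposition~\ref{p:compa_hilbert} applied to $\O\subset rB$; as $d_{rB}$ is the hyperbolic metric on a fixed ball, this pins $\g\cdot x$ into a Euclidean ball about $x$ of radius $\delta(\varepsilon)\to 0$. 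Moreover $\g$ preserves $\O$, hence carries the fixed ball $B$ into the fixed ball $rB$ (and so does $\g^{-1}$). These constraints confine every such $\g$ to a subset $\mathcal K_\varepsilon\subset G$ depending only on $d$ and $\varepsilon$, whose closures decrease, as $\varepsilon\to 0$, to a set $\mathcal K_0$ of elements preserving some convex body squeezed between $B$ and $rB$ and fixing a point $\xi\in\overline{rB}$. If $\xi$ lies in the interior of that body then, by uniqueness of the John ellipsoid, such an element preserves that ellipsoid and fixes an interior point of it, so it belongs to the stabiliser of an interior point in the isometry group of an ellipsoid --- a compact group --- and $\mathcal K_0$ is a compact subset of $G$. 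One then covers $\mathcal K_0$, hence $\mathcal K_\varepsilon$ for $\varepsilon$ small, by $N=N(d)$ left translates of $\mathcal U$ and runs the pigeonhole/commutator argument: $\G_\varepsilon$ has a subgroup of index bounded by a function of $N$ generated by elements of $\G\cap\mathcal U$, hence nilpotent by Zassenhaus. If instead $\xi\in\partial\O$, the short elements nearly fix a boundary point of $\O$, and one identifies $\G_\varepsilon$ with a finite-index subgroup of a parabolic subgroup of $\PGL(\O)$, which is virtually nilpotent by the structure theory of such subgroups (cf.\ \cite{CooperLongTillmann15}). In all cases $\G_\varepsilon$ is virtually nilpotent.

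The main obstacle is precisely this uniformity: producing $\varepsilon$, $N$, and the index bound valid for all $(\O,x,\G)$ at once, which is exactly what the John normalisation, Proposition~\ref{p:compa_hilbert}, and the interior/boundary dichotomy for the limiting fixed point $\xi$ are designed to supply. A slicker route, if one is willing to quote a ready-made Margulis lemma for doubling metric spaces, is to observe that there is $C=C(d)$ such that every Hilbert ball of radius $\le 1$ is covered by $C$ Hilbert balls of half the radius (again via John's theorem, comparing Hilbert balls with Euclidean balls from inside and hyperbolic balls from outside) and then to apply that general statement; see \cite{CooperLongTillmann15,CM13}.
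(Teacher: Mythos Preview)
The paper does not prove this lemma: it is stated with attribution to \cite{CooperLongTillmann15,CM13} and then used as a black box in the proof of Lemma~\ref{l:precinv}. So there is nothing in the paper to compare your argument against; you are in effect sketching the proof from the cited sources.

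Your overall strategy (projectively normalise, then apply the Zassenhaus lemma in $\PGL_{d+1}(\R)$ via a pigeonhole argument) is indeed how those references proceed, but the normalisation step as you have written it has a real gap. John's theorem, applied to $\O$ alone, arranges $B\subset\O\subset rB$ but places no constraint on the basepoint $x$, which may still lie arbitrarily close to $\partial\O$. The conditions you record (``$\gamma$ carries $B$ into $rB$ and $\gamma^{-1}$ likewise'') do \emph{not} confine $\gamma$ to a compact set: already when $\O=B$ every element of $\PGL(\O)\cong\PO(d,1)$ satisfies them. Hence your $\mathcal K_\varepsilon$ need not be precompact, and you are forced into the dichotomy on the limiting fixed point $\xi$. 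Neither branch is sound. In the interior branch you claim that $\xi$ is an interior point of the John ellipsoid $E$ of the limit body, but $\xi\in\mathrm{int}(\O')$ in no way implies $\xi\in E$; an element of $\PGL(E)\cong\PO(d,1)$ fixing a point of $\RP^d\smallsetminus\overline{E}$ lies in a non-compact subgroup, so the conclusion fails. In the boundary branch the appeal to ``structure theory of parabolic subgroups'' is both unjustified (stabilisers of boundary faces in a general Hilbert geometry are not virtually nilpotent) and circular (the relevant structural results in \cite{CooperLongTillmann15} are proved \emph{using} the Margulis lemma).

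The remedy is to normalise the \emph{pair} $(\O,x)$ rather than $\O$ alone. This is exactly Benz\'ecri's compactness theorem, equivalently the centred form of John's theorem: for every interior point $x$ of a convex body there is an ellipsoid $E$ centred at $x$ with $E\subset\O\subset d\cdot E$. Taking $x=0$ and $E=B$, any $\gamma\in\PGL(\O)$ fixing $0$ must preserve the (unique) centred John ellipsoid $B$ and hence lies in the single compact group $\PO(d)$; the dichotomy disappears and the Zassenhaus--pigeonhole argument runs uniformly in $(\O,x)$. This is the route taken in \cite{CM13}. Your ``slicker route'' via a uniform doubling constant for Hilbert balls together with a packing Margulis lemma is correct and is closer to the treatment in \cite{CooperLongTillmann15}; note, however, that establishing that doubling constant again requires the Benz\'ecri normalisation of the pair $(\O,x)$, not merely the uncentred John normalisation of $\O$.
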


\begin{proof}[Proof of Lemma \ref{l:precinv}]
Let $\varepsilon$ be the constant of Lemma \ref{l:KMZ_lemma}. Let $\G_p$ be a peripheral subgroup of $\G$. Assume that $\G_p$ gives rise to a bent cusp, the case for a standard cusp can be treated similarly. Let $\Hc_{int}$ be the horoball guaranteed by Lemma \ref{l:horoballapprox}, and let $\Hc'_{int}$ be a smaller horoball with the same center. Since $\G_p$ is virtually a lattice in $B_d$ that arises from bending it contains a parabolic translation from $P^0_{d-1}$ which we call $\gamma$. We claim that every point on $\partial \Hc'_{int}$ is moved the same $\Hc_{int}$-Hilbert distance by $\gamma$. Let $x,y\in \partial \Hc'_{int}$. Since $B_d$ acts transitively on $\partial \Hc'_{int}$ we can find $\delta \in B_d$ such that $\delta y=x$. Therefore 
 $$d_{\Hc_{int}}(x,\gamma x)=d_{\Hc_{int}}(\delta y,\gamma\delta y)=d_{\Hc_{int}}(\delta y,\delta\gamma y)=d_{\Hc_{int}}(y,\gamma y),$$
 thus proving the claim.
 
Furthermore, since $\g$ is parabolic, if $z$ is a point on the boundary of an even smaller horoball then $d_{\Hc_{int}}(z,\gamma z)<d_{\Hc_{int}}(x,\gamma x)$ and this distance can be made arbitrarily close to zero by choosing the horoball to be sufficiently small. Thus, by shrinking $\Hc'_{int}$ if necessary, we can assume that $d_{\Hc_{int}}(z,\gamma z)<\varepsilon$ for $z\in \Hc'_{int}$. By the comparison property in Lemma \ref{p:compa_hilbert} we see that $d_{\O_t}(z,\gamma z)<\varepsilon$ for $z\in \Hc'_{int}$
 
We claim that $\Hc'_{int}$ is the desired precisely invariant horoball. By construction $\Hc'_{int}$ is $\G_p$-invariant. Next, suppose that $\tau\in \Gamma_t$ and that $\tau \Hc'_{int}\cap \Hc'_{int}\neq \varnothing$. Let $u\in \tau \Hc'_{int}\cap \Hc'_{int}$, hence $v = \tau^{-1} u \in \Hc'_{int}$. Observe that 

$$
 d_{\Omega_t}(u,\tau \gamma \tau^{-1}u)=d_{\Omega_t}(\tau^{-1} u,\gamma \tau^{-1}u)=d_{\Omega_t}(v,\gamma v)<\varepsilon,
$$

\noindent and so we see that $\tau\gamma\tau^{-1}$ also moves $u$ a distance less than $\varepsilon$. This implies that the group $\langle \gamma,\tau\gamma\tau^{-1}\rangle$ is virtually nilpotent. As an abstract group, $\Gamma_t$ is the fundamental group of a finite volume hyperbolic manifold, hence hyperbolic relatively to its peripheral subgroup. This implies that $\tau$ and $\gamma$ have a common fixed point for their action on the ideal boundary of $\HH^d$, and thus $\tau\in \G_p$. 
\end{proof}

By combining the previous few results we get the following Corollary. 

\begin{corollary}\label{c:endclassification}
If $\mathcal{M}_t=(\O_t,\G_t)$ is a of properly convex projective structure resulting from bending $M$ along $\Sigma$ then each end of $\mathcal{M}_t$ is either a standard or bent cusp.
\end{corollary}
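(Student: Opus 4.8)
The plan is to assemble Corollary \ref{c:endclassification} essentially as a bookkeeping exercise that stitches together the three preceding lemmas, being careful to track the distinction between the \emph{torus} cusps (lattices in $P_d$ or $B_d$) and the general cusps (lattices in finite extensions thereof). First I would recall that by definition an end of $\Mc_t = \O_t/\G_t$ corresponds to a conjugacy class of peripheral subgroups $\G_p$ of $\G_t$; fixing such a $\G_p$, I would invoke Theorem \ref{t:holo_end_standorbend} to obtain a finite-index normal subgroup $\G_p' \trianglelefteq \G_p$ which, after conjugation, is a lattice in $P_d$ or $B_d$.

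Next I would bring in Lemma \ref{l:horoballapprox} together with Lemma \ref{l:precinv} to produce a horoball $\Hc_{int}$ centered at the peripheral face $s_p$ which is contained in $\O_t$, invariant under $\G_p$, and \emph{precisely invariant} under $\G_p$. Precise invariance guarantees (as noted in the text following Lemma \ref{l:precinv}) that $\Hc_{int}/\G_p$ embeds in $\O_t/\G_t$ as a neighborhood of the corresponding end. It remains to identify this quotient with a standard or bent cusp in the sense of Section \ref{s:generalizedcusps}. Here I would argue as follows: the horoball $\Hc_{int}$, being centered at $s_p$ and $\G_p'$-invariant with $\G_p'$ a lattice in $P_d$ (resp.\ $B_d$), must be an epigraph of some $f_c$ (resp.\ $g_c$), hence is projectively equivalent to $\HH^d$ (resp.\ $\Bc^d$) via an element conjugating $\G_p'$ into $P_d$ (resp.\ $B_d$); one also needs the elementary fact that any horoball of $\HH^d$ (resp.\ $\Bc^d$) preserved by a lattice in $P_d$ (resp.\ $B_d$) is of this form, which follows from the convexity and invariance arguments already used in the proof of Lemma \ref{l:horoballapprox}. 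Then $\Hc_{int}/\G_p' \cong \HH^d/\G_p'$ (resp.\ $\Bc^d/\G_p'$) is a standard (resp.\ bent) torus cusp by the definitions in Section \ref{s:generalizedcusps}. Since $\G_p'$ is normal of finite index in $\G_p$, the full quotient $\Hc_{int}/\G_p \cong \HH^d/\G_p$ (resp.\ $\Bc^d/\G_p$) is then by definition a standard (resp.\ bent) cusp.

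I expect the only genuine subtlety to be the passage from ``$\G_p'$ preserves $\Hc_{int}$'' to ``$\Hc_{int}$ is literally an epigraph of $f_c$ or $g_c$, so that the quotient is one of the model cusps rather than merely \emph{some} convex horoball-like region.'' This is really the same convex-invariant-set analysis that appears inside Lemma \ref{l:horoballapprox} — one shows the boundary of $\Hc_{int}$ is a $\G_p'$-invariant convex graph over $\RR^{d-1}$ (resp.\ $\RR^+ \times \RR^{d-2}$), and transitivity of $P_d$ (resp.\ $B_d$) on the leaves of the horosphere foliation forces it to coincide with a single leaf. Everything else is formal: combining Theorem \ref{t:holo_end_standorbend}, Lemma \ref{l:horoballapprox}, and Lemma \ref{l:precinv}, and then unwinding the definitions of standard and bent cusps. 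I would keep the written proof short, citing the three results and noting that precise invariance gives the embedded end neighborhood and that the model identification is exactly the analysis carried out in Lemma \ref{l:horoballapprox}.
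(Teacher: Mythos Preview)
Your proposal is correct and follows essentially the same route as the paper: invoke Theorem \ref{t:holo_end_standorbend} (the paper cites the contained Lemma \ref{l:latticeliegroup}), then Lemma \ref{l:horoballapprox}, then Lemma \ref{l:precinv}, and observe that the resulting precisely invariant horoball quotient is the end neighborhood. The one place you are more cautious than necessary is the ``subtlety'' about identifying $\Hc_{int}$ with a model horoball: in the proof of Lemma \ref{l:horoballapprox} the horoballs are \emph{constructed} as epigraphs of $f_c$ or $g_c$, so they are already projectively equivalent to $\HH^d$ or $\Bc^d$ by construction, and no further convex-invariant-set analysis is required.
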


\begin{proof}
Each end of $\mathcal{M}_t$ gives rise to a conjugacy class of peripheral subgroups. From Lemma \ref{l:latticeliegroup} we know that every peripheral subgroup $\G_p\subset \G_t$ is virtually a lattice in either $P_d$ or $B_d$. Furthermore, from Lemma \ref{l:horoballapprox} we see that for a $\G_p$-invariant horoball $\Hc_p\subset \O_t$. Finally, Lemma \ref{l:precinv} ensures that we can choose the $\Hc_p$ to be $(\G_t,\G_p)$-precisely invariant. As a result  we can find an invariant horoball in $\O_t$, and as a result the end corresponding to the conjugacy class of $\G_p$ is projectively equivalent to $\Hc_p/\G_p$. 
\end{proof}

\subsection{Volume of manifolds arising from bending}
We close this section by proving that the manifolds resulting from bending are always finite volume.  

\begin{theorem}\label{t:bendingfinitevolume}
 Let $M$ be a finite volume hyperbolic manifold and let $\Sigma$ be a finite volume totally geodesic hypersurface. Let $(\Mc_t=\Omega_t/\Gamma_t)_{t\in \R}$ be the projective manifolds obtained by bending $M$ along $\Sigma$, then $\Mc_t$ is a finite volume properly convex projective manifold. 
\end{theorem}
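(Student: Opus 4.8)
The plan is to reduce the finite-volume statement for $\Mc_t$ to the finite-volume statement for the cusp neighborhoods, which has already been established in Theorem \ref{t:fvcuspnbhd}. The key point is that a finite volume hyperbolic manifold $M$ is the union of a compact core $M_0$ together with finitely many cusp neighborhoods $E_1,\dots,E_k$, and after bending the same decomposition persists topologically: $\Mc_t = K_t \cup (\bigcup_i C_i)$ where $K_t$ is compact and each $C_i$ is one of the ends. So first I would record that $\Mc_t$ decomposes as a compact piece plus finitely many ends.

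Next I would bound the volume of the compact piece. Since $K_t$ is compact and contained in $\Omega_t/\Gamma_t$, its Busemann volume $\mu_{\Omega_t/\Gamma_t}(K_t)$ is finite simply because $\mu_{\Omega_t}$ is absolutely continuous with respect to Lebesgue measure and $K_t$ lifts to a relatively compact set in $\Omega_t$ (equivalently, $K_t$ is a compact subset of a smooth manifold with a smooth-almost-everywhere volume form). This step is routine.

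The substantive step is controlling the volume of the ends. By Corollary \ref{c:endclassification}, each end $C_i$ of $\Mc_t$ is projectively equivalent to $\Hc_{p_i}/\G_{p_i}$ where $\Hc_{p_i}$ is a horoball in $\O_t$ invariant and precisely invariant under the peripheral group $\G_{p_i}$, furnished by Lemmas \ref{l:horoballapprox} and \ref{l:precinv}. Now $\G_{p_i}$ contains a finite index subgroup $\G'_{p_i}$ which is a lattice in either $P_d$ or $B_d$, and $\Hc_{p_i}$ contains a smaller horoball $\Hc_{int}$ of the standard form (epigraph of $f_c$ or $g_c$). Passing to the finite cover $\Hc_{int}/\G'_{p_i}$, Theorem \ref{t:fvcuspnbhd} (applicable since $d\geq 3$) gives that $\Hc_{int}/\G'_{p_i}$ has finite volume \emph{with respect to the Hilbert metric of $\Hc_{int}$}. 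To conclude I would invoke the contravariance of Busemann volume, Proposition \ref{p:compa_busemann}: since $\Hc_{int}\subset \O_t$, for any Borel set $D$ contained in (a fundamental domain inside) $\Hc_{int}$ one has $\mu_{\O_t}(D)\leq \mu_{\Hc_{int}}(D)$. Hence $\mu_{\O_t/\G'_{p_i}}(\Hc_{int}/\G'_{p_i})\leq \mu_{\Hc_{int}/\G'_{p_i}}(\Hc_{int}/\G'_{p_i})<\infty$, and the part of $C_i$ not already covered by $\Hc_{int}/\G'_{p_i}$ is compact (it is sandwiched between two horoballs by Lemma \ref{l:horoballapprox}), so it also has finite volume by the argument of the previous paragraph; finally $\mu_{C_i}\leq [\G_{p_i}:\G'_{p_i}]\,\mu_{\Hc_{int}/\G'_{p_i}} + (\text{compact part})<\infty$. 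Summing the finitely many ends and the compact core gives $\mu_{\Mc_t}(\Mc_t)<\infty$.

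The main obstacle I anticipate is not any single hard estimate — all the hard analytic work has been pushed into Theorem \ref{t:fvcuspnbhd} — but rather the bookkeeping of matching up the three domains in play ($\O_t$, the approximating horoball $\Hc_{int}$ of standard form, and the model domains $\HH^d$ or $\Bc^d$) and making sure the volume comparison in Proposition \ref{p:compa_busemann} is applied in the correct direction (a smaller domain gives \emph{larger} Busemann volume, which is exactly what we want since $\Hc_{int}\subset\O_t$). One should also be slightly careful that the ``compact part'' of each end really is compact: this follows because it is a closed subset of $\Mc_t$ trapped between $\partial\Hc_{int}/\G_{p_i}$ and the boundary of the larger horoball $\Hc_{ext}/\G_{p_i}$, and the region between two such horoballs modulo the lattice is compact by the same epigraph description used in Lemma \ref{l:horoballapprox}.
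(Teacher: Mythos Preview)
Your overall strategy---decompose $\Mc_t$ into a compact core and finitely many cusp neighborhoods, bound the core trivially, and bound each cusp via Theorem~\ref{t:fvcuspnbhd} plus the monotonicity of Busemann volume---is exactly the paper's approach. There is, however, one genuine slip in how you apply Theorem~\ref{t:fvcuspnbhd} and Proposition~\ref{p:compa_busemann}.

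You assert that Theorem~\ref{t:fvcuspnbhd} gives $\Hc_{int}/\G'_{p_i}$ finite volume \emph{with respect to the Hilbert metric of $\Hc_{int}$}. This is false. The horoball $\Hc_{int}$ is itself projectively equivalent to $\HH^d$ (resp.\ $\Bc^d$), and under that identification $\G'_{p_i}$ is still a lattice in $P_d$ (resp.\ $B_d$); but the full quotient $\HH^d/\G'_{p_i}$ (resp.\ $\Bc^d/\G'_{p_i}$) has \emph{infinite} Busemann volume---e.g.\ in the upper-half-space model of $\HH^d$ a fundamental chimney $[0,1]^{d-1}\times(0,\infty)$ has $\int_0^\infty y^{-d}\,dy=\infty$. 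What Theorem~\ref{t:fvcuspnbhd} actually says is that a horoball $\Hc$ \emph{strictly inside} the model domain $\O\in\{\HH^d,\Bc^d\}$ has $\mu_\O(\Hc/\Lambda)<\infty$; it does not bound $\mu_\Hc(\Hc/\Lambda)$. Consequently your comparison $\mu_{\O_t}\leq\mu_{\Hc_{int}}$ is correct but useless, since the right-hand side is infinite.

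The fix is immediate and you already have the ingredient: you introduced two nested horoballs $\Hc_{int}\subsetneq\Hc_{p_i}\subset\O_t$. Apply Theorem~\ref{t:fvcuspnbhd} with $\O=\Hc_{p_i}$ (projectively a copy of $\HH^d$ or $\Bc^d$) and with $\Hc=\Hc_{int}$ as the inner horoball; this gives $\mu_{\Hc_{p_i}}(\Hc_{int}/\G'_{p_i})<\infty$. Then invoke Proposition~\ref{p:compa_busemann} with the inclusion $\Hc_{p_i}\subset\O_t$ (not $\Hc_{int}\subset\O_t$) to conclude $\mu_{\O_t}(\Hc_{int}/\G'_{p_i})\leq\mu_{\Hc_{p_i}}(\Hc_{int}/\G'_{p_i})<\infty$. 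This is precisely what the paper does (it calls the larger horoball $\Hc'$). With this correction the rest of your argument goes through unchanged.
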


\begin{proof}
 Only the finite volume assertion remains to be proven. Since $M$ is topologically tame it has finitely many ends. Hence the set of peripheral subgroups of $\G$ is finite up to conjugation by $\G$. In order to simplify the exposition we assume that $M$ has a single cusp. We first deal with the case where the cusp is bent. Let $\G_p$ be a peripheral subgroup of $\G$. By Lemmas \ref{l:precinv} and \ref{l:horoballapprox} we can find a horoball $\Hc_{int}$ that is $(\G_t,\G_p)$-precisely invariant under $\G_p$ and centered at the peripheral face of $\G_p$. 
 
Thus we see that $\Hc_{int}/\G_p$ is an embedded submanifold of $\Omega_t/\Gamma_t$ the closure of whose complement is compact. Thus the proof will be complete if we can show that $\Hc_{int}/\G_p$ has finite Busemann volume. Since $\G_p$ is virtually a lattice in $B_d$, we can find a finite index subgroup $\G'_p$ which is a lattice in $B_d$. Furthermore, $\Hc_{int}/\G'_p$ is a finite sheeted cover of $\Hc_{int}/\G_p$, and so without loss of generality we can assume that $\G_{p}= \G'_p$ and the proof will thus be complete if we can show that $\Hc_{int}/\G_p$ is a finite volume submanifold of $\O_t/\Gamma_t$. 
 
Let $\Hc'$ be a slightly larger precisely invariant horoball with the same center as $\Hc_{int}$ such that $\Hc_{int} \subset \Hc'\subset \Omega_t$. By Theorem \ref{t:fvcuspnbhd} we see that $\Hc_{int}/\G_p$ is a finite volume submanifold of $\Hc'/\G_p$. By comparison properties \ref{p:compa_busemann} of the Busemann volume this implies that $\Hc_{int}/\G_p$ is a finite volume submanifold of $\O_t/\Gamma_t$. 
 
In the case of a standard cusp, the argument is similar. We conclude by remarking that the ellipsoid is the projective model of the hyperbolic space and well-known estimates of volume in hyperbolic space gives the finiteness of the volume of a standard cusp.
\end{proof}

\section{Geometry of ends in terms of homology}\label{s:homology}

This section discusses the relationship between the topology of the pair $(M,\Sigma)$ and the geometry of the ends of $M$ after bending along $\Sigma$. The fact that the geometry of the cusps is determined completely by topological information is somewhat surprising in light of the previous observation that nature of the structure on the cusp depends on a projective structure on $S^1$ associated to each end (see Remark \ref{r:S1affstruct}). 

Let $T$ be a cusp cross section of $M$. Since $T$ is a flat $(d-1)$-manifold, it is finitely covered by a $(d-1)$-torus, $T^\ast$. Let $(T\cap \Sigma)^\ast$ be the complete preimage of $T\cap \Sigma$ in $T^\ast$ under the aforementioned covering. Concretely, $(T\cap \Sigma)^\ast$ is a union of parallel $(d-2)$-tori in $T^\ast$. The covering map provides each component with an orientation and as a result we get a homology class $[(T\cap \Sigma)^\ast]\in H_{d-2}(T^\ast;\ZZ)$. The following theorem shows that this homology class characterizes the type of the structure on the cusp corresponding to $T$.

\begin{theorem}\label{homologydeterminesbending}
 Let $M$ be a finite volume hyperbolic $d$-manifold and let $\Sigma$ be an embedded totally geodesic hypersurface, and let $(\Mc_t=\Omega_t/\Gamma_t)_{t\in \R}$ be the family of projective manifolds obtained by bending $M$ along $\Sigma$. If $T$ is a cusp cross section of one of the cusps of $M$ then for $t\neq 0$ the cusp corresponding to $T$ in $\Mc_t$ is a bent cusp if and only if the homology class $[(T\cap \Sigma)^\ast]\in H_{d-2}(T^\ast;\ZZ)$ is non-trivial. 
\end{theorem}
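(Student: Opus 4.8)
The plan is to trace through the bending construction and see that the homology class $[(T\cap\Sigma)^\ast]$ controls precisely the quantity $b$ (equivalently $\beta = e^b$) appearing in the action of $\rho_t(\gamma)$ on the pencil $C^\ast$, which by Lemma \ref{l:latticeliegroup} decides whether the cusp is standard ($b=0$) or bent ($b\neq 0$). Recall from the discussion preceding Theorem \ref{t:holo_end_standorbend} that if no cusp cross section of $\Sigma$ meets $T$ then $\rho_t(\Gamma_\infty)=\rho_0(\Gamma_\infty)$ is a lattice in $P_d$, so the cusp is standard; and in this case the preimage $(T\cap\Sigma)^\ast$ is empty, hence $[(T\cap\Sigma)^\ast]=0$. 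So we may assume some cusp cross sections $D_1,\ldots,D_m$ of $\Sigma$ meet $T$; these are mutually parallel $(d-2)$-tori in $T^\ast\cong \R^{d-1}/\Lambda$ with $\Lambda = \rho_0(\Gamma_\infty^{Tr})$ acting by translations, and the homology class $[(T\cap\Sigma)^\ast]\in H_{d-2}(T^\ast;\ZZ)\cong \Lambda$ is (Poincar\'e dual to) the primitive translation vector class determined by the common parallel direction, counted with the appropriate multiplicity coming from how many times a generator $\gamma$ of $\Gamma_\infty^{Tr}/\Delta_\infty^{Tr}$ crosses $\Sigma$.

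First I would set up coordinates as in Section \ref{s:paraboloid}: conjugate so that $\Gamma_\infty$ fixes $\infty$, $\Delta_\infty^{Tr}\subset P_{d-1}^0$, and the lift $\HH_0^{d-1}$ of $\Sigma$ is the hyperplane $x_2=0$. The subtlety is that $\gamma\in\Gamma_\infty^{Tr}$ need not lie in $\Delta_\infty^{Tr}$; rather, in the universal cover $\HH^d$, the geodesic from the base lift of $\Sigma$ back to itself under $\gamma$ must cross $k:=[(T\cap\Sigma)^\ast]$ (interpreted as the algebraic intersection number) intermediate lifts of $\Sigma$. Using the combinatorial description of $\widetilde M$ and the developing map \eqref{e:amalbenddev} or \eqref{e:HNNbenddev}, I would compute $\rho_t(\gamma)$ explicitly: it is a product of the form $\rho_0(\delta_0) c_t \rho_0(\delta_1) c_t^{\pm1}\cdots$, where the $\delta_i$ are the ``jumps'' of $\gamma$ between consecutive lifts of $\Sigma$ and each $c_t^{\pm1}$ records one crossing, with sign according to the orientation of the crossing. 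Conjugating each $c_t$ to the relevant lift by the intervening group elements, and using that all these lifts are parallel hyperplanes meeting $T$, the net effect collapses to $\rho_t(\gamma) = (\text{parabolic part}) \cdot c_t^{\,k}$-like expression, where $k$ is exactly the algebraic crossing number, i.e.\ the homology class $[(T\cap\Sigma)^\ast]$ paired against the $\Z$-factor $\Gamma_\infty^{Tr}/\Delta_\infty^{Tr}$.

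The key computation is then to read off the diagonal $(2,2)$-entry $\beta$ of $\rho_t(\gamma)$ from \eqref{e:rhogammaform}. From the explicit form of $c_t = \exp(tC)$ with $C$ as in \eqref{e:c_t}, conjugated into the appropriate position so that its nontrivial eigenvalue acts in the $e_2$-direction (the normal direction to $\HH_0^{d-1}$), each crossing of a lift of $\Sigma$ contributes a multiplicative factor $e^{(d+1)t}$ (or its inverse, depending on crossing sign) to $\beta$, while parabolic pieces contribute $1$. Hence $\beta = e^{(d+1)t\cdot n}$ where $n$ is the signed count of crossings; and this signed count is by definition the evaluation of the homology class $[(T\cap\Sigma)^\ast]$. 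Therefore $b = (d+1)t\cdot n$, which is nonzero for $t\neq 0$ if and only if $n\neq 0$ if and only if $[(T\cap\Sigma)^\ast]\neq 0$ in $H_{d-2}(T^\ast;\Z)$. By Lemma \ref{l:latticeliegroup}, $\rho_t(\gamma)$ is hyperbolic on $C^\ast$ precisely in that case, so $\rho_t(\Gamma_\infty^{Tr})$ is conjugate to a lattice in $B_d$ and the cusp is bent; otherwise it is conjugate to a lattice in $P_d$ and the cusp is standard. This completes the proof.

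\medskip

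The main obstacle I anticipate is bookkeeping the signs and the passage between the ``algebraic crossing number'' of $\gamma$ with the lifts of $\Sigma$ and the homology-class formulation: one must verify that the orientations assigned to the components of $(T\cap\Sigma)^\ast$ by the covering map are exactly compatible with the signs of the $c_t^{\pm1}$ factors (i.e.\ with which side of $\HH_0^{d-1}$ the developing map pushes things), and that reversing $\gamma$ or choosing a different representative of the conjugacy class of $\Gamma_\infty$ does not change $|n|$. This is a mild but genuinely necessary orientation argument, most cleanly handled by noting that $c_t$ acts on the dual pencil $C^\ast$ as a hyperbolic element whose translation length is $\pm(d+1)t$ with sign determined by which component of $\HH^d\setminus\HH_0^{d-1}$ is being expanded, and that these signs telescope along $\gamma$ exactly to the intersection number $[(T\cap\Sigma)^\ast]\cdot[\overline\gamma]$.
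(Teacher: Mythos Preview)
Your approach is correct and is essentially the same as the paper's, just phrased more algebraically. Both arguments reduce the question to computing the linear part of the action of $\rho_t(\gamma)$ on the pencil $C^\ast$ and identifying it (via Poincar\'e duality in $T^\ast$) with the algebraic intersection number $n$ of $\gamma$ with $(T\cap\Sigma)^\ast$. You track this linear part as the $(2,2)$-entry $\beta$ of $\rho_t(\gamma)$ in the block form \eqref{e:pn-1centralizer}, observing that all factors in the product expression for $\rho_t(\gamma)$ (parabolic pieces and conjugated copies of $c_t^{\pm1}$) lie in $\mathcal{Z}(P_{d-1}^0)$, where the $(2,2)$-entry is multiplicative; parabolic factors contribute $1$ and each bending factor contributes $e^{\pm(d+1)t}$, giving $\beta=e^{(d+1)tn}$. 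The paper instead passes to the induced affine structure on $S^1\cong \omega_t^\ast/\langle\overline\gamma\rangle$ (Remark~\ref{r:S1affstruct}), describes its developing map $\overline D_t$ as $\overline D_0$ postcomposed with a dilation by $e^{\pm t}$ at each signed crossing point, and reads off the linear part of $\overline\rho_t(\overline\gamma)$ as $e^{(a-b)t}$ by comparing the images of two nearby points. These are the same computation viewed from the holonomy side and the developing side, respectively; your version is arguably more direct, while the paper's makes the $S^1$-structure picture of Remark~\ref{r:S1affstruct} explicit. (Incidentally, your constant $(d+1)t$ is the one dictated by the paper's definition of $c_t$ in \eqref{e:c_t}; the paper's $e^t$ amounts to a harmless reparametrization.) The orientation/sign bookkeeping you flag as the main obstacle is exactly what the paper handles by assigning signs $\varepsilon_i$ to the projected crossing points and checking $a-b$ equals the intersection number; your telescoping description is an equivalent way to package it.
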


\begin{proof}
 Let $\rho_t$ be the holonomy representation for the projective structure resulting from bending $M$ along $\Sigma$ and let $\Omega_t=D_t(\HH^d)$ be the (properly convex) image of the developing map for the aforementioned structure. If $\Sigma\cap T=\varnothing$ then it is clear that $[(T\cap\Sigma)^\ast]=0$. We have previously seen that in this case that the projective structure on the cusp corresponding to $T$ remains standard. Thus we can assume that $\Sigma$ intersects $T$.
 
Let $\Gamma_\infty$ be a peripheral subgroup for $T$, let $\Delta_\infty$ be a peripheral subgroup for one of the (parallel) cusp cross sections of $\Sigma$ that intersect $T$, and let $\gamma\in \G^{Tr}_\infty$ be an element whose image generates $\G^{Tr}_\infty/\Delta^{Tr}_\infty$. By Lemma \ref{l:precinv} we can find for each $t$ a horoball $\mathcal{H}_t\subset \Omega_t$ that is $(\rho_t(\G),\rho_t(\G_\infty))$-precisely invariant. Let $H_0=D_t^{-1}(\Hc_t)\subset \HH^d$. It is easy to see that $H_0$ is $(\G,\G_\infty)$-precisely invariant and it is not hard to see that $H_0$ is at bounded distance from a $(\G,\G_\infty)$-precisely invariant horoball. The cusp of $M$ corresponding to $T$ is a bent cusp if and only if $\mathcal{
H}_t/\rho_t(\Gamma_\infty^{Tr})$ is a bent cusp and so we turn our attention to this simpler projective manifold.
 
 There is a unique foliation of $H_0$ by a pencil of hyperplanes on which the action of $\Delta_\infty^{Tr}$  \merde{preserves each leaf of the foliation}. We call this pencil $C^\ast$ and we see that $C^\ast_t=D_t(C^\ast)$ gives rise to a foliation of $\mathcal{H}_t$ on which the action of $\rho_t(\Delta_\infty^{Tr})$  \merde{preserves each leaf of the foliation}. 
 
As a result the developing map $D_t:H_0\to \mathcal{H}_t$ induces a map $\overline{D}_t:\RR\to \R$ corresponding to collapsing the hyperplanes in $C^\ast$ and $C_t^\ast$ to points. Concretely, $\overline{D}_t$ is the developing map for the affine structure on $S^1$ mentioned in Remark \ref{r:S1affstruct}. Each of these affine structures gives rise to a holonomy representation $$\overline{\rho}_t:\ZZ\cong \Gamma_\infty^{Tr}/\Delta_\infty^{Tr}\to \mathrm{Aff}(\RR).$$

 
 \merde{Let $\overline{\gamma}$ be a generator of $\Gamma_\infty^{Tr}/\Delta_\infty^{Tr}$ and let $\gamma$ be an element of $\Gamma_\infty^{Tr}$ that projects to $\overline{\gamma}$}. We can regard $\gamma$ as a curve in $T^\ast$ and by Poincar\'e duality we see that $[(T\cap \Sigma)^\ast]=0$ if and only if the algebraic intersection of $\gamma$ with $(T\cap \Sigma)^\ast$ is zero. Let $\{t_i\}_{i=1}^k$ be the set of components of $(T\cap \Sigma)^\ast$. When we project from $T^\ast$ to $S^1$ each $t_i$ projects to a signed point, $(p_i,\varep_i)$, where $p_i$ in $S^1$ and $\varep_i=\pm1$ according to the algebraic intersection of the corresponding component with $\gamma$.  Let $a$ be the number of signed points where $\varep_i=1$ and $b$ be the number of signed points where $\varep_i=-1$. It is easy to see that $[(T\cap \Sigma)^\ast]=0$ if and only if $a=b$.  
 
 We now turn our attention to the developing map $\overline{D}_t$. When $t=0$ the developing map has image $\RR$. By conjugating by an element of $\Aff(\RR)$ we can assume that $\overline{\rho}_0(\overline{\gamma})$ is the translation $x\mapsto x+1$.  Each signed point $(p_i,\varep_i)$ can be lifted to a unique signed point in the interval $[0,1]\subset \RR$, which by abuse of notation we also call $(p_i,\varep_i)$. By renumbering, if necessary, we can assume that $p_i<p_j$ whenever $i<j$.
 
 The developing map $\overline{D}_t$ is obtained by successively modifying $\overline{D}_0$ in the following way. Each $p_i$ divides $\RR$ into two halves and $\overline{D}_t$ is obtained post composing the right half by the element of $\mathrm{Aff}(\RR)$ that fixes $p_i$ and whose linear part is multiplication by $e^t$ (resp.\ $e^{-t}$) if $\varep_i=1$ (resp.\ $\varep_i=-1$). 
  
As we have seen, $\mathcal{H}_t/\rho_t(\Gamma_\infty^{Tr})$ is a bent cusp if and only if $\overline{\rho}_t(\overline{\gamma})$ is a hyperbolic element of $\Aff(\RR)$. This is equivalent to $\overline{\rho}_t(\overline{\gamma})$ being a similarity of $\RR$, rather than an isometry.

Let $\delta>0$ be such that $\delta<p_1$. Under our previous identification we see that the points $0$ and $\delta$ are mapped by $\overline{\gamma}$ to $1$ and $1+\delta$, respectively. By equivariance, we see that $\overline{\rho}_t(\overline{\gamma})$ must map $\overline{D}_t(0)$ to $\overline{D}_t(1)$ and $\overline{D}_t(\delta)$  to $\overline{D}_t(1+\delta)$. By construction, $0$ and $\delta$ are to the left of all the $p_i$; and $1$ and $1+\delta$ are to the right of all the $p_i$. As a result we see that the distance between $\overline{D}_t(0)$ and $\overline{D}_t(\delta)$ is $\delta$ and the distance between $\overline{D}_t(1)$ and $\overline{D}_t(1+\delta)$ is $e^{(a-b)t}\delta$. Thus we see that $\overline{\rho}_t(\overline{\gamma})$ is an isometry if and only if $a=b$.  
\end{proof}

Theorem \ref{homologydeterminesbending} has the following immediate corollary

\begin{corollary}\label{c:strictconvexbending}
Under the hypotheses of Theorem \ref{homologydeterminesbending}; if $\Sigma$ is separating then each cusp $M$ remains standard after bending along $\Sigma$. Consequently, the projective structures obtained by bending along $\Sigma$ are all strictly convex. 
\end{corollary}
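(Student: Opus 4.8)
By Corollary \ref{c:endclassification} every end of $\Mc_t$ is a standard cusp or a bent cusp, so the plan has two steps: (i) show that when $\Sigma$ separates $M$ no end of $\Mc_t$ is a bent cusp, and (ii) deduce from the general theory of geometrically finite convex projective manifolds that $\O_t$ is then strictly convex. For (i), fix a cusp cross section $T$ of $M$. By Theorem \ref{homologydeterminesbending}, for $t\neq 0$ the end of $\Mc_t$ associated to $T$ is a bent cusp if and only if the class $[(T\cap\Sigma)^\ast]$ is non-zero in $H_{d-2}(T^\ast;\ZZ)$, so it is enough to show this class vanishes for every such $T$. This is trivial if $T\cap\Sigma=\varnothing$, so assume $T$ meets $\Sigma$.

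A separating hypersurface is two-sided, and $M\smallsetminus\Sigma=M_1\sqcup M_2$ with the property that any arc crossing $\Sigma$ transversally in a single point has its endpoints in the two distinct pieces $M_1,M_2$. Write $(T\cap\Sigma)^\ast=t_1\sqcup\dots\sqcup t_k$; these parallel $(d-2)$-tori cut $T^\ast$ into $k$ bands $B_1,\dots,B_k$, cyclically ordered along the circle fibration $T^\ast\to S^1$ used in the proof of Theorem \ref{homologydeterminesbending}. Each $B_j$ maps, under $T^\ast\to T\hookrightarrow M$, into $M_1$ or into $M_2$, and passing across any wall $t_i$ interchanges the two; hence the labels of $B_1,\dots,B_k$ alternate around the cycle and $k$ is even. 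Letting $\overline{U^\ast}\subset T^\ast$ be the closure of the union of the bands labelled $M_1$, the alternation shows that every $t_i$ lies on the frontier of $\overline{U^\ast}$, so with the $t_i$ oriented by the coorientation of $\Sigma$ one has $(T\cap\Sigma)^\ast=\partial\overline{U^\ast}$, which is null-homologous in $T^\ast$. (Equivalently, in the notation of the proof of Theorem \ref{homologydeterminesbending}, the successive crossings of the generating loop $\gamma$ with $t_1,\dots,t_k$ have alternating signs, according as $\gamma$ passes from the $M_1$-side to the $M_2$-side or conversely, so $a=b=k/2$.) Thus $[(T\cap\Sigma)^\ast]=0$ and every end of $\Mc_t$ is a standard cusp.

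For (ii), recall that $\G_t\cong\pi_1(M)$ is the fundamental group of a finite volume hyperbolic manifold and hence is hyperbolic relative to its peripheral subgroups — a fact already used in the proof of Lemma \ref{l:precinv}. By part (i) every peripheral subgroup of $\G_t$ is virtually a lattice in a conjugate of $P_d$, so it fixes a single point of $\partial\O_t$ and preserves no non-trivial segment of $\partial\O_t$. Together with the finiteness of volume (Theorem \ref{t:bendingfinitevolume}), this places $\Mc_t$ in the scope of the characterization of strict convexity for geometrically finite convex projective manifolds (Crampon--Marquis; Cooper--Long--Tillmann): a finite volume properly convex manifold whose fundamental group is relatively hyperbolic with respect to its peripheral subgroups is strictly convex once all of its ends are standard cusps, the only obstruction to strict convexity being a bent cusp, whose edge $s_\infty$ would be a segment in $\partial\O_t$ preserved by a peripheral subgroup. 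Hence $\O_t$ is strictly convex for every $t$ (and trivially for $t=0$).

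The real content is part (i); the one point needing care there is the identification of the homology class $[(T\cap\Sigma)^\ast]$ with the oriented boundary $\partial\overline{U^\ast}$, i.e. matching the orientation conventions, but this is routine once the two-sidedness of $\Sigma$ and the cyclic alternation of the bands $B_j$ are in hand. Part (ii) is a citation whose hypotheses — finite volume and the classification of the ends as standard cusps — are supplied by Theorem \ref{t:bendingfinitevolume} and part (i); the only mild subtlety is to quote the exact form of the relatively-hyperbolic/strictly-convex dictionary in the geometrically finite category.
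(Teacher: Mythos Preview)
Your proof is correct. For part (i) you take a more elementary route than the paper: you directly exhibit a bounding chain $\overline{U^\ast}$ for $(T\cap\Sigma)^\ast$ inside $T^\ast$ by exploiting the alternation of $M_1$/$M_2$ labels on the bands, whereas the paper argues more globally by noting that a separating $\Sigma$ has $[\Sigma]=0$ in $H_{d-1}(M;\ZZ)$, hence $[T\cap\Sigma]=0$ in $H_{d-2}(T;\ZZ)$, and then passes to $T^\ast$ via the transfer homomorphism (over $\QQ$). Your argument is self-contained and avoids the transfer map; the paper's is shorter but presupposes the reader knows why $[\Sigma]=0$ forces $[T\cap\Sigma]=0$ and how transfer behaves. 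For part (ii) both you and the paper invoke the same external result (the paper cites \cite[Thm~11.6]{CooperLongTillmann15}); your surrounding discussion of relative hyperbolicity and the role of the segment $s_\infty$ is accurate but more than the paper spells out.
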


\begin{proof}
 If $\Sigma$ is separating then $[\Sigma]\in H_{d-1}(M;\ZZ)$ is trivial and thus $[T\cap \Sigma]\in H_{d-2}(T;\ZZ)$ is trivial for any cusp cross section $T$. The proof is completed by observing that $[(T\cap \Sigma)^\ast]$ is just a multiple of the image of $[T\cap \Sigma]$ under the transfer homomorphism from $H_{d-2}(T;\QQ)$ to $H_{d-2}(T^\ast;\QQ)$. Strict convexity of the resulting structures follows from \cite[Thm 11.6]{CooperLongTillmann15}.
\end{proof}

\section{Examples}\label{s:examples}

In this section we discuss examples of properly convex manifolds that arise from bending. The main results of this section are Theorem \ref{examples_strict_convex} and Theorem \ref{examples}, which show that there are examples of both strictly convex and properly, but not strictly convex finite volume manifolds in every dimension above 2.

\subsection{A 3-manifold with both standard and bent cusps}

We begin by describing a concrete 3-dimensional example. Let $M$ be the complement in $S^3$ of the Whitehead link. This manifold has two cusp cross sections $T_1$ and $T_2$ given by taking regular neighborhoods of the components $C_1$ and $C_2$ of the link (see Figure \ref{whitehead}). The manifold $M$ also contains a totally geodesic pair of pants $S$. This surface intersects $T_1$ in a single curve and so $[S\cap T_1]$ is a non trivial homology class in $H_1(T_1;\ZZ)$. By Theorem \ref{homologydeterminesbending} we see that when we bend $M$ along $S$ the cusp corresponding to $T_1$ becomes a bent cusp. 

On the other hand, $S$ intersects $T_2$ in two parallel, but oppositely oriented curves in $T_2$ and so we see that $[S\cap T_2]$ is a trivial class in $H_1(T_2;\ZZ)$ and so Theorem \ref{homologydeterminesbending} tells us that bending $M$ along $S$ results in the cusp corresponding to $T_2$ to remain standard. 

\begin{center}
 \begin{figure}[h]
  \includegraphics[scale=.4]{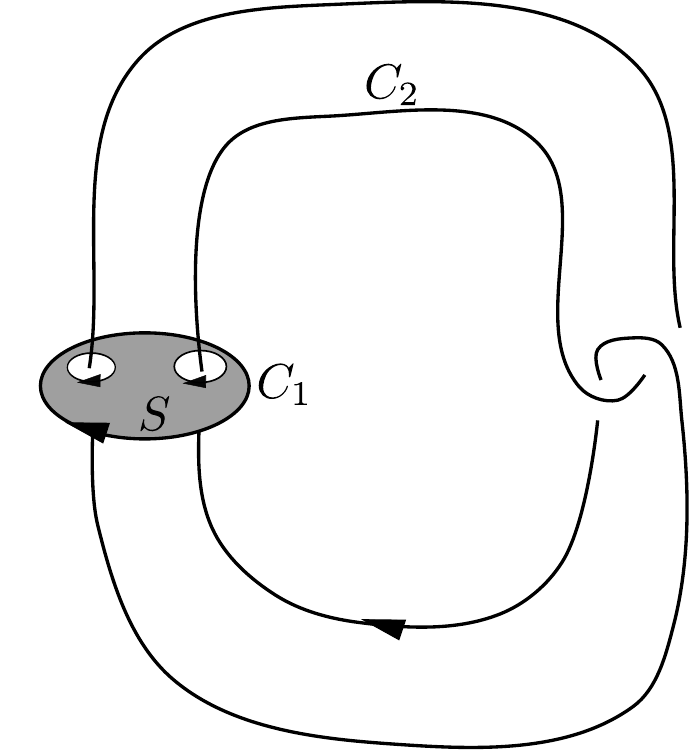}
  \caption{\label{whitehead}The Whitehead link contains a totally geodesic pair of pants }
 \end{figure}

\end{center}

\subsection{Non-strictly convex Examples}

Next, we show that for each dimension $d\geqslant 3$ there are properly convex manifolds with bent cusps. The precise statement of the result is:

\begin{theorem}\label{examples}
For each $d\geqslant 3$ there exists a properly convex $d$-manifold $M$ such that $M$ has finite volume and contains an end which is a bent cusp.  
\end{theorem}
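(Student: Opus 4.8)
The plan is to realize Theorem \ref{examples} as a direct consequence of the machinery developed in Sections \ref{s:bending}--\ref{s:homology}, once we exhibit, in each dimension $d \geq 3$, a finite volume hyperbolic $d$-manifold $M$ containing an embedded finite volume totally geodesic hypersurface $\Sigma$ such that at least one cusp cross section $T$ of $M$ meets $\Sigma$ in a set whose associated homology class $[(T\cap\Sigma)^\ast] \in H_{d-2}(T^\ast;\ZZ)$ is nonzero. Given such a pair $(M,\Sigma)$, bending yields (Theorem \ref{t:prop_conv}) a properly convex projective structure $\Mc_t = \O_t/\G_t$ on $M$, which has finite Busemann volume by Theorem \ref{t:bendingfinitevolume}; and by Theorem \ref{homologydeterminesbending} the cusp corresponding to $T$ is a bent cusp for $t\neq 0$. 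So the entire content of the theorem reduces to producing the topological input.

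For the construction of $(M,\Sigma)$ I would use an arithmetic hyperbolic manifold of simplest type, built from a quadratic form $q$ of signature $(d,1)$ over $\QQ$ (or over a totally real field) that is isotropic over $\QQ$, so that the associated lattice $\G_0 \subset \PO(q)$ is non-uniform and the quotient $\HH^d/\G_0$ is finite volume and non-compact. Restricting to a rational codimension-one subspace on which $q$ has signature $(d-1,1)$ produces a totally geodesic finite volume hypersurface; this is precisely the setting of Millson's and subsequent constructions of totally geodesic submanifolds. Passing to a suitable finite index torsion-free congruence subgroup $\G$, a theorem of the type used by Johnson--Millson \cite{JoMi} (and Bergeron--Haglund--Wise type separability arguments) guarantees that we may arrange $\Sigma$ to be \emph{embedded}. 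The key extra requirement — that $\Sigma$ be non-separating, equivalently that $[\Sigma]\neq 0$ in $H_{d-1}(M;\ZZ/2)$, and moreover that $\Sigma$ meet some cusp $T$ with nonzero algebraic intersection number against a loop in $T$ — can be arranged either by a direct count on an explicit example (mirroring the Whitehead link computation in dimension $3$, where $S$ meets $T_1$ in a single curve) or by passing to a further finite cover in which the relevant intersection numbers become unbalanced; by Corollary \ref{c:strictconvexbending}, a separating $\Sigma$ would force all cusps to stay standard, so non-separating is exactly what we need and this is the crux.

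Concretely, in dimension $d=3$ the Whitehead link complement with its totally geodesic pair of pants $S$ already works (the cusp $T_1$ meeting $S$ in one curve becomes bent), giving the base case. For general $d$ I would take the $(d{-}1)$-dimensional example together with a totally geodesic $(d{-}2)$-dimensional subexample and ``suspend'' by choosing an ambient arithmetic $d$-manifold whose hypersurface $\Sigma$ restricts, on some cusp $T$, to a single parallel family of $(d-2)$-tori all coherently oriented — so that $[(T\cap\Sigma)^\ast]\neq 0$. The main obstacle is purely existential: verifying that in every dimension one can simultaneously achieve (i) embeddedness of $\Sigma$, and (ii) a cusp where the intersection pattern is homologically nontrivial. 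I expect (i) to follow from standard separability/subgroup-separability results for these arithmetic groups, and (ii) to follow from choosing the rational subspace and the congruence level so that exactly one cusp of $M$ corresponds to an isotropic line lying in the subspace, forcing a single oriented intersection there; this parallels the Whitehead computation and is the step that must be carried out with some care but involves no deep new idea. Once $(M,\Sigma)$ is in hand, the theorem follows immediately by citing Theorems \ref{t:prop_conv}, \ref{t:bendingfinitevolume}, and \ref{homologydeterminesbending}.
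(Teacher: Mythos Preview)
Your overall strategy matches the paper's: start from the arithmetic lattice $\hat\Gamma=\PSO(Q_d)\cap\PSL_{d+1}(\ZZ)$ together with the sub-arithmetic group $\hat\Lambda$ stabilizing $\HH^{d-1}_0$, pass to finite index to obtain a manifold $M$ (with torus cusps) containing an embedded non-separating finite volume totally geodesic hypersurface $\Sigma$, and then invoke Theorems \ref{t:prop_conv}, \ref{t:bendingfinitevolume}, and \ref{homologydeterminesbending}. The reduction of the whole problem to exhibiting a cusp $T$ with $[(T\cap\Sigma)^\ast]\neq 0$ is exactly right.

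The genuine gap is your step (ii). Saying one can ``choose the rational subspace and the congruence level so that exactly one cusp of $M$ corresponds to an isotropic line lying in the subspace'' is not a mechanism: the issue is precisely that several cusps of $\Sigma$ (several $\Lambda$-orbits of isotropic lines in the subspace) may fall into a single $\Gamma$-orbit, and nothing about congruence conditions by themselves separates them. Your ``suspension'' fallback is likewise not a construction. The paper resolves this as follows. After arranging an embedded non-separating $\Sigma$ (via \cite{Bergeron00} and \cite{collisions}), one picks a cusp $T$ of $M$ met by $\Sigma$; the intersection $T\cap\Sigma$ consists of $k$ parallel $(d-2)$-tori $t_1,\ldots,t_k$. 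If $k=1$ we are done. If $k>1$, the key input is that arithmetic hyperbolic groups \emph{virtually retract} onto their geometrically finite subgroups \cite{BergHagWise}: there is a finite index $\Gamma'\leqslant\Gamma$ containing $\Lambda$ and a homomorphism $r:\Gamma'\to\Lambda$ restricting to the identity on $\Lambda$. The peripheral subgroups $\tau_i=\pi_1(t_i)$ are pairwise non-conjugate in $\Lambda$ (they correspond to distinct cusps of $\Sigma$); applying $r$ shows they remain pairwise non-conjugate in $\Gamma'$. Hence in the cover $M'=\HH^d/\Gamma'$ the distinct cusps of $\Sigma$ land in distinct cusps of $M'$, forcing $k=1$ there and thus $[(T'\cap\Sigma)^\ast]\neq 0$. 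This virtual retraction step is the substantive ingredient you did not identify; it is a real theorem, not merely ``some care''.
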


\begin{proof}
 Let $\hat\Gamma=\PSO(Q_d)\cap \PSL_{d+1}(\ZZ)$ and $\hat\Lambda=\PSO(Q_d;d-1,1)\cap \hat \Gamma$. It is well known (see \cite{BorelHarishChandra}) that $\hat M:=\HH^d/\hat\Gamma$ is a non-compact finite volume orbifold that contains a totally geodesic immersion of the non-compact finite volume orbifold $\hat \Sigma:=\HH^{d-1}/\hat\Lambda$. By combining work of \cite{millson,Bergeron00} and \cite{collisions} we can find finite index subgroups $\Gamma\leqslant \hat \Gamma$ and $\Lambda\leqslant \hat\Lambda$ such that $M=\HH^d/\Gamma$ is a manifold containing a totally geodesic non-separating embedding of $\Sigma=\HH^{d-1}/\Lambda$ and whose cusp cross sections are all $(d-1)$-dimensional tori. 
 
Since $\Sigma$ is non-compact there is a cusp cross section, $T$, of $M$ that has non trivial intersection with $T$. Since $\Sigma$ is embedded we see that $\Sigma\cap T=\sqcup_{i=1}^k t_i$, where the $t_i$ are parallel $(d-2)$-dimensional tori embedded in $T$. 
 
Suppose that $k=1$. Then bending $M$ along $\Sigma$ will result in the cusp corresponding to $T$ becoming a bent cusp. If $k>1$ and $\Sigma$ is non-separating then it is possible that bending $T$ along the various component of $\Sigma\cap T$ will result in cancellation, in which case the cusp will remain standard. However, we claim that by passing to a finite sheeted cover of $M$ we can always arrange that $k=1$. This can be seen as follows: 
 
 Let $\tau_i$ be the fundamental group of $t_i$. Since each $t_i$ is contained in $T$ we see that the $\tau_i$ are all conjugate subgroups of $\Gamma$. However, since $\Sigma$ is an \merde{embedded} totally geodesic submanifold each subgroup $\tau_i$ corresponds to a distinct cusp cross section of $\Sigma$ and so the subgroups $\tau_i$ are pairwise non-conjugate subgroups of $\Lambda$.
 
 By construction, $M$ is an arithmetic manifold and so $\Gamma$ virtually retracts onto $\Lambda$ (see Theorem 1.4 and the comments at the end of \S9 in \cite{BergHagWise} for details). That is to say there is a finite index subgroup $\Gamma'$ of $\Gamma$ that contains $\Lambda$ and a homomorphism $r:\Gamma'\to \Lambda$ that restricts to the identity on $\Lambda$. Since $\Lambda\leqslant \Gamma'$ the embedding of $\Sigma$ into $M$ lifts to an embedding into $M'=\HH^d/\Gamma'$. The proof will be complete if we can show that the $\tau_i$ are pairwise non-conjugate in $\Gamma'$. This is done in \cite{collisions}, but the proof is short and so we include it for the sake of completeness. Suppose for contradiction that two of these subgroups, say $\tau_1$ and $\tau_2$, are conjugate in $\Gamma'$. Without loss of generality we can assume that there exists $\gamma\in \Gamma'$ such that $\gamma\tau_1\gamma^{-1}=\tau_2$. Since $\tau_1$ and $\tau_2$ are both subgroups of $\Lambda$ we see that
 $$\tau_2=r(\tau_2)=r(\gamma\tau_1\gamma^{-1})=r(\gamma)r(\tau_1)r(\gamma)^{-1}=r(\gamma)\tau_1r(\gamma)^{-1}.$$
Thus the groups $\tau_1$ and $\tau_2$ are conjugate in $\Lambda$, which is a contradiction. 
 \end{proof}

An immediate corollary of Theorem \ref{examples} is the following, which provides a partial answer to Question 3 in \cite{Marquis13}
\begin{corollary}
 In each dimension $d\geqslant 3$ there exist properly, but not strictly-convex manifolds with finite volume. 
\end{corollary}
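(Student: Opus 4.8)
The plan is to read the corollary directly off Theorem~\ref{examples}. Fix $d \geqslant 3$ and let $M = \Omega/\Gamma$ be the finite volume properly convex $d$-manifold produced by Theorem~\ref{examples}, which has an end projectively equivalent to a bent cusp. Since $M$ is already a finite volume properly convex manifold, the only thing left to verify is that $\Omega$ is \emph{not} strictly convex.

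For this I would invoke the structure of the bent cusp end established in \S\ref{s:bent_ends}. By Lemma~\ref{l:horoballapprox} (see also Corollary~\ref{c:endclassification}) the end in question is covered by a $\Gamma_p$-invariant horoball $\Hc_{int} \subset \Omega$ centered at a peripheral face $s_p \subset \partial\Omega$, and because the cusp is \emph{bent}, the peripheral subgroup $\Gamma_p$ is virtually a lattice in a conjugate of $B_d$ with two distinct fixed points $p_+$ and $p_-$, and $s_p$ is precisely the non-degenerate segment $[p_+,p_-]$ lying in $\partial\Omega$ --- in the model coordinates of \S\ref{s:generalizedcusps} this is the segment $s_\infty = [e_1,e_2] \subset \partial\Bc^d$. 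Thus $\partial\Omega$ contains a non-trivial line segment, so $\Omega$ is not strictly convex by definition. Consequently $M = \Omega/\Gamma$ is a finite volume properly, but not strictly, convex $d$-manifold, which is the assertion.

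There is essentially no obstacle here: all the real work has been done in Theorem~\ref{examples} and Lemma~\ref{l:horoballapprox}, and the one-line content is that a bent cusp forces a segment into the boundary of the ambient convex domain. If one prefers to see this without re-reading the proof of Lemma~\ref{l:horoballapprox}, note that the sandwich $\Hc_{int} \subset \Omega \subset \Hc_{ext}$ of that lemma, with both horoballs centered at the same face $s_p$, already forces $s_p \subset \overline{\Omega}$ while $s_p \cap \Hc_{ext} = \varnothing$ (hence $s_p \cap \Omega = \varnothing$), so $s_p \subset \overline{\Omega} \setminus \Omega = \partial\Omega$, and $s_p$ is a non-degenerate segment because it is the peripheral face of a bent, not a standard, cusp.
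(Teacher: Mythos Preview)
Your proposal is correct and matches the paper's approach: the paper states this as an ``immediate corollary of Theorem~\ref{examples}'' with no further proof, and what you have written is precisely the one-line content behind that word ``immediate'' --- a bent cusp end forces the peripheral face $s_p$ (the segment $[p_+,p_-]$ described in the proof of Lemma~\ref{l:horoballapprox}) into $\partial\Omega$, so $\Omega$ cannot be strictly convex. Your use of the sandwich $\Hc_{int}\subset\Omega\subset\Hc_{ext}$ to pin down $s_p\subset\partial\Omega$ is a clean way to make this explicit.
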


\subsection{Strictly convex examples} In this subsection we show how to construct examples for which bending gives rise to strictly convex projective structures. 

\begin{theorem}\label{examples_strict_convex}
For each $d\geqslant 3$ there exists a strictly convex $d$-manifold $M=\Quo$ such that $M$ has finite volume and $\O$ is strictly convex.
\end{theorem}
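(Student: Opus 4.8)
The plan is to mirror the construction in the proof of Theorem~\ref{examples}, but now arrange that the totally geodesic hypersurface $\Sigma$ is \emph{separating}, so that Corollary~\ref{c:strictconvexbending} applies directly. Concretely, I would again start from an arithmetic hyperbolic $d$-manifold: take $\hat\Gamma = \PSO(Q_d)\cap \PSL_{d+1}(\ZZ)$ and $\hat\Lambda = \PSO(Q_d;d-1,1)\cap\hat\Gamma$, so that $\HH^d/\hat\Gamma$ contains a totally geodesic immersed copy of $\HH^{d-1}/\hat\Lambda$. By the separability results of Bergeron--Haglund--Wise (together with Bergeron's work on arithmetic hyperbolic manifolds, as in the proof of Theorem~\ref{examples}), there is a finite-index torsion-free subgroup $\Gamma\leqslant\hat\Gamma$ such that $M=\HH^d/\Gamma$ contains an \emph{embedded} totally geodesic finite-volume hypersurface $\Sigma = \HH^{d-1}/\Lambda$ with $\Lambda\leqslant\hat\Lambda$. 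The only new requirement compared to Theorem~\ref{examples} is that we want $\Sigma$ to be separating in $M$, equivalently that $[\Sigma]=0$ in $H_{d-1}(M;\ZZ/2)$.

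To achieve this I would use a standard covering-space trick: the mod-$2$ class $[\Sigma]\in H_{d-1}(M;\ZZ/2)\cong H^1(M;\ZZ/2)^\ast$ is Poincar\'e dual to a homomorphism $\phi:\pi_1(M)\to\ZZ/2$ detecting whether a loop crosses $\Sigma$ an odd number of times. If $[\Sigma]\neq 0$, pass to the index-$2$ cover $\widetilde M\to M$ corresponding to $\ker\phi$. By construction every loop in $\widetilde M$ has even intersection number with the preimage $\widetilde\Sigma$, so each component of $\widetilde\Sigma$ lifts and the total preimage $\widetilde\Sigma$ is null-homologous mod $2$ in $\widetilde M$; replacing $M$ by $\widetilde M$ and $\Sigma$ by one component of $\widetilde\Sigma$ (or by $\widetilde\Sigma$ itself) we may assume $\Sigma$ is separating. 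One must check that $\widetilde\Sigma$ is still embedded and totally geodesic, which is immediate since covering maps are local isometries and the preimage of an embedded submanifold under a cover is embedded; and that $\widetilde M$ still has finite volume, which is clear. (If one insists on the cusp cross sections being tori as well, one can intersect with a further finite-index subgroup as in the proof of Theorem~\ref{examples}; this is not needed for the present statement, but harmless.)

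With $\Sigma$ separating, apply the bending construction of Section~\ref{s:bending} to obtain the family $\Mc_t = (\O_t,\Gamma_t)$ of properly convex projective structures on $M$; by Theorem~\ref{t:prop_conv} each $\O_t$ is properly convex, and by Theorem~\ref{t:bendingfinitevolume} each $\Mc_t$ has finite Busemann volume. By Corollary~\ref{c:strictconvexbending}, since $\Sigma$ is separating, every cusp of $\Mc_t$ remains a standard cusp, and hence $\O_t$ is strictly convex for all $t$ (this is exactly the final assertion of Corollary~\ref{c:strictconvexbending}, via \cite[Thm 11.6]{CooperLongTillmann15}). Finally, by Theorem~\ref{zariskidense} the groups $\Gamma_t$ are Zariski dense for $t\neq 0$, so the structure is genuinely not a hyperbolic structure; fixing any $t\neq 0$ gives the desired strictly convex finite-volume $d$-manifold $M=\O/\Gamma$ with $\O$ strictly convex.

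The main obstacle is ensuring that such an $M$ actually contains an \emph{embedded}, finite-volume, totally geodesic, \emph{separating} hypersurface $\Sigma$ with the bending deformation being non-trivial --- i.e.\ that after passing to the finite cover needed to make $\Sigma$ separating, the hypersurface does not become homotopically trivial or disconnected in a way that kills the deformation, and that $\rho_0|_\Lambda$ still lands in $\PSO(Q_d;d-1,1)$ so that the centralizer element $c_t$ can be used. All of this follows from the separability/virtual-embedding machinery already invoked in Theorem~\ref{examples} combined with the elementary mod-$2$ covering argument above, so the step is routine in hindsight; the only genuinely delicate point is citing the right separability statement (Bergeron--Haglund--Wise, \cite{BergHagWise}, plus \cite{Bergeron00}) to guarantee the embedded totally geodesic hypersurface exists in the first place.
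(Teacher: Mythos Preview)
Your overall strategy—produce a separating totally geodesic hypersurface and then invoke Corollary~\ref{c:strictconvexbending} and Theorem~\ref{t:bendingfinitevolume}—is exactly the paper's strategy. The gap is in your mod-$2$ covering step. Passing to the double cover $\widetilde M$ associated to the intersection homomorphism $\phi:\pi_1(M)\to\ZZ/2$ makes the \emph{total} preimage $\widetilde\Sigma=\Sigma_1\sqcup\Sigma_2$ null-homologous mod $2$, but a single component $\Sigma_1$ need not separate. (Think of a non-separating simple closed curve on a closed surface: in the associated double cover each lift is still non-separating, and this phenomenon persists in higher dimensions.) Your parenthetical ``or by $\widetilde\Sigma$ itself'' does not rescue the argument as written: the bending construction in Section~\ref{s:bending}, Theorem~\ref{t:prop_conv}, and Corollary~\ref{c:strictconvexbending} are all formulated for a connected $\Sigma$, and extending them to a two-component hypersurface requires setting up a graph-of-groups bending and tracking coorientations carefully. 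Indeed, if you bend both components with the coorientation lifted from $\Sigma\subset M$, the resulting representation is just the restriction to $\pi_1(\widetilde M)$ of the HNN bending of $M$ along the non-separating $\Sigma$, which by Theorem~\ref{examples} can have bent cusps—so the naive extension actually fails to give strict convexity.

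The paper sidesteps this entirely by citing Long--Reid \cite[\S 3]{LongReid01} rather than a homological covering trick. The point is to enlarge $\hat\Lambda$ from $\PSO(Q_d;d-1,1)\cap\hat\Gamma$ to $\PO(Q_d;d-1,1)\cap\hat\Gamma$, i.e.\ to include the reflection across $\HH^{d-1}_0$. This reflection lies in $\hat\Gamma$, so $\hat M$ carries an isometric involution with fixed set containing the (now non-orientable) immersed $\hat\Sigma$; Long--Reid use this extra symmetry to produce, in a finite cover, a \emph{connected} embedded totally geodesic hypersurface that is separating. With that in hand, Corollary~\ref{c:strictconvexbending} and Theorem~\ref{t:bendingfinitevolume} finish the proof exactly as you outlined.
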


\begin{proof}
Our ultimate goal is to produce a finite volume hyperbolic $d$-manifold that contains an embedded \emph{separating} totally geodesic hypersurface. This can be done as follows. Let $\hat\Gamma=\PSO(Q_d)\cap \PSL_{d+1}(\ZZ)$. There is an obvious embedding of the group $\PO(Q_{d-1})$ (i.e. the full isometry group hyperbolic $(d-1)$-space) into the stabilizer of $\HH^{d-1}_0$ in $\PSO(Q_d)$. Let $\PO(Q_d;d-1,1)$ denote its image and let $\hat \Lambda=\PO(Q_d; d-1,1)\cap \hat \Gamma$.  It is easy to see that the orientable hyperbolic $d$-orbifold $\hat M:=\HH^{d}/\hat \Gamma$  contains an immersed totally geodesic copy of the non-orientalbe hyperbolic $(d-1)$-orbifold $\hat \Sigma:=\HH^{d-1}/\hat \Lambda$.  

By work of Long--Reid \cite[\S 3]{LongReid01} it is possible find finite sheeted covers $M$ of $\hat M$ and $\Sigma$ of $\hat \Sigma$ as well as a totally geodesic embedding $\Sigma\hookrightarrow M$ whose image is separating. Technically, the results in \cite{LongReid01} require $\hat M$ and $\hat \Sigma$ to be closed, however a close examination of their proof reveals that the same argument works in the case where $\hat M$ and $\hat \Sigma$ are finite volume. The result then follows by applying Corollary \ref{c:strictconvexbending} and Theorem \ref{t:bendingfinitevolume}.
\end{proof}

\subsection{Proof of Theorem \ref{maintheorem}}

We close this section by proving Theorem \ref{maintheorem}. In order to do this we need a few preliminary results.

\begin{lemma}\label{l:irreducible}
Let $\O_t$ be a properly convex domain obtained by bending $M$ along $\Sigma$. Then $\O_t$ is irreducible.
\end{lemma}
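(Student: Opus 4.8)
The plan is to derive a contradiction from a hypothetical nontrivial product decomposition of $\O_t$ by exhibiting a subgroup of $\G_t$ all of whose finite-index subgroups act irreducibly on $\R^{d+1}$.

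First I would assume $\O_t\cong\O^{(1)}\otimes\cdots\otimes\O^{(k)}$ with $k\geq 2$, corresponding to a direct sum $\R^{d+1}=W_1\oplus\cdots\oplus W_k$ into nonzero subspaces, and invoke the standard fact that the cone over a properly convex domain decomposes canonically (uniquely up to reordering) as a direct sum of indecomposable cones, so that any linear automorphism of the cone permutes the summands (see, e.g., \cite{CD1}). It follows that $\PGL(\O_t)$, hence $\G_t$, permutes $W_1,\dots,W_k$, so some finite-index subgroup of $\G_t$ fixes each $W_i$, and in particular fixes the proper nonzero subspace $W_1$.

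Next I would take $\Lambda\leq\G_t$ to be $\rho_t(\G_1)$ if $\Sigma$ is separating and $\rho_t(\G_\Sigma)$ if $\Sigma$ is non-separating. By the construction of the bending deformation, $\rho_t$ restricted to $\G_1$ (resp.\ to $\G_\Sigma$) coincides with $\rho_0$ restricted, so $\Lambda\subset\PSO(Q_d)$. The key step is to show $\Lambda$ is Zariski dense in $\PSO(Q_d)$. It is a non-elementary discrete subgroup, since $M_1$ (resp.\ $M\smallsetminus\Sigma$) is a finite-volume hyperbolic $d$-manifold with totally geodesic boundary; it contains $\rho_0(\Delta)$, a lattice in $\PSO(Q_d;d-1,1)$, which preserves $\HH^{d-1}_0$ and, being Zariski dense there, no other positive-dimensional totally geodesic subspace of $\HH^d$; and, because $\Sigma$ is embedded, $\rho_0(\Delta)$ is exactly the stabilizer of $\HH^{d-1}_0$ in $\Lambda$, a proper subgroup (otherwise $\overline{M_1}$, resp.\ $\overline{M\smallsetminus\Sigma}$, would be a collar on $\Sigma$, forcing $M$ to have geodesic boundary). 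Hence $\Lambda$ moves $\HH^{d-1}_0$ off itself and preserves no proper totally geodesic subspace of $\HH^d$, so it is Zariski dense in $\PSO(Q_d)$.

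Finally I would conclude: $\PSO(Q_d)$ is Zariski connected, so every finite-index subgroup of $\Lambda$ is again Zariski dense in it, and since the standard $(d+1)$-dimensional representation of $\PSO(Q_d)$ is irreducible, no finite-index subgroup of $\Lambda$ can fix a proper nonzero subspace of $\R^{d+1}$ --- contradicting the conclusion of the second step. Therefore $\O_t$ is irreducible. The main obstacle is the Zariski density of $\Lambda$: it has infinite covolume in $\PSO(Q_d)$, so Borel's density theorem is unavailable and one argues instead via the absence of an invariant proper totally geodesic subspace. (For $t\neq 0$ one could instead cite Theorem \ref{zariskidense}, which gives Zariski density of $\G_t$ in the Zariski-connected group $\PGL_{d+1}(\R)$, whence all its finite-index subgroups act irreducibly on $\R^{d+1}$; for $t=0$ one notes $\O_0=\HH^d$ is strictly convex, hence irreducible.)
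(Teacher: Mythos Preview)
Your argument is correct and follows the same overall strategy as the paper: isolate the subgroup $\Lambda=\rho_t(\Gamma_1)$ (resp.\ $\rho_t(\Gamma_\Sigma)$), note that bending leaves its image unchanged (hence in $\PSO(Q_d)$), show it acts strongly irreducibly on $\R^{d+1}$, and conclude that a nontrivial product decomposition of $\O_t$---which would force some finite-index subgroup of $\G_t$ to act reducibly---is impossible. The only substantive difference is in how strong irreducibility of $\Lambda$ is established. The paper does this in one line: the limit set of $\Lambda$ is not contained in any hyperplane of $\partial\HH^d$ (it contains $\partial\HH^{d-1}_0$ together with a nontrivial $\Lambda$-translate of it), and finite-index subgroups share the same limit set. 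You instead argue that $\Lambda$ preserves no proper totally geodesic subspace and hence is Zariski dense in the connected group $\PSO(Q_d)$; this is valid but longer, and implicitly appeals to the classification of proper closed subgroups of $\PSO(d,1)$. Either route gives the same conclusion.

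One caution regarding your parenthetical shortcut: in the paper's logical order, Theorem~\ref{zariskidense} is proved \emph{after} this lemma, via Lemma~\ref{detail_adh_zar}, whose hypothesis is precisely the strong irreducibility of $\G_t$ established here. So citing Theorem~\ref{zariskidense} to prove irreducibility of $\O_t$ would be circular.
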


\begin{proof}
Let $\O$ be one of the domains constructed using Theorem \ref{examples} or Theorem \ref{examples_strict_convex}, using a totally geodesic hypersurface $\Sigma$. By construction, those groups contains the fundamental $\pi_1(M_{\Sigma})$ of one of the connected component of $M\smallsetminus \Sigma$, but the group $\pi_1(M_{\Sigma})$ is changed during the bending by a conjugation, and the group $\pi_1(M_{\Sigma})$ acts strongly irreducibly on $\R^{d+1}$ at time $t=0$, since its limit set is not included in an hyperplane of $\partial \Hb^d$, so it acts strongly irreducibly at any time. Hence, $\G$ acts strongly irreducibly on $\R^{d+1}$. Thus $\O$ is irreducible, since any decomposition of $\O$ as a non-trivial product would imply the existence of a finite index non-irreducible subgroup of $\G$.
\end{proof}

We now turn our attention to the proof of Theorem \ref{zariskidense} which we will need in order to prove that the domains $\O$ constructed by bending are non-homogeneous. To complete the proof we use a Theorem of Benoist \cite{AutConvBenoist}. In fact, we need a small improvement, given by Lemma 7.14 of  \cite{cox_gr_hil_geom}.

\begin{lemma}\label{detail_adh_zar}
Let $\G$ be a strongly irreducible subgroup of $\mathrm{PGL}_{d+1}(\R)$ preserving a properly convex open set. Let $G$ be the connected component of the Zariski closure of $\G$. Suppose there exists a point $x$ in the limit set, $\Lambda_G$, of $G$ and a Zariski closed subgroup $H$ of $G$ such that the orbit $H \cdot x$ is a sub-manifold of $\PP^d$ of dimension at least $d-1$. Then $G$ is conjugate to $\mathrm{PSO}_{d,1}(\R)$ or $G=\mathrm{PGL}_{d+1}(\R)$.
\end{lemma}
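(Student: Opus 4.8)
The statement is essentially Lemma 7.14 of \cite{cox_gr_hil_geom}, itself a refinement of Benoist's work \cite{AutConvBenoist} on the structure of Zariski closures of groups preserving properly convex sets, so the proof I would give is a reduction to those results together with a dimension-counting argument on orbits in $\partial\O$. The plan is to first recall Benoist's classification: if $\G$ is strongly irreducible and preserves a properly convex open set $\O$, then the connected component $G$ of its Zariski closure is a reductive group acting irreducibly on $\R^{d+1}$, and $G$ itself preserves a properly convex open set (the interior of the closed convex hull of the limit set $\Lambda_G$, which is nonempty and $G$-invariant). Moreover $\Lambda_G\subset\partial\O_G$ where $\O_G$ is this $G$-invariant properly convex set, and $G$ acts on $\O_G$ cocompactly-up-to-the-boundary in the sense that $\Lambda_G$ is the unique closed $G$-invariant subset of $\partial\O_G$.

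**Key steps.** First I would fix the point $x\in\Lambda_G$ and the Zariski-closed subgroup $H\leq G$ with $\dim(H\cdot x)\geq d-1$. Since $H\cdot x\subset\Lambda_G\subset\partial\O_G$ and $\partial\O_G$ has dimension $d-1$, the orbit $H\cdot x$ is an open subset of $\partial\O_G$ (it is a submanifold of dimension $\geq d-1$ inside a $(d-1)$-manifold, hence of dimension exactly $d-1$ and relatively open). Taking closures, $\overline{H\cdot x}$ is a closed $H$-invariant — hence, after noting $G$ is generated by such closures under the $G$-action, one massages this into a closed $G$-invariant subset of $\partial\O_G$ with nonempty interior; by minimality of $\Lambda_G$ this forces $\Lambda_G=\partial\O_G$. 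So $\O_G$ is a properly convex open set whose \emph{entire} boundary is a single $G$-orbit closure, i.e. $G$ acts transitively on $\partial\O_G$ up to closure, and in particular $\partial\O_G$ is a homogeneous hypersurface. Second, I would invoke the classification of homogeneous properly convex domains (Vinberg, or the relevant statement in \cite{AutConvBenoist}): an irreducible properly convex open set on whose boundary the automorphism group acts transitively, with $G$ semisimple acting irreducibly, must be an ellipsoid, forcing $G$ to be conjugate into $\PSO_{d,1}(\R)$; the only other possibility, when $H\cdot x$ fills out an open set but $G$ is all of $\PGL_{d+1}(\R)$ (which does not preserve any properly convex set but can occur as the Zariski closure when $\G$ itself is Zariski dense and one drops the convexity bookkeeping), is recorded separately. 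Finally I would reconcile the two cases: either the convex-hull construction produces a genuine $G$-invariant properly convex $\O_G$, yielding $G$ conjugate to $\PSO_{d,1}(\R)$, or the convexity degenerates, which by Benoist forces $G=\PGL_{d+1}(\R)$.

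**Main obstacle.** The delicate point is passing from "$H\cdot x$ is a submanifold of dimension $\geq d-1$" to "$\Lambda_G$ is all of $\partial\O_G$ and this domain is homogeneous." One must be careful that $H\cdot x$ could a priori be a $(d-1)$-submanifold that is \emph{not} relatively open in $\partial\O_G$ if $\partial\O_G$ had lower-dimensional strata or if $\O_G$ were not full-dimensional — so I would first establish that $\O_G$ is open and nonempty (this is where strong irreducibility of $\G$, hence of $G$, is essential: an irreducible group preserving a properly convex set preserves one with nonempty interior), and that $\partial\O_G$ is a topological $(d-1)$-sphere, so that a $(d-1)$-dimensional submanifold sitting inside it is automatically open. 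The second subtlety is the invocation of the homogeneous-domain classification: I would need the precise form that handles the \emph{irreducible} case and concludes ellipsoid, citing Benoist's \cite{AutConvBenoist} (and Lemma 7.14 of \cite{cox_gr_hil_geom} for the exact packaging), rather than reproving Vinberg's theory. Modulo these citations the argument is short; the real content is entirely borrowed from \cite{AutConvBenoist} and \cite{cox_gr_hil_geom}, and the lemma should be presented as a direct consequence.
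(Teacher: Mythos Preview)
The paper does not prove this lemma: it is quoted verbatim as Lemma~7.14 of \cite{cox_gr_hil_geom} (building on Benoist \cite{AutConvBenoist}) and used as a black box, exactly as you recommend in your final sentence. So at the level of what the paper actually does, your proposal matches.

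Your sketch of the underlying argument, however, contains a real gap worth flagging. You begin by asserting that $G$ itself preserves a properly convex open set $\O_G$ (the interior of the convex hull of $\Lambda_G$), and then run the dimension argument inside the $(d-1)$-sphere $\partial\O_G$. But one of the two allowed conclusions of the lemma is $G=\PGL_{d+1}(\R)$, which preserves no properly convex set whatsoever --- and this is precisely the case the paper needs for Theorem~\ref{zariskidense}, where the bent groups $\Gamma_t$ preserve $\O_t$ yet are Zariski dense in $\PGL_{d+1}(\R)$. So the passage from ``$\Gamma$ preserves a properly convex set'' to ``its Zariski closure $G$ does too'' is false in general, and the ambient hypersurface $\partial\O_G$ on which your openness argument relies need not exist. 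You gesture at this in your last paragraph (``or the convexity degenerates''), but by then the main argument has already assumed $\O_G$ exists; the dichotomy has to be handled up front, not as an afterthought.
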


\begin{proof}[Proof of Theorem \ref{zariskidense}]
In order to apply Lemma \ref{detail_adh_zar}, we just need to set $H$ to be the Zariski closure of one of the peripheral subgroups of $\G$. The group $P_d$ is Zariski closed, and so if the cusp is standard then by Theorem \ref{t:holo_end_standorbend} we can assume (after conjugating) that $H=P_d$. However, the group $B_d$ is not Zariski closed (since it contains entries with the transcendental function $e^t$). Therefore, using a similar argument we find that $H$ is $d$ dimensional and consists of matrices of the form 
$$
\begin{pmatrix}
1 & 0 & v^t & u\\
0 & w & 0 & 0\\
0 & 0 & I & v\\
0 & 0 & 0 & 1 
\end{pmatrix}
$$

where $v\in \R^{d-2}$, $I$ is the $(d-2)\times (d-2)$ identity matrix, and $u,w\in \R$.   Thanks to the analysis  of Section \ref{s:generalizedcusps}, we see that generically, the orbits of $H$ contain horospheres of the type introduced in \ref{s:generalizedcusps}, and hence these orbits is at least dimension $d-1$. Moreover, for any $x \in \R\Pb^d$ in the complement of a particular hyperplane the $H$-orbit of $x$ contains a horosphere. So, one can find a point of $\Lambda_G$ whose $H$-orbit is at least of dimension $d-1$.

Thus, Lemma \ref{detail_adh_zar} shows that the Zariski-closure of $\G$ is either $\mathrm{PSO}_{d,1}(\R)$ or $G=\mathrm{PGL}_{d+1}(\R)$. If $t \neq 0$ then it cannot be $\mathrm{PSO}_{d,1}(\R)$, since the matrix $c_t$ (introduced in equation \ref{e:c_t}) does not normalize  $\mathrm{PSO}_{d,1}(\R)$, for $t \neq 0$.

\end{proof}

Finally, we prove that $\O$ is not homogeneous.

\begin{lemma}\label{l:nonhomogeneous}
For $t\neq 0$ the domains $\O_t$ constructed by bending $M$ along $\Sigma$ are non-homogeneous
\end{lemma}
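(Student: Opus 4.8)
The plan is to combine Theorem \ref{zariskidense} with Benoist's characterization of homogeneous properly convex divisible/quasi-divisible domains. Recall that a homogeneous properly convex domain $\O$ has an automorphism group $\PGL(\O)$ acting transitively, and if moreover $\O$ is quasi-divisible and irreducible, then (by the classification of homogeneous convex cones / symmetric properly convex domains, together with Vey's theorem) $\PGL(\O)$ is a reductive Lie group acting transitively, and $\O$ is a symmetric space. In particular $\PGL(\O)$ is then a self-normalizing algebraic subgroup of $\PGL_{d+1}(\R)$ and the Zariski closure of any group $\G\subset \PGL(\O)$ preserving $\O$ is contained in $\PGL(\O)$. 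So first I would record that for $t\neq 0$ the domain $\O_t$ is irreducible (Lemma \ref{l:irreducible}) and that $\G_t$ quasi-divides $\O_t$ (Theorem \ref{t:bendingfinitevolume}), hence if $\O_t$ were homogeneous its automorphism group would be a proper reductive algebraic subgroup of $\PGL_{d+1}(\R)$ acting transitively on $\O_t$.

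The key step is then a Zariski-density contradiction. Since $\G_t \subset \PGL(\O_t)$, we have $\overline{\G_t}^{\,Zar} \subseteq \overline{\PGL(\O_t)}^{\,Zar} = \PGL(\O_t)$, the latter being algebraic. By Theorem \ref{zariskidense}, $\overline{\G_t}^{\,Zar} = \PGL_{d+1}(\R)$ for $t\neq 0$ (this is exactly the conclusion of the proof of Theorem \ref{zariskidense}, modulo ruling out $\PSO_{d,1}(\R)$, which is done there). Therefore $\PGL(\O_t) = \PGL_{d+1}(\R)$. But $\PGL_{d+1}(\R)$ does not preserve any properly convex open set (it does not even preserve a proper closed subset of $\RP^d$ other than $\varnothing$ and $\RP^d$), contradicting proper convexity of $\O_t$. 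Hence $\O_t$ is not homogeneous.

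An alternative, more self-contained route avoids quoting the classification of homogeneous domains: if $\O_t$ is homogeneous then $\PGL(\O_t)$ acts transitively on $\O_t$, so $\PGL(\O_t)$ has an open orbit in $\RP^d$ (namely $\O_t$), whence $\dim \PGL(\O_t) \geq d$ and in fact $\PGL(\O_t)$ acts with an open orbit. The identity component of $\PGL(\O_t)$ is then a connected Lie subgroup with a $d$-dimensional orbit. Now $\G_t$ preserves $\O_t$ hence normalizes nothing extra, but the point is simply that $\G_t \subseteq \PGL(\O_t) \subsetneq \PGL_{d+1}(\R)$ (the inclusion is proper since $\PGL_{d+1}(\R)$ preserves no properly convex set), and $\PGL(\O_t)$ is a closed, hence algebraic, subgroup; this again contradicts $\overline{\G_t}^{\,Zar} = \PGL_{d+1}(\R)$ from Theorem \ref{zariskidense}. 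I would present this shorter version.

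The main obstacle is the bookkeeping in the first route: verifying that $\PGL(\O)$ is a (Zariski-)closed subgroup of $\PGL_{d+1}(\R)$ — which is standard, as it is the stabilizer of the closed convex set $\overline{\O}$ and such stabilizers are closed — and confirming that $\PGL_{d+1}(\R)$ fixes no properly convex open set, which is immediate since a properly convex open set is disjoint from some hyperplane and $\PGL_{d+1}(\R)$ acts transitively on hyperplanes. With those two elementary facts in hand, the argument is just: homogeneity forces $\PGL(\O_t)$ to be a proper algebraic subgroup containing $\G_t$, while Theorem \ref{zariskidense} forces $\G_t$ to be Zariski dense, a contradiction for $t \neq 0$.
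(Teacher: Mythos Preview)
Your overall strategy --- use Theorem \ref{zariskidense} to derive a contradiction from $\G_t \subset \PGL(\O_t) \subsetneq \PGL_{d+1}(\R)$ --- is sound, but your preferred ``self-contained'' Route 2 contains a genuine gap: you assert that $\PGL(\O_t)$ is ``closed, hence algebraic,'' and justify this by saying it is the stabilizer of the closed set $\overline{\O_t}$. Being the stabilizer of a closed set makes $\PGL(\O_t)$ closed in the \emph{usual} topology, not Zariski-closed. Closed Lie subgroups of $\PGL_{d+1}(\R)$ need not be algebraic (one-parameter subgroups with irrational eigenvalue ratios give easy counterexamples). Without algebraicity of $\PGL(\O_t)$, you cannot conclude that the Zariski closure of $\G_t$ stays inside $\PGL(\O_t)$, and the contradiction evaporates. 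Your Route 1 patches this by invoking the classification of homogeneous properly convex domains (Vinberg--Koecher, Vey), which does give that $\PGL(\O_t)$ is a reductive real algebraic group; that argument is correct but imports substantial external machinery.

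The paper's proof sidesteps the algebraicity question entirely by proving something stronger: $\PGL(\O_t)$ is \emph{discrete}. The argument is short. Since $\G_t$ is Zariski-dense (Theorem \ref{zariskidense}) and $\G_t \subset \PGL(\O_t)$, the group $\PGL(\O_t)$ is also Zariski-dense in $\PGL_{d+1}(\R)$. Now use the elementary dichotomy: a Zariski-dense subgroup $\Lambda$ of an almost simple Lie group is either discrete or (topologically) dense, because the usual closure $\overline{\Lambda}$ is normalized by $\Lambda$ and hence by its Zariski closure, so $\overline{\Lambda}$ is normal; its identity component is therefore either trivial or everything. Since $\PGL(\O_t)$ preserves $\O_t$ it cannot be dense, so it is discrete --- and a discrete group cannot act transitively on a domain of positive dimension. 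This avoids both the classification and any claim about algebraicity of $\PGL(\O_t)$, and as a bonus yields that $\G_t$ has finite index in $\PGL(\O_t)$.
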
 

\begin{proof}
In order to prove the result, we show that $\mathrm{PGL}(\O_t)$ is a discrete subgroup of $\mathrm{PGL}_{d+1}(\R)$. The group $\G_t$ is Zariski-dense, so the group $\mathrm{PGL}(\O_t)$ is also Zariski-dense.

First, we stress that a Zariski-dense subgroup, $\Lambda$, of an almost simple Lie group, i.e. a Lie group with a simple Lie algebra, is either discrete or dense, since the closure of $\Lambda$ for the usual topology is normalized by $\Lambda$, and so normalized by its Zariski-closure.

Now, the group $\mathrm{PGL}(\O_t)$ is not dense in $\mathrm{PGL}_{d+1}(\R)$ since it preserves the convex $\O_t$. Hence, the group $\mathrm{PGL}(\O_t)$ is discrete. 
\end{proof}

\begin{remark}
One consequence of the proof of Lemma \ref{l:nonhomogeneous} is that the index of $\G_t$ in $\mathrm{PGL}(\O_t)$ is finite since the quotient of $\O_t$ by both groups is of finite volume.
\end{remark}

\begin{proof}[Proof of Theorem \ref{maintheorem}]
By Theorems \ref{examples} and \ref{examples_strict_convex} we can find examples of strictly convex and non-strictly convex properly convex $\O$ via bending. By Theorem \ref{t:bendingfinitevolume} we see that these $\O$ are quasi-divisible.

By Lemma \ref{l:irreducible} we see that these $\O$ are always irreducible. Finally, by Lemma \ref{l:nonhomogeneous} we see that these $\O$ are always non-homogenous. 
\end{proof}

\bibliographystyle{alpha}

\end{document}